\documentclass[12pt,reqno]{amsart}
\setlength{\textwidth}{7.0 in}
\setlength{\textheight}{8.6 in}
\hoffset=-64pt
\voffset=-24pt
\usepackage{amsmath, amsfonts, amssymb}
\usepackage{latexsym}
\usepackage{enumerate}
\usepackage{cite}
\usepackage{bm}
\usepackage{mathtools}
\usepackage{eucal}
\usepackage{mathrsfs}
\usepackage[usenames]{color}
\usepackage{xcolor}
\definecolor{refkey}{gray}{.45}
\definecolor{labelkey}{gray}{.45}
\usepackage[]{hyperref}

\numberwithin{equation}{section}

  \newcommand{\R}{\mathbb{R}} 
  
  \newcommand{\e}{\varepsilon} 
  \newcommand{\h}{H^{\perp}} 
  

  \newcommand{\s}{\mathbb{S}}

  \newtheorem{lemma}{Lemma}[section]
  \newtheorem{theorem}[lemma]{Theorem}

  \newtheorem{defi}[lemma]{Definition}
  \newtheorem{coro}[lemma]{Corollary}
  
  \newtheorem{prop}[lemma]{Proposition}
  \newtheorem{remark}[lemma]{Remark}
  \newcommand{\blb}{\raise.3ex\hbox{$\scriptstyle \pmb \lbrack$}}
  \newcommand{\sblb}{\raise.1ex\hbox{$\scriptscriptstyle \pmb \lbrack$}}
  \newcommand{\brb}{\raise.3ex\hbox{$\scriptstyle \pmb \rbrack$}}
  \newcommand{\sbrb}{\raise.1ex\hbox{$\scriptscriptstyle \pmb \rbrack$}}
  \newcommand{\bla}{\raise.2ex\hbox{$\scriptstyle\pmb \langle$}}
  \newcommand{\sbla}{\raise.1ex\hbox{$\scriptscriptstyle\pmb \langle$}}
  \newcommand{\bra}{\raise.2ex\hbox{$\scriptstyle\pmb \rangle$}}
  \newcommand{\sbra}{\raise.1ex\hbox{$\scriptscriptstyle\pmb \rangle$}}
  \newcommand{\blrb}{\raise.3ex\hbox{$\scriptstyle \pmb | $}}
  \newcommand{\sblrb}{\raise.1ex\hbox{$\scriptscriptstyle \pmb | $}}

  \newcommand{\Q}{\mathbb{Q}} 
  \newcommand{\avg}{M_{p,\alpha}^{(t,\lambda)}}
  \newcommand{\avgb}{M_{p,\beta}^{(t,\lambda)}}
  \newcommand{\avgc}{M_{p,\gamma}^{(t,\lambda)}}
  
  \newcommand{\lpcurv}{(1-t) \times_{p,\alpha} A +_{p,\alpha} t \times_{p,\alpha} B} 
  \newcommand{\lpmin}{(1-t) \times^{p,\alpha} A +^{p,\alpha} t \times^{p,\alpha} B}

    \author[Michael Roysdon]{Michael Roysdon}
  \address[Michael Roysdon]{
  	School of Mathematical Sciences, Tel Aviv University, Israel}
 \email{michaelroy@tauex.tau.ac.il}

\author[Sudan Xing]{Sudan Xing}
\address[Sudan Xing]{
	Department of Mathematical and Statistical Sciences, University of Alberta, Canada}
\email{sxing@ualberta.ca}
  
  \title{On Multiple $L_p$-curvilinear-Brunn-Minkowski inequalities}
  \begin{document}
  	\allowdisplaybreaks[4]
  	\maketitle

  	\begin{abstract}
 We construct the extension of the curvilinear summation for bounded Borel measurable sets to the $L_p$ space for multiple power parameter $\bar{\alpha}=(\alpha_1, \cdots, \alpha_{n+1})$ when $p>0$. Based on this $L_{p,\bar{\alpha}}$-curvilinear summation for sets and  concept of  {\it compression} of sets, the $L_{p,\bar{\alpha}}$-curvilinear-Brunn-Minkowski inequality for bounded Borel measurable sets and its normalized version are established. Furthermore, by utilizing the hypo-graphs for functions, we enact a brand new proof of $L_{p,\bar{\alpha}}$ Borell-Brascamp-Lieb inequality, as well as its normalized version,  for functions  containing the special case of $L_{p}$ Borell-Brascamp-Lieb inequality through the $L_{p,\bar{\alpha}}$-curvilinear-Brunn-Minkowski inequality  for sets. Moreover, we propose the multiple power $L_{p,\bar{\alpha}}$-supremal-convolution for two functions together with its properties. Last but not least, we introduce the definition of the surface area originated from the variation formula of measure in terms of the $L_{p,\bar{\alpha}}$-curvilinear summation for sets as well as  $L_{p,\bar{\alpha}}$-supremal-convolution for functions together with their corresponding  Minkowski type inequalities and isoperimetric inequalities for $p\geq1,$ etc. 
  	\end{abstract}
  	
  	\section{Introduction}
  	
  	By $\R^n$ we denote the $n$-dimensional Euclidean space with its usual inner product $\langle \cdot, \cdot \rangle$ and Euclidean norm $\|\cdot\|$. For a measurable set $A \subset \R^n$, we denote by $V_n(A)$, the volume (Lebesgue measure) of the set $A$. By $\chi_A(x)=\begin{cases}
  	1, &\text{if } x\in A,  \\
  	0, &\text{if } x\notin A,
  	\end{cases}$ we denote the characteristic function of $A$. For a subspace  \(H\in G_{n, k}\)---the
  	$k$-dimensional  Grassmannian manifold on \(\mathbb{R}^{n}\)  equipped with the Haar probability measure $\nu_{n,k}$,  its orthogonal complement $H^{\perp}\in G_{n, n-k}$ for $k\in\{0,1,\cdots,n\}.$ 
  	
  	Denote for $a, b\geq0$, $\alpha\in[-\infty, \infty]$ and $t\in[0,1]$, the $\alpha$-mean of $a, b$ as
  	\begin{equation*}
  	\begin{split}
  	M_{\alpha}^{t}(a,b) = \begin{cases}
  	\left[(1-t) a^{\alpha} + tb^{\alpha}\right]^{\frac{1}{\alpha}}, &\text{if } \alpha \neq 0, \pm \infty,\\
  	a^{1-t}b^{t}, &\text{if } \alpha = 0,\\
  	\max\{a,b\}, &\text{if } \alpha =+\infty,\\
  	\min\{a,b\}, &\text{if } \alpha = - \infty,
  	\end{cases}
  	\end{split}
  	\end{equation*}
  	if $ab>0$, 
  	and $M_{\alpha}^{t}(a,b)=0$ if $ab=0.$ 
   For Borel subsets $A,B \subset \R^n$ and $t \in (0,1)$, the Minkowski convex combination of $A$ and $B$ is the vector summation
  	\begin{eqnarray*}
  	(1-t)A + t B 
  		&=& \{(1-t)x+ty:x \in A, y \in B\}\\
  		&=&\{\big(M_1^t(x_1, y_1),\cdots,M_1^t(x_n, y_n)\big) :x=(x_1,\cdots, x_n) \in A, y=(y_1,\cdots, y_n) \in B\}. 
  	\end{eqnarray*}
  	The famous Brunn-Minkowski inequality in $\R^n$ asserts that for any measurable sets $A$ and $B$,
  	\begin{equation}\label{e:BM}
  	V_n((1-t)A+tB)\geq M_{1/n}^t (V_n(A), V_n(B)),
  	\end{equation}
  	with equality holds if and only if $A$ and $B$
  	are homothetic. Moreover, the Minkowski's first inequality associated to the variation for the volume in terms of Minkowski summation for convex bodies $A, B\in \mathcal{K}_{(o)}^n$ (convex, compact subsets in $\R^n$ with the origin $o$ in their interiors) states that
  	\begin{equation*}\label{variationconvexbody}
  	V(A,B):= \frac{1}{n} \cdot \frac{d}{d\e}V_n(A+\e B) \big\vert_{\e=0}= \frac{1}{n} \int_{\s^{n-1}} h_B(u)dS(A,u)\geq V_n(A)^{1-\frac{1}{n}}V_n(B)^{\frac{1}{n}}, 
  	\end{equation*}
  	where $S(A,\cdot)$ stands for the  surface area measure for $A$, and 
$h_{B}(u) = \max_{y \in B} \langle u,y \rangle$ for $u\in S^{n-1}$ denotes the support function of $B$ defined on $S^{n-1}$.   

For a comprehensive study of the Brunn-Minkowski inequality and its myriad applications, see the books \cite{AGM,Kold, Sh1} and the survey \cite{Gar1}, etc. 
The recent years have witnessed various directions of generalization for the Brunn-Minkowski theory, particularly,  the extension of the Brunn-Minkowski theory to the field of general measures \cite{Bobkov, BORELL, CLM, EM, GZ, KlartagMilman, KL,  Liv, LMNZ, arno1, L, P, JesusManuel, Uhrin1, Uhrin} and  functional space \cite{AFO, Ball, BCF, BrascampLieb, Co2, ColesantiFragala, Klartag, RS, liran, Rotem, Rotem2}, etc. 
  	
  	Recall that a non-negative Borel measure $\mu$ on $\R^n$ is  $\alpha$-concave for  $\alpha \in [-\infty,\infty]$ if for each $t \in (0,1)$ and all pairs of Borel sets $A,B \subset \R^n$, 
  	\begin{equation*}\label{e:alphaconcave}
  	\begin{split}
  	\mu((1-t)A+tB) &\geq 
  	\begin{cases}
  	M_{\alpha}^{t}(\mu(A), \mu(B)), &\text{if } \alpha \neq 0, \pm \infty, \\
  	\mu(A)^{1-t}\mu(B)^t, &\text{if } \alpha =0, \\
  	\max\{\mu(A),\mu(B)\}, &\text{if } \alpha =+\infty,\\
  	\min\{\mu(A),\mu(B)\}, &\text{if } \alpha =-\infty,
  	\end{cases}
  	\end{split}
  	\end{equation*}
  	whenever $\mu(A)\mu(B) > 0$, and the averages on the right-hand side are denoted as zero otherwise. The case $\alpha = 0$  is known as a $\log$-concave measure, and the case $\alpha = - \infty$ is the so-called quasi-concave measure.
  	The definition of $\alpha$-concave function  $f \colon \R^n \to \R_+$ is defined in a similar way for $\alpha\in[-\infty, \infty]$.
  	
  	The commonly used bridge connects  functional analysis with convex geometry, are the typical sets uniquely determined by the functions,  for example, the super-level of functions, Ball's body defined by functions \cite{AGM}, revolution bodies for functions \cite{AKM, Klartag,RX}, or graphs (Epigraph or Subgraph \cite{RX2}) of functions, etc. In this paper, we focus on the hypo-graph of a function $f\colon\R^n\ \to \R_+=[0,\infty)$, the area between the curve $y=f(x)$ and the $x$-axis, 
  	\[
  	\text{hyp}(f): = \{(x,r) \in \R^n \times \R_+ \colon 0 \leq r \leq f(x) \}.
  	\]
  	Obviously if $f$ is compactly supported, $\text{hyp}(f)$ is a bounded set in $\R^{n+1}.$

  	For general sets in $\R^{n+1}$, particularly,	
  	bounded Borel sets $A,B \subset \R^n \times \R_+$, Uhrin proposed the definition of the {\it curvilinear convex combination} of  $A$ and $B$ in \cite{Uhrin},   for $t \in (0,1)$, and $\alpha \in (-\infty,\infty)$, 
  	\begin{eqnarray}\label{e:uhrinoperation}
 && \!\!\!\!\!\!\!\!\!\!\!\!\!\!\!\!\!\!\!\!\!(1-t) \times_{\alpha} A +_{\alpha} t \times_{\alpha} B \\
 &:=& \left\{\left((1-t)x+ty, M_{\alpha}^t(a, b)\right) \colon (x,a) \in A, (y,b) \in B \right\}\\
  	&=&\left\{\left((1-t)(A\cap \mathbb{R}^{n})+t(B\cap \mathbb{R}^{n}), M_{\alpha}^t(a, b)\right) \colon (x,a) \in A, (y,b) \in B \right\}\label{curvisum}\\
  	&=&\left\{\left(M_1^t(x_1, y_1),\cdots,M_1^t(x_n, y_n), M_{\alpha}^t(a, b)\right) \colon (x_1,\cdots, x_n,a) \in A, (y_1,\cdots, y_n,b) \in B \right\}
  	\end{eqnarray}
  	for $ \alpha \neq 0$, and where $M_{\alpha}^t(a,b)$ for the cases $\alpha = 0,\pm \infty$ are  interpreted via continuity as $a^{1-t}b^t$, $\max\{a,b\}$, and $\min\{a,b\}$, respectively.
  	When $\alpha = 1$, it recovers the usual Minkowski convex combination in $\R^n \times \R_+$.

  	To prove the curvilinear extension for Brunn-Minkowski-Lusternik inequality for general bounded Borel measurable sets, 
  	Uhrin \cite{Uhrin2} applied the  concept of {\it compression} for a set  $A \subset \R^n \times \R_+$ which is also known as {\it shaking} originally introduced by Blaschke \cite{Blaschke, CCG} of positive measure as
  	\[
  	\tilde{A} = \text{hyp}(V_A), 
  	\]
  	where $
  	V_A(z)= V_1(A \cap (\R_+ + z)),  A \cap (z + \R_+) \neq \emptyset, z\in \R^n. 
  $
  	In another way, the compression process of a set can be viewed as taking the hypo-graph of  set segment function  defined on $\R^n$. 
  	Based on the definitions of curvilinear summation and compression for sets in $\R^{n+1}$,  Uhrin in  \cite{Uhrin} established,  a Brunn-Minkowski  type inequality for sets in $\R^{n+1}$. That is, 
  	{\it
  	for $\alpha \in (-\infty,\infty)$ and any bounded Borel subsets $A,B \subset \R^n \times \R_+$, each having finite positive volume, and $t \in (0,1)$. Suppose that  $
  		A = \tilde{A}$ and $B=\tilde{B}.
  		$
  		Then one has 
  		\begin{equation}\label{e:BMUhrin}
  		\begin{split}
  		\!\!\!\!\!\!\!V_{n+1}((1-t) \times_{\alpha} A +_{\alpha} t\times_{\alpha} B) \geq \begin{cases} M_{\frac{\alpha}{1 +n\alpha}}^t(V_{n+1}(A), V_{n+1}(B)), &\text{if } \alpha \geq - \frac{1}{n},\\
  		\min\left\{(1-t)^{\frac{1 +n\alpha}{\alpha}}V_{n+1}(A), t^{\frac{1 +n\alpha}{\alpha}}V_{n+1}(B) \right\}, &\text{if } \alpha < - \frac{1}{n}.
  		\end{cases}
  		\end{split}
  		\end{equation}
  	}
  It is obvious that  this significant inequality recovers the classic Brunn-Minkowski inequality in $\R^{n+1}$ when $\alpha=1$.
  	
  	The connection between the {\it compression} of $n+1$-dimensional set and the hypo-graph of functions  defined on $\R^n$ has the further relationships below.
  	On one hand,
  	if  $A$ and $B$ are the hypo-graphs of functions $f: \text{supp} f \rightarrow \mathbb{R}$ and $g: \text{supp} g \rightarrow \mathbb{R}$ satisfying $\text{supp} f,  \text{supp} g$ being bounded Borel sets, then 
  	by  (\ref{e:uhrinoperation}), for any $z \in \R^n$, one can obtain that 
  	\begin{eqnarray}\label{e:funvolume}
  	V_{(1-t)supp f+t supp g}(z)
  	&=&
  	V_1([(1-t) \times_{\alpha} \text{hyp}(f) +_{\alpha} t \times_{\alpha} \text{hyp}(g)] \cap (\R_+ + z))\\
  	&=& \sup_{z = (1-t) x + ty} ((1-t)f(x)^{\alpha}+ t g(y)^{\alpha})^{\frac{1}{\alpha}}.
  	\end{eqnarray}
  	On the other hand, by choosing $f = \chi_A$ and $g = \chi_B$ for any bounded Borel sets $A,B \subset \R^n$, equation \eqref{e:funvolume} takes the form $V_{(1-t)A+tB}(z)^{\alpha}=(1-t)V_A(z)^{\alpha}+tV_B(z)^{\alpha}$ since
  	\begin{align*}
  	V_1([(1-t) \times_{\alpha} \text{hyp}(\chi_A) +_{\alpha} t \times_{\alpha} \text{hyp}(\chi_B)] \cap (\R_+ + z)) &= \sup_{z = (1-t) x + ty} ((1-t)\chi_A(x)^{\alpha}+ t \chi_B(y)^{\alpha})^{\frac{1}{\alpha}}\\
  	&= \chi_{(1-t) A + t B}(z). 
  	\end{align*}
  		Noting also that $\widetilde{\text{hyp}(f)} = \text{hyp}(f)$ and $\widetilde{\text{hyp}(g)} = \text{hyp}(g)$, integrating \eqref{e:funvolume} for $z\in\R^n$, and applying (\ref{e:BMUhrin}), one obtain the Borell-Brascamp-Lieb inequality  \cite{BORELL, BrascampLieb} for $\alpha\geq-\frac{1}{n}$ and   \cite[Lemma~3.3]{DancsUhrin} for $\alpha<-1/n$. 
  	{\it
  	Let $\alpha\in(-\infty, \infty)$, $t \in (0,1)$, and $f,g,h \colon \R^n \to \R_+$ be a triple of integrable functions (with finite support) satisfying the condition 
  		\[
  		h((1-t)x+ty) \geq M_{\alpha}^t(f(x), g(y)) 
  		\]
  		for all $x,y \in \R^n$ such that $f(x)g(y)>0$.  Then 
  		\begin{equation}\label{BBLineq}
  		\int_{\R^n} h(x) dx \geq 	\begin{cases}  M_{\frac{\alpha}{1+n\alpha}}^t\left(\int_{\R^n} f(x) dx , \int_{\R^n} g(x) dx\right), &\text{if} \ \alpha \geq -\frac{1}{n},\\
  		\min\big\{(1-t)^{\frac{1+n\alpha}{\alpha}}\int_{\R^n} f(x) dx, t^{\frac{1+n\alpha}{\alpha}}\int_{\R^n} g(x) dx\big\}, & \text{if } \ \alpha<- \frac{1}{n}.
  		\end{cases}
  		\end{equation}
  	}
  	
  	Thus far we observed that the Brunn-Minkowski theory for sets in $\R^n$ is a special case for the curvilinear-Brunn-Minkowski theory in $\R^{n+1}$, which further leads to the Borell-Brascamp-Lieb inequality for functions in $\R^n$. Moreover, various other extensions of the classical Brunn-Minkowski theory have been developed in recent years, for instance, 
  	in \cite{Firey, Sh1}, Firey's generalization of the Minkowski combination---the $L_p$-Minkowski convex combination. Let $p \in [1,\infty)$ and $t \in [0,1]$. Then, for any convex bodies $K,L \subset \mathcal{K}_{(o)}^n$, and $t 
  	\in (0,1)$, the $L_p$-Minkowski combination of $K$ and $L$ with respect to coefficients $\alpha, \beta>0$, $\alpha \cdot_p K +_p \beta\cdot_p L\in \mathcal{K}_{(o)}^n$  is the convex body whose support function satisfies
  	\begin{equation}\label{e:Lpminkowskicomb}
  	h_{(1-t) \cdot_p K +_p t\cdot_p L}(u) = M_p^t(h_K(u),  h_L(u)), \quad u \in \s^{n-1}.
  	\end{equation}
  	 Equivalently, it can be reinterpreted by  {\it quasilinearzation} method \cite{Mitri} applying the {\it $L_p$ coefficients} $(C_{p,\lambda,t}, D_{p,\lambda, t})$, i.e.,
  	\begin{equation*}\label{e:ql}
  	h_{(1-t) \cdot_p K +_p t\cdot_p L}(u) = \sup_{0 \leq \lambda \leq 1}\left[C_{p,\lambda,t}h_K(u) + D_{p,\lambda,t}h_L(u) \right], \quad u \in \s^{n-1},
  	\end{equation*}
  	where 
  	\[
  	C_{p,\lambda,t}:=(1-t)^{\frac{1}{p}}(1-\lambda)^{\frac{1}{q}}\ \text{and} \ D_{p,\lambda,t}:=t^{\frac{1}{p}}\lambda^{\frac{1}{q}}.
  	\]
  	
  	What's more, in \cite{LYZ} Lutwak, Yang, and Zhang generalized the definition \eqref{e:Lpminkowskicomb} to the collection of all Borel measurable subsets of $\R^n$ that coincides with the definition proposed by Firey if the sets involved are convex bodies containing the origin in their interiors for $p\geq1$.  Given non-empty Borel measurable subsets $A$ and $B$ of $\R^n$, and $t \in (0,1)$,
  	\begin{equation}\label{e:LYZMinkowskicombo}
  	(1-t)\cdot_p A +_p t \cdot_p B = \left\{C_{p,\lambda,t}x+D_{p,\lambda,t}y \colon x \in A, y \in B, \ 0<\lambda<1\right\},
  	\end{equation}
together with the extended  $L_p$-Brunn-Minkowski inequality for non-empty measurable sets, i.e.,
  		\begin{equation*}\label{e:LYZBM}
  		V_{n}((1-t) \cdot_p A +_p t \cdot_p B) \geq M_{\frac{p}{n}}^t(V_n(A),V_n(B)).
  		\end{equation*}
Additionally, the $L_p $ Minkowski's first inequality associated to the variation for the volume in terms of $L_p$ Minkowski summation for $A,B\in\mathcal{K}_{(o)}^n$ when $p\geq1$ states that
  	\begin{equation}\label{variationconvexbodyp}
  	V_p(A,B):= \frac{p}{n} \cdot \frac{d}{d\e}V_n(A+_p\e\cdot_p B) \Big\rvert_{\e=0}= \frac{p}{n} \int_{\s^{n-1}} h_B(u)^ph_A(u)^{1-p}dS(A,u)\geq V_n(A)^{1-\frac{p}{n}}V_n(B)^{\frac{p}{n}}.
  	\end{equation}

 On the other hand, if $p\in[0,1)$, in \cite{BLYZ-1} B\"or\"oczky, Lutwak,  Yang, and Zhang posed the corresponding $L_p$ summation as the associated Aleksandrov body 
  	\begin{equation}\label{e:lpLYZdef}
  	(1-t) \cdot_p A+_p t \cdot_p B= \bigcap_{u \in \s^{n-1}} \left\{x \in \R^n \colon \langle x,u \rangle \leq M_{p}^t(h_A(u), h_B(u))  \right\}
  	\end{equation}
  	for any convex bodies $A, B \subset \mathcal{K}^n_{(o)}$, and any $t \in (0,1)$. The definitions \eqref{e:Lpminkowskicomb} and \eqref{e:lpLYZdef} agree on the range $1\leq p \leq \infty$. Moreover, the local $L_p$-Brunn-Minkowski inequality for $p\in[0,1)$ can be seen in
  	\cite{christos, Putter, global}, etc.
  	For more advances of the $L_p$-Brunn-Minkowski theory see also \cite{BHZ, BLYZ, BLYZ-1,BLYZ-2,BHT, CLM,HKL, KolM,LYZ,Lutwak1,Lutwak2,arno,liran, christos1,PT, Stan,Stan1,XiJLeng, YgZg1,Zhu}, etc.
  	
  	In view of the $L_p$ summation for sets, we notice that the $L_p$ extension of Brunn-Minkowski theory has a close relationship with the $L_p$ coefficients ($C_{p,\lambda, t}, D_{p,\lambda, t}$).
   Recall in \cite{RX, RX2}, the $L_p$ mean for numbers (or sets) with $L_p$ coefficients below.
  	Let $p\geq 1,$ $p^{-1}+q^{-1}=1$, $\alpha \in [-\infty,\infty]$, and $t,\lambda \in (0,1)$,
  	\begin{equation*}\label{t:generalizedaverages}
  	\begin{split}
  	\avg(a,b): = \begin{cases}
  	\left[C_{p,\lambda,t} a^{\alpha} + D_{p,\lambda,t}b^{\alpha}\right]^{\frac{1}{\alpha}}, &\text{if } \alpha \neq 0, \pm \infty,\\
  	a^{C_{p,\lambda,t}}b^{D_{p,\lambda,t}}, &\text{if } \alpha = 0,\\
  	\max\{a,b\}, &\text{if } \alpha =+\infty,\\
  	\min\{a,b\}, &\text{if } \alpha = - \infty,
  	\end{cases}
  	\end{split}
  	\end{equation*}
  	if $ab >0$ and $\avg(a,b) = 0$ otherwise. With the $L_p$ mean with $L_p$ coefficients in hand, the authors  in \cite{RX, RX2} further established a functional counterpart of the $L_p$ summation as well as its corresponding $L_p$ version of the Borell-Brascamp-Lieb inequality (\ref{BBLineq}).
  	The $L_p$  Borell-Brascamp-Lieb inequality for functions states that 
  	{\it if $p \geq 1$, $p^{-1} + q^{-1}=1,$ $t \in (0,1)$, $\alpha\in (-\infty, \infty)$ and   a triple of bounded integrable functions $f,g,h \colon \R^n \to \R_+$ satisfies the condition 
  		\begin{equation}\label{e:lpbblassump}
  		h\left(C_{p,\lambda,t}x+D_{p,\lambda,t}y  \right) \geq \avg(f(x),g(y))
  		\end{equation}
  		for all $x,y \in \R^n$ such that $f(x)g(y) >0$ and all $\lambda \in (0,1)$. Then
  		\begin{equation}\label{lpbblinequality}
  		\int_{\R^n}h(x) dx \geq
  		\begin{cases} 
  		M_{\frac{p\alpha}{1+n\alpha}}^{t}\left(\int_{\R^n}f(x) dx, \int_{\R^n}g(x) dx\right), &\text{if } \alpha \geq -\frac{1}{n},\\
  		\min \left\{\left[C_{p,\lambda,t}\right]^{\frac{1+n\alpha}{\alpha}}\int_{\R^n}f(x) dx, \left[D_{p,\lambda,t}\right]^{\frac{1+n\alpha}{\alpha}}\int_{\R^n}g(x) dx \right), &\text{if } \alpha <- \frac{1}{n},
  		\end{cases}
  		\end{equation}
  		for $0< \lambda<1$.
  	}
  	Choosing $p=1$ condition \eqref{e:lpbblassump}  and inequality \eqref{lpbblinequality} return to the classical Borell-Brascamp-Lieb inequality in \cite{BORELL,BrascampLieb}, each of which has found countless applications in various fields of mathematics \cite{Gar1}.  Based on the condition for this inequality, the  $L_{p,s}$-supremal convolution for two functions $f,g:\R^n\rightarrow\R_{+}$   and $s\in[-\infty,\infty]$, $f\oplus_{p,s}g\colon \R^n \to \R_+$  in \cite{RX,RX2} was defined as
  	\begin{equation}\label{e:newsupcolvolution}
  	((1-t)\otimes_{p,s}f\oplus_{p,s}t\otimes_{p,s} g)(z)=
  	\sup_{0<\lambda< 1}\left( \sup_{z=C_{p,\lambda,t}x + D_{p,\lambda,t}y} \left[C_{p,\lambda,t} f(x)^{\frac{1}{s}} +D_{p,\lambda,t}g(y)^{\frac{1}{s}}\right]^s\right)
  	\end{equation} together with its corresponding Brunn-Minkowski inequality and isoperimetric inequality for various types of measures.
  	
  	In this paper, we aim to construct the framework of $L_p$ theory for curvilinear summation for sets in $\R^{n+1}$, bases on which we further establish the corresponding $L_p$ Brunn-Minkowski type inequalities for sets. Together with this geometric inequality and the hypo-graph for functions,  we  present a brand new proof of the $L_p$ Borell-Brascamp-Lieb type inequality for functions, etc. In another way, we introduce the vector power parameter and provide the multiple version of the definition of the $L_p$ curvilinear summation for sets and its corresponding applications including $L_p$ concavity definitions for measures.  The variation formula for the measure and corresponding Minkowski type inequalities and isoperimetric inequalities with respect to multiple $L_p$ curvilinear summation for sets and $L_p$ supremal convolutions for functions are also established.
  	
  More in detail,  we introduce in Section \ref{section2} various extensions of multiple $L_p$-curvilinear combinations for sets containing  Uhrin's classical curvilinear summation for sets as special case. 
  	{\it	Let $A,B \subset (\R_+)^{n+1}$ be bounded Borel sets, $t \in (0,1)$, and power vector $\bar{\alpha} = (\alpha_1,\dots,\alpha_{n+1})$, with $\alpha_i \in [-\infty, \infty]$ for all $i=1,\dots,n+1$. 
  	For $p \geq 1$, we define the $L_{p,\bar{\alpha}}$-curvilinear summation for $A$ and $B$  as 		
	\begin{small} \begin{eqnarray*}\label{e:LpUhrinoperation0}
		&&\!\!\!\!\!\!\!\!\!\!\!\!\!\!\!\!\!\!(1-t) \otimes_{p,\bar{\alpha}} A \oplus_{p,\bar{\alpha}} t \otimes_{p,\bar{\alpha}} B\\		
		&:=&\bigcup_{0 < \lambda < 1} \left\{\left(M_{p,\alpha_1}^{(t,\lambda)}(x_1,y_1),\dots, M_{p,\alpha_{n+1}}^{(t,\lambda)}(x_{n+1},y_{n+1})\right): (x_1,\dots,x_{n+1}) \in A, (y_1,\dots,y_{n+1}) \in B\right\},    n\ge 0.
	\end{eqnarray*}
\end{small}
  \noindent While for $0<p<1$, we define the corresponding $L_{p,\bar{\alpha}}$-curvilinear summation for $A$ and $B$ with replacing $\bigcup_{0 < \lambda < 1}$ by $\bigcap_{0 < \lambda < 1}$ above. 
} We also define the {\it $L_{p,\bar{\alpha}}$-quasi-curvilinear summation} for $A$ and $B$ when $p\geq0$ in a similar way with different coefficients (See details in Section \ref{section2}) for different cases of $p$. Moreover, the monotone properties with respect to  special parameter  power  for $L_{p,\bar{\alpha}}$-curvilinear summation are also introduced.
  	
  	In Sections \ref{section3} we present a detailed proof for  the $L_{p,\bar{\alpha}}$-curvilinear-Brunn-Minkowski inequality  by induction on dimension following the ideas of Uhrin \cite{Uhrin}. The proof relied on the monotone properties under compression operator for  $L_{p,\bar{\alpha}}$-curvilinear summation for sets with special power parameter. 
  	\begin{theorem}\label{t:lpUhrinBM0}
  	Let $p \geq 1$, $t \in (0,1)$, and $\bar{\alpha} = (\alpha_1,\dots,\alpha_{n+1})$ with $\alpha_i \in (0,1]$ for each $i =1,\dots,n$.  Suppose that $A,B \subset (\R_+)^{n+1}$ are bounded Borel sets of positive measure such that $A = \tilde{A}$ and $B = \tilde{B}$. Then the following inequality holds:
  	\begin{equation*}\label{e:Uhrinlpbm}
  	\begin{split}
  	&\!\!\!\!V_{n+1}((1-t) \otimes_{p,\bar{\alpha}} A \oplus_{p,\bar{\alpha}} t \otimes_{p,\bar{\alpha}}B)\\
  	&\geq \begin{cases}
  	M_{p\gamma}^t(V_{n+1}(A),V_{n+1}(B)), &
  	\text{if } \alpha_{n+1} \geq -\left(\sum_{i=1}^n \alpha_i^{-1}\right)^{-1},\\
  	\sup \left\{ \min\left\{\left[C_{p,\lambda,t}\right]^{\frac{1}{\gamma}}V_{n+1}(A),\left[D_{p,\lambda,t}\right]^{\frac{1}{\gamma}} V_{n+1}(B)\right\} \colon 0< \lambda < 1 \right\}, &
  	\text{if } \alpha_{n+1} < -\left(\sum_{i=1}^n \alpha_i^{-1}\right)^{-1}, 
  	\end{cases}
  	\end{split}    
  	\end{equation*}
  	where 
 $
  	\gamma =\left(\sum_{i=1}^{n+1}\alpha_i^{-1}\right)^{-1}.
  $
  \end{theorem}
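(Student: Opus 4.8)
The plan is to reduce Theorem \ref{t:lpUhrinBM0} to the classical curvilinear Brunn--Minkowski inequality \eqref{e:BMUhrin} of Uhrin, via the quasilinearization of the $L_p$-mean and an iterated application of the one-dimensional mechanism, following Uhrin's induction-on-dimension scheme. The starting observation is that the $L_p$-coefficients $(C_{p,\lambda,t},D_{p,\lambda,t})$ satisfy $C_{p,\lambda,t}^q+D_{p,\lambda,t}^q=1$ when $\lambda$ is a genuine parameter, so for a \emph{fixed} $\lambda$ the operation $\bigl((x_i),(y_i)\bigr)\mapsto\bigl(M_{p,\alpha_i}^{(t,\lambda)}(x_i,y_i)\bigr)_i$ is exactly a classical curvilinear-type combination with weights summing to one after the substitution $s=C_{p,\lambda,t}^q\in(0,1)$ and rescaled exponents. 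Because the $L_{p,\bar\alpha}$-curvilinear sum is the \emph{union} over $0<\lambda<1$ of these fixed-$\lambda$ sets, it suffices to lower-bound the volume of a single fixed-$\lambda$ slice and then optimize over $\lambda$; the union only makes the left side larger.

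\textbf{Step 1 (reduce to equal exponents in the first $n$ coordinates, then to a single exponent).} First I would handle the case where all $\alpha_i$ ($i\le n$) are equal to a common value $\alpha$, which is precisely Uhrin's setting after the coefficient substitution, and where \eqref{e:BMUhrin} applies directly with $n$ in place of the ambient dimension and the compression hypothesis $A=\tilde A$, $B=\tilde B$ intact. For general $\alpha_1,\dots,\alpha_n\in(0,1]$ one iterates: fix all but two coordinates, apply the planar case of the classical curvilinear inequality to the pair $(\alpha_i,\alpha_j)$ to replace them by the single effective exponent $(\alpha_i^{-1}+\alpha_j^{-1})^{-1}$, using that compression in the remaining directions is preserved under a coordinate-wise curvilinear operation acting only on two coordinates. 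After $n-1$ such merges the first $n$ coordinates collapse to the effective exponent $(\sum_{i=1}^n\alpha_i^{-1})^{-1}$, and one final merge with $\alpha_{n+1}$ produces $\gamma=(\sum_{i=1}^{n+1}\alpha_i^{-1})^{-1}$. This is where the case split on the sign of $\alpha_{n+1}+(\sum_{i=1}^n\alpha_i^{-1})^{-1}$ — equivalently whether the intermediate effective exponent $(\sum_{i=1}^{n}\alpha_i^{-1})^{-1}$ and $\alpha_{n+1}$ combine above or below the critical value $-1/1$ in the last planar step — enters, giving the two branches of the conclusion.

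\textbf{Step 2 (track the coefficients through the substitution and un-substitute).} With $s=C_{p,\lambda,t}^q$, one has $M_{p,\alpha_i}^{(t,\lambda)}(x_i,y_i)=\bigl[(1-s)x_i^{\alpha_i'}+s\,y_i^{\alpha_i'}\bigr]^{1/\alpha_i'}$ for a suitably rescaled $\alpha_i'$ (the rescaling absorbs the $1/p$ powers in $C,D$). Applying the merged classical inequality \eqref{e:BMUhrin} to the fixed-$s$ slice yields, in the super-critical case, $V_{n+1}\bigl(\text{slice}_\lambda\bigr)\ge M_{\gamma'}^{s}\bigl(V_{n+1}(A),V_{n+1}(B)\bigr)$ with $\gamma'$ the corresponding rescaled version of $\gamma/(1+n\gamma)$-type exponent; converting the $s$-mean back into an $L_p$-mean in $t$ with coefficients $(C_{p,\lambda,t},D_{p,\lambda,t})$ and then taking the supremum over $\lambda\in(0,1)$ produces exactly $M_{p\gamma}^t(V_{n+1}(A),V_{n+1}(B))$, by the same quasilinearization identity \eqref{e:ql} used to define the $L_p$-Minkowski combination (this is the standard fact that $\sup_\lambda M^{(t,\lambda)}_{p,\cdot}=M^t_{p,\cdot}$ at the level of one-dimensional means). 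In the sub-critical case the classical inequality gives a $\min$ of two terms weighted by powers of $1-s$ and $s$; un-substituting turns these into powers of $C_{p,\lambda,t}$ and $D_{p,\lambda,t}$, and since we only have a single $\lambda$-slice available we retain the supremum over $\lambda$ on the outside, which is exactly the stated right-hand side.

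\textbf{The main obstacle} I anticipate is Step 1's bookkeeping: verifying that the compression identity $A=\tilde A$ is genuinely preserved (or can be restored without decreasing volume) after each intermediate two-coordinate curvilinear merge, since the classical theorem \eqref{e:BMUhrin} requires the operands to be their own compressions in the \emph{last} coordinate, and after a merge the ``last coordinate'' in the reduced problem is a nonlinear function of two of the original ones. This is handled by the monotonicity of $L_{p,\bar\alpha}$-curvilinear summation under the compression operator for the relevant special power parameters — precisely the technical ingredient the introduction flags as underlying the proof — so the merges should be carried out in the order that keeps the distinguished coordinate (initially $x_{n+1}$) fixed until the very last step, merging the $\alpha_1,\dots,\alpha_n$ block first where all exponents lie in $(0,1]$ and compression behaves well, and only then combining with $\alpha_{n+1}$. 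A secondary, purely computational nuisance is checking that the rescaled exponents $\alpha_i'$ and the effective $\gamma'$ reassemble into $p\gamma$ and the claimed coefficient powers $[C_{p,\lambda,t}]^{1/\gamma}$, $[D_{p,\lambda,t}]^{1/\gamma}$; this is a direct but slightly tedious manipulation of the relations $p^{-1}+q^{-1}=1$ and $C_{p,\lambda,t}^q+D_{p,\lambda,t}^q=1$.
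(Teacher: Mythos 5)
Your high-level plan (lower-bound each fixed-$\lambda$ slice by a classical curvilinear inequality, then take the supremum over $\lambda$) is in the right spirit, but the mechanism you build it on is wrong. The ``starting observation'' $C_{p,\lambda,t}^q+D_{p,\lambda,t}^q=1$ is false: $C_{p,\lambda,t}^q+D_{p,\lambda,t}^q=(1-t)^{q/p}(1-\lambda)+t^{q/p}\lambda\neq 1$ for $p>1$; what is true (and what the paper uses) is the H\"older bound $M_\lambda:=C_{p,\lambda,t}+D_{p,\lambda,t}\le 1$, with equality only at $\lambda=t$. Likewise there is no ``rescaled exponent'' $\alpha_i'$ that turns $[C_{p,\lambda,t}x^{\alpha_i}+D_{p,\lambda,t}y^{\alpha_i}]^{1/\alpha_i}$ into a mean with weights summing to one. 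The correct repair is to factor out $M_\lambda$ and set $\theta_\lambda=D_{p,\lambda,t}/M_\lambda$: a fixed-$\lambda$ slice is then the image of the classical curvilinear combination with weight $\theta_\lambda$ under the diagonal scaling $\operatorname{diag}\bigl(M_\lambda^{1/\alpha_1},\dots,M_\lambda^{1/\alpha_{n+1}}\bigr)$, so its volume carries the extra factor $M_\lambda^{\sum_i\alpha_i^{-1}}=M_\lambda^{1/\gamma}$. It is precisely this factor --- which your substitution never produces and your Step 2 never tracks --- that turns $M_\lambda^{1/\gamma}M_{\gamma}^{\theta_\lambda}\bigl(V_{n+1}(A),V_{n+1}(B)\bigr)$ into $M_{p,\gamma}^{(t,\lambda)}\bigl(V_{n+1}(A),V_{n+1}(B)\bigr)$ and, after optimizing in $\lambda$, into $M_{p\gamma}^t$; this $(M_\lambda,\theta_\lambda)$ renormalization is exactly what the paper does in Lemma~\ref{t:1dlemma} and Proposition~\ref{t:properties}. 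As written, your constants do not reassemble into the claimed right-hand side.

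The second genuine gap is the classical input itself. Inequality \eqref{e:BMUhrin}, as quoted and as used in the paper, is only for $\bar{\alpha}=(1,\dots,1,\alpha)$, whereas your reduction needs the multi-power classical case with $\alpha_i\in(0,1]$ in the first $n$ coordinates. Your pairwise-merge scheme for producing it fails at the point you yourself flag: a merge of coordinates $i,j\le n$ would need a compression hypothesis (or compression-monotonicity) in the merged coordinate, but the hypothesis $A=\tilde A$ and the only available monotonicity result, Theorem~\ref{compressionvolume}, concern compression along the $(n+1)$-st coordinate with powers $(1,\dots,1,\alpha)$; no analogue for intermediate coordinates is established, so the cited ``monotonicity under the compression operator'' does not handle the obstacle. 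The paper avoids this black-box reduction altogether: it reproves the inequality directly by the Hadwiger--Ohmann induction on finite unions of coordinate boxes, applying the one-dimensional Lemma~\ref{t:1dlemma} coordinate-wise, merging exponents with the two-parameter H\"older inequality \eqref{e:lpholder} (which is where the dichotomy $\alpha_{n+1}\gtrless-\bigl(\sum_{i=1}^n\alpha_i^{-1}\bigr)^{-1}$ enters), using compression monotonicity only in the last coordinate, and finishing with a density argument (Theorem~\ref{t:lpcurvfulld}); Theorem~\ref{t:lpUhrinBM0} is then obtained by repeating that induction with the vector of powers. Until the multi-power classical inequality is proved (or correctly sourced) and the scaling factor above is incorporated, your proposed proof does not close.
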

  \noindent When $p=1$ and  $\bar{\alpha}=\{1,\cdots,1, \alpha\}$, it recovers the curvilinear-Brunn-Minkowski inequality for sets by Uhrin \cite{Uhrin}.
  Moreover,  in Subsection \ref{normalcbm} we further present a normalized version of this $L_{p,\bar{\alpha}}$-curvilinear-Brunn-Minkowski inequality in certain circumstance. 
  	
  	In Section \ref{section4}, based on the relation of hypo-graph of functions in $\R^{n}$ with sets in $\R^{n+1}$ and the $L_p$-curvilinear-Brunn-Minkowski type inequality in Section \ref{section3}, we establish the $L_{p,\bar{\alpha}}$-Borell-Brascamp-Lieb inequality for functions, a multiple version of the $L_p$ Borell-Brascamp-Lieb inequality. While  in \cite{RX,RX2} the traditional proofs of Borell-Brascamp-Lieb type inequality encompass the revolution bodies for functions, mass transportation and classic Borell-Brascamp-Lieb inequality, we present a totally different multiple version using geometric inequalities for sets which further connects the  Brunn-Minkowski theory in convex geometric analysis and functional analysis thereof. The multiple $L_p$ Borell-Brascamp-Lieb inequality states in the following theorem.
  	  	\begin{theorem}\label{t:UhrinPLlp0} 
  		Let $p \geq 1$, $p^{-1} + q^{-1} = 1$, $t \in (0,1)$, $\bar{\alpha}=(\alpha_1,\dots, \alpha_{n+1})$, $\alpha_i \in (0,1]$ for all $i =1,\dots,n$. Suppose that $f,g,h\colon (\R_+)^n \to \R_+$ are a triple of bounded integrable functions having bounded support that satisfy the condition 
  		\begin{align}\label{multipleconvolution0}
  		h\left(M_{p,\alpha_1}^{(t,\lambda)}(x_1,y_1),\dots, M_{p,\alpha_n}^{(t,\lambda)}(x_n,y_n)\right) \geq M_{p,\alpha_{n+1}}^{(t,\lambda)}(f(x_1,\dots,x_n), g(y_1,\dots,y_n))
  		\end{align}
  		for all $x=(x_1,\cdots, x_n), y=(y_1,\cdots, y_n) \in (\R_+)^n$ such that $f(x)g(y) > 0$ and for all $\lambda \in (0,1)$. Then the following inequality holds:
  		\begin{align*}
  		&\int_{(\R_+)^n}h(x) dx\\
  		&\geq \begin{cases}
  		M_{p\gamma}^t\left(\int_{(\R_+)^n}f(x) dx,\int_{(\R_+)^n}g(x) dx \right) ,
  		&\text{if }\ \alpha_{n+1} \geq -\left(\sum_{i=1}^n \alpha_i^{-1}\right)^{-1},\\
  		\sup_{0< \lambda < 1} \min\left\{\left[C_{p,\lambda,t}\right]^{\frac{1}{\gamma}}\int_{(\R_+)^n}f(x) dx,\left[D_{p,\lambda,t}\right]^{\frac{1}{\gamma}} \int_{(\R_+)^n}g(x) dx\right\},
  		& \text{if } \ \alpha_{n+1} < -\left(\sum_{i=1}^n \alpha_i^{-1}\right)^{-1}, 
  		\end{cases}
  		\end{align*} 
  		where 
  		$
  		\gamma =\left(\sum_{i=1}^{n+1}\alpha_i^{-1}\right)^{-1}. $
  	\end{theorem}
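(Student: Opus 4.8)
The plan is to derive Theorem~\ref{t:UhrinPLlp0} from the set-level inequality of Theorem~\ref{t:lpUhrinBM0} by replacing $f,g,h$ with their hypo-graphs, exactly as the classical Borell--Brascamp--Lieb inequality is obtained from \eqref{e:BMUhrin} in the Introduction. First I would dispose of the degenerate case: if $\int_{(\R_+)^n}f=0$ or $\int_{(\R_+)^n}g=0$, then in both branches the right-hand side of the claimed inequality equals $0$ (the $\alpha$-mean vanishes when one entry does, and $\min\{0,\cdot\}=0$), so the inequality is trivial since $h\ge 0$. Hence one may assume $\int f,\int g>0$.

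Next I would set $A=\text{hyp}(f)$, $B=\text{hyp}(g)$ and $C=\text{hyp}(h)$, all contained in $(\R_+)^{n+1}$. Since $f,g,h$ are bounded, integrable and have bounded support, $A,B,C$ are bounded Borel sets, and by Fubini $V_{n+1}(A)=\int_{(\R_+)^n}f$, $V_{n+1}(B)=\int_{(\R_+)^n}g$, $V_{n+1}(C)=\int_{(\R_+)^n}h$; in particular $V_{n+1}(A),V_{n+1}(B)>0$. Because every vertical section of a hypo-graph is an interval of the form $[0,\cdot]$, one has $V_A(z)=f(z)$ for every $z$, hence $\text{hyp}(V_A)=A$, i.e.\ $A=\tilde A$, and likewise $B=\tilde B$. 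Thus $A$ and $B$ satisfy all the hypotheses of Theorem~\ref{t:lpUhrinBM0} with the same power vector $\bar\alpha$.

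The heart of the argument is the inclusion
\[
(1-t)\otimes_{p,\bar\alpha}A\oplus_{p,\bar\alpha}t\otimes_{p,\bar\alpha}B\ \subseteq\ C .
\]
To prove it, fix $\lambda\in(0,1)$, points $(x_1,\dots,x_n,a)\in A$ and $(y_1,\dots,y_n,b)\in B$, write $\bar x=(x_1,\dots,x_n)$, $\bar y=(y_1,\dots,y_n)$, so that $0\le a\le f(\bar x)$ and $0\le b\le g(\bar y)$, and put $z_i=M_{p,\alpha_i}^{(t,\lambda)}(x_i,y_i)$ for $i\le n$ and $c=M_{p,\alpha_{n+1}}^{(t,\lambda)}(a,b)$. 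A short lemma, which I would record separately, states that for every admissible power $\alpha$ the average $M_{p,\alpha}^{(t,\lambda)}(\cdot,\cdot)$ is non-decreasing in each of its two arguments on $[0,\infty)^2$ (with the convention that it vanishes when a factor vanishes), using only $C_{p,\lambda,t},D_{p,\lambda,t}>0$. If $f(\bar x)g(\bar y)>0$, monotonicity followed by hypothesis~\eqref{multipleconvolution0} gives
\[
c=M_{p,\alpha_{n+1}}^{(t,\lambda)}(a,b)\le M_{p,\alpha_{n+1}}^{(t,\lambda)}\bigl(f(\bar x),g(\bar y)\bigr)\le h(z_1,\dots,z_n),
\]
so $(z_1,\dots,z_n,c)\in C$; if $f(\bar x)g(\bar y)=0$, then $ab=0$, hence $c=0\le h(z_1,\dots,z_n)$ and again $(z_1,\dots,z_n,c)\in C$. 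Taking the union over $\lambda\in(0,1)$ (as required for $p\ge1$) preserves the inclusion. Since $C$ is measurable and contains the curvilinear sum, monotonicity of (outer) measure together with Theorem~\ref{t:lpUhrinBM0} applied to $A$ and $B$ yields $V_{n+1}(C)\ge M_{p\gamma}^t(V_{n+1}(A),V_{n+1}(B))$ when $\alpha_{n+1}\ge-\bigl(\sum_{i=1}^n\alpha_i^{-1}\bigr)^{-1}$, and the corresponding $\sup$--$\min$ bound in the other range; rewriting the three volumes as the integrals of $f,g,h$ completes the proof.

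I expect the only delicate points to be, first, the monotonicity lemma for $M_{p,\alpha}^{(t,\lambda)}$ together with the careful bookkeeping of the inclusion across all sign patterns of $\bar\alpha$ and the boundary cases $a=0$, $b=0$, or $f(x)g(y)=0$; and second, the measure-theoretic subtlety that the curvilinear sum of Borel sets need not be Borel---but this is harmless here, since we only need an upper bound for its measure while $\text{hyp}(h)$ is itself Borel, so $V_{n+1}(\text{hyp}(h))\ge V_{n+1}\bigl((1-t)\otimes_{p,\bar\alpha}A\oplus_{p,\bar\alpha}t\otimes_{p,\bar\alpha}B\bigr)$ holds with outer measure on the right, which is exactly the quantity bounded below in Theorem~\ref{t:lpUhrinBM0}.
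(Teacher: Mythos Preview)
Your proposal is correct and follows essentially the same route as the paper: pass to the hypo-graphs $A=\mathrm{hyp}(f)$, $B=\mathrm{hyp}(g)$, observe that $A=\tilde A$, $B=\tilde B$ with $V_{n+1}(A)=\int f$, $V_{n+1}(B)=\int g$, use the hypothesis to compare $\int h$ with the volume of the $L_{p,\bar\alpha}$-curvilinear sum, and then invoke Theorem~\ref{t:lpUhrinBM0}. The only cosmetic difference is that the paper phrases the key step as a pointwise inequality between $h(z)$ and the segment function $V_1\big([(1-t)\otimes_{p,\bar\alpha}A\oplus_{p,\bar\alpha}t\otimes_{p,\bar\alpha}B]\cap(\R_++z)\big)$ rather than as your set inclusion $(1-t)\otimes_{p,\bar\alpha}A\oplus_{p,\bar\alpha}t\otimes_{p,\bar\alpha}B\subseteq\mathrm{hyp}(h)$, but these are equivalent.
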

 \noindent Similarly,  we establish a normalized version of the multiple $L_{p,\bar{\alpha}}$-Borell-Brascamp-Lieb inequality  in Subsection \ref{subsection42}. Furthermore, we present  in Subsection \ref{subsection43} a normalized  $L_{p,\bar{\alpha}}$-curvilinear-Brunn-Minkowski inequality for sets in terms of measures.
  
  In Section \ref{section5}, we focus on the variation formula of the multiple summations for sets  defined on Sections \ref{section2} firstly and establish their corresponding geometric and functional inequalities. More in detail, 
in Section \ref{subsection51},  
 the {\it $L_{p,\bar{\alpha}}$-$\mu$-surface area} for in terms of the $L_{p,\bar{\alpha}}$-curvilinear summation for sets is defined below.
 {\it
Let $\mu$ be a Borel measure on $\R^{n+1}$, $p \geq 1$, and $\bar{\alpha} \in [0, \infty]^{n+1}$. We define the $L_{p,\bar{\alpha}}$-$\mu$-surface area of a $\mu$-integrable set $A$ with respect to a $\mu$-integrable set $B$ by 
 	\[
 	S_{\mu,p, \bar{\alpha}}(A,B) := \liminf_{\e \to 0^+} \frac{	\mu(A +_{p,\bar{\alpha}} \e \times_{p,\bar{\alpha}} B) -\mu(A)}{\e},
 	\]
 }
\noindent with the following $L_p$-Minkowski's first inequality.
  {\it Let $p \in[1,\infty)$, $\bar{\alpha}=(1,\cdots,1,\alpha) \in [0, \infty]^{n+1}$, and $F \colon \R_+\to \R$ be a differentiable invertible function. Let $\mu$ be a Borel measure on $\R^n$, and assume that $\mu$ is $F(t)$-concave of any two  non-negative bounded $\mu$-measure sets $A,B\subset\R^{n+1}$ in terms of the $L_{p,\bar{\alpha}}$-curvilinear summation.
 	Then the following inequality holds: 
 	\begin{equation}\label{e:functionalMink1stconclusion0}
 	S_{\mu,p, \bar{\alpha}}(A,B) \geq S_{\mu,p, \bar{\alpha}}(A,A) + \frac{F\left(\mu(B)\right) - F\left(\mu(A)\right)}{F'\left(\mu(A) \right)}.
 	\end{equation}
 	In particular, when $\mu(A) = \mu(B)$, we obtain the following isoperimetric type inequality: 
 	\[
 	S_{\mu,p, \bar{\alpha}}(A,B) \geq S_{\mu,p, \bar{\alpha}}(A,A). 
 	\]}
 In Subsection \ref{subsection52}, based on the $L_{p,\bar{\alpha}}$ Borell-Brascamp-Lieb inequality for functions above, we introduce the {\it $L_{p, \bar{\alpha}}$ supremal-convolution} of $f, g:\R^n\rightarrow \R_+$ as 
 \begin{align*}
 &\!\!\!\!\!\!\!\!\!\!\!\!\!\!\!((1-t)\times_{p,\bar{\alpha}}f+_{p,\bar{\alpha}}t\times_{p,\bar{\alpha}}g)
 (z_1,\cdots, z_n)\\
 &=\sup_{0<\lambda<1}\left( \sup_{z_i=M_{p,\alpha_i}^{(t,\lambda)}(x_i,y_i), 1\leq i\leq n} M_{p,\alpha_{n+1}}^{(t,\lambda)}\left(f(x_1,\cdots, x_n), g(y_1,\cdots, y_n)\right)\right),
 \end{align*}
 which goes back to the corresponding $L_{p,\bar{\alpha}}$-supremal-convolution when choosing appropriate parameter $\bar{\alpha}=(1,\cdots, 1, \frac{1}{s})$ in (\ref{e:newsupcolvolution}).  
  Moreover, we give the functional counterpart in a similar method for $L_{p,\bar{\alpha}}$-$\mu$-surface area in terms of functions.  That is, {\it
 let $\mu$ be a Borel measure on $\R^n$, $p \geq 1$, and $\bar{\alpha} \in [0, \infty]^{n+1}$. We define the $L_{p,\bar{\alpha}}$-$\mu$-surface area of a $\mu$-integrable function $f \colon \R^n \to \R_+$ with respect to a $\mu$-integrable function $g$ by 
 \[
 \mathbb{S}_{\mu,p,\bar{\alpha}}(f,g) := \liminf_{\e \to 0^+} \frac{\int_{\R^n} f \oplus_{p,\bar{\alpha}} (\e \times_{p,\bar{\alpha}} g) d\mu -\int_{\R^n}f d\mu }{\e}.
 \] }
 The  $L_p$ functional counterparts of $L_p$ Minkowski's first inequality for $ \mathbb{S}_{\mu,p,\bar{\alpha}}(f,g)$ and $L_p$-isoperimetric type inequalities are also established accordingly.
 
  	\section{$L_p$-curvilinear Minkowski convex combinations}\label{section2}
  	In this Section, we introduce the $L_p$-curvilinear  summation for sets  using  the $L_p$ coefficients with two parameters to give the $L_p$-curvilinear summation with vector powers. The case for $p\geq1$ which is a natural generalization of Uhrin's curvilinear summation for sets and the  $0<p<1$ which is much more delicate for the Brunn-Minkowski type inequalities are proposed below.
  	\begin{defi}
  		Let $A,B \subset (\R_+)^{n+1}$ be bounded Borel sets, $t, \lambda \in (0,1)$, and power vector $\bar{\alpha} = (\alpha_1,\dots,\alpha_{n+1})$ with $\alpha_i \in [-\infty, \infty]$ for all $i=1,\dots,n+1$. 
  		
  		(i) For $p \geq 1$, we define the $L_{p,\bar{\alpha}}$-curvilinear summation for $A$ and $B$  as 		
  		\begin{small} \begin{eqnarray*}\label{e:LpUhrinoperation}
  			&&\!\!\!\!\!\!\!\!\!\!\!\!\!\!\!\!\!\!(1-t) \otimes_{p,\bar{\alpha}} A \oplus_{p,\bar{\alpha}} t \otimes_{p,\bar{\alpha}} B\\		
  			&:=&\bigcup_{0 < \lambda < 1} \left\{\left(M_{p,\alpha_1}^{(t,\lambda)}(x_1,y_1),\dots, M_{p,\alpha_{n+1}}^{(t,\lambda)}(x_{n+1},y_{n+1})\right): (x_1,\dots,x_{n+1}) \in A, (y_1,\dots,y_{n+1}) \in B\right\},    n\ge 0.
  			\end{eqnarray*}
  		\end{small}
For $0<p<1$, we define  the corresponding $L_{p,\bar{\alpha}}$-curvilinear summation for $A$ and $B$ replacing $\bigcup_{0 < \lambda < 1}$ by $\bigcap_{0 < \lambda < 1}$ above.
  		
  		(ii) For $p\geq1$, we define the $L_{p,\bar{\alpha}}$-quasi-curvilinear summation for $A$ and $B$  as		
  		\begin{small} \begin{eqnarray}\label{e:LpUhrinoperation}
  			&&\!\!\!\!\!\!\!\!\!\!\!\!\!(1-t) \otimes^{p,\bar{\alpha}} A \oplus^{p,\bar{\alpha}} t \otimes^{p,\bar{\alpha}} B\\
  			&:=&
  			\bigcup_{0 < \lambda < 1} \left\{\left(\min \left\{\left[C_{p,\lambda,t}\right]^{\frac{1}{\alpha_1}}x_1,\left[D_{p,\lambda,t}\right]^{\frac{1}{\alpha_1}}y_1 \right\},\cdots,\min \left\{\left[C_{p,\lambda,t}\right]^{\frac{1}{\alpha_{n+1}}}x_{n+1},\left[D_{p,\lambda,t}\right]^{\frac{1}{\alpha_{n+1}}}y_{n+1} \right\}\right)\right\},  n\ge 0,\nonumber
  			\end{eqnarray}
  		\end{small}
  where	$(x_1,\dots,x_{n+1}) \in A$ and $(y_1,\dots,y_{n+1}) \in B$.
  
  		For $0<p<1$, we define the corresponding $L_{p,\bar{\alpha}}$-quasi-curvilinear summation for $A$ and $B$  replacing $\bigcup_{0 < \lambda < 1}$ by $\bigcap_{0 < \lambda < 1}$ above.
  	\end{defi}
  In particular, without the coefficient $t$, we have \begin{eqnarray*}
  	A \oplus_{p,\bar{\alpha}}  B		
  	&=&\bigcup_{0 < \lambda < 1} \left\{\left(\left((1-\lambda)^{1/q}x_{1}^{\alpha_1}+\lambda^{1/q}y_{1}^{\alpha_1}\right)^{\frac{1}{\alpha_1}},\dots,\left((1-\lambda)^{1/q}x_{n+1}^{\alpha_{n+1}}+\lambda^{1/q}y_{n+1}^{\alpha_{n+1}}\right)^{\frac{1}{\alpha_{n+1}}}\right)\right\},    n\ge 0,
  \end{eqnarray*}
where $(x_1,\dots,x_{n+1}) \in A$ and $ (y_1,\dots,y_{n+1}) \in B,$ i.e., the $L_p$ coefficients $(C_{p,\lambda, t}, D_{p,\lambda,t})$ will be replaced accordingly as $((1-\lambda)^{1/q},\lambda^{1/q})$ in a similar way for the operations $+^{p,\bar{\alpha}}$ and $\times^{p,\bar{\alpha}}$ for all $p>0.$
  On the other hand, for the power parameter,	 if $\bar{\alpha} = (1,\dots,1,\alpha)$, we call the above definition  the {\it $L_{p, \alpha}$-curvilinear Minkowski convex combination} of $A$ and $B$. More in detail,
  	\begin{equation}\label{e:lpcuvsum}
  	\begin{split}
  	&\lpcurv\\
  	&:= \begin{cases}
  	\bigcup_{0 < \lambda < 1} \left\{\left(C_{p,\lambda,t}x+ D_{p,\lambda,t}y,\avg(a,b) \right):(x,a) \in A, (y,b) \in B \right\}, &\text{if}\ n\geq 0,\\
  	\bigcup_{0 < \lambda < 1} \left\{\avg(a,b) :a \in A, b \in B \right\}, &\text{if}\ n=0;\\
  	\end{cases}\\
  	&= \begin{cases}
  	\left(\left\{(1-t)\cdot_p(A\cap \R^n)+_p t\cdot_p (B\cap \R^n),	\bigcup_{0 < \lambda < 1}\avg(a,b) \right):(x,a) \in A, (y,b) \in B \right\}, &\text{if}\ n\geq 0,\\
  	\bigcup_{0 < \lambda < 1} \left\{\avg(a,b) :a \in A, b \in B \right\}, &\text{if}\ n=0,
  	\end{cases}
  	\end{split}
  	\end{equation}
  	and the $L_{p,\alpha}$-quasi curvilinear  Minkowski convex combination of $A$ and $B$ for $\alpha \neq 0$ has a similar definition accordingly.
  	Furthermore, if $p=1$, it  recovers the classical ($L_{1,\alpha}$) curvilinear summation and  ($L_{1,\alpha}$) quasi-curvilinear summation  in
  	(\ref{curvisum}).
  	If $ \bar{\alpha}=(1,\dots,1,1)$, it goes back to the $L_p$ Minkowski summation for $A, B\subset \R^{n+1}$, i.e., $(1-t)\cdot_p A+_p t\cdot_p B$ in (\ref{e:LYZMinkowskicombo}).

  Next we verify the monotone properties of the $L_{p,{\alpha}}$-quasi curvilinear  Minkowski summation in terms of the power $\alpha$.  	
  	\begin{prop}\label{t:properties} Let $t \in (0,1)$, and let $A,B \subset \R^n \times \R_+$ be non-empty bounded Borel measurable sets, with $V_{n+1}(A),V_{n+1}B) > 0$. Then the following statements hold.
  		\begin{enumerate}
  			
  			\item (Increasing on power) If $A = \Tilde{A}$ and $B = \Tilde{B}$, the inclusion  
  			\[
  			(1-t) \times_{p,\alpha} A +_{p,\alpha} t \times_{p,\alpha} B \supset (1-t) \times_{p,\beta} A +_{p,\beta} t \times_{p,\beta} B
  			\]
  			holds for $p \geq 1$, whenever $- \infty \leq \beta < \alpha \leq +\infty$;
  			
  			\item (Increasing on power)  If $A = \Tilde{A}$, $B = \Tilde{B}$, and in  addition, $A,B \subset (\R_+)^{n+1}$, then 
  			\[
  			(1-t) \times^{p,\alpha} A +^{p,\alpha} t \times^{p,\alpha} B \supset (1-t) \times^{p,\beta} A +^{p,\beta} t \times^{p,\beta} B
  			\]
  			holds  for $p \geq 1$
  			whenever $- \infty \leq \beta < \alpha \leq +\infty$.
  			\item (Decreasing on power) If $A = \Tilde{A}$, $B = \Tilde{B}$, and in  addition, $A,B \subset (\R_+)^{n+1}$, then 
  				\[
  			(1-t) \times^{p,\alpha} A +^{p,\alpha} t \times^{p,\alpha} B \subset (1-t) \times^{p,\beta} A +^{p,\beta} t \times^{p,\beta} B
  			\]
  				holds  for $0<p<1$
  			 whenever $- \infty \leq \beta < \alpha \leq +\infty$.

  		\end{enumerate}
  	\end{prop}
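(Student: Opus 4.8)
The plan is to reduce each of the three set inclusions to a one–variable monotonicity statement about the map acting on the last coordinate, using the compression hypothesis $A=\tilde{A}$, $B=\tilde{B}$ to make the fibres transparent. Write $s_\lambda:=C_{p,\lambda,t}+D_{p,\lambda,t}$ throughout, and recall $p^{-1}+q^{-1}=1$.

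First I would fix $\lambda\in(0,1)$ and consider the $\lambda$–slice of the $L_{p,\alpha}$–curvilinear combination, $S_\alpha(\lambda):=\{(C_{p,\lambda,t}x+D_{p,\lambda,t}y,\,M_{p,\alpha}^{(t,\lambda)}(a,b)):(x,a)\in A,\ (y,b)\in B\}$, and likewise $S_\beta(\lambda)$. Since $A=\tilde{A}$ and $B=\tilde{B}$, every vertical fibre of $A$ over a point $z$ of its support is the full interval $[0,V_A(z)]$, and similarly for $B$; because $(a,b)\mapsto M_{p,\alpha}^{(t,\lambda)}(a,b)$ is coordinatewise nondecreasing, continuous on $(\R_+)^2$ and vanishes at the origin, the fibre of $S_\alpha(\lambda)$ over a point $z$ of its projection is exactly $\bigl[0,\ \sup\{M_{p,\alpha}^{(t,\lambda)}(V_A(x),V_B(y)):z=C_{p,\lambda,t}x+D_{p,\lambda,t}y\}\bigr]$. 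The projections of $S_\alpha(\lambda)$ and $S_\beta(\lambda)$ coincide — they involve only the first $n$ powers, which equal $1$ — so $S_\alpha(\lambda)\supset S_\beta(\lambda)$ will follow from the pointwise bound
\[
M_{p,\alpha}^{(t,\lambda)}(a,b)\ \ge\ M_{p,\beta}^{(t,\lambda)}(a,b)\qquad(a,b\ge 0),\qquad(\star)
\]
after which taking the union over $\lambda$ gives (1). Part (3) is the same with $\bigcap_\lambda$ in place of $\bigcup_\lambda$ (forced by $0<p<1$) and $(\star)$ reversed, while parts (2)–(3) run identically with $M_{p,\alpha}^{(t,\lambda)}(a,b)$ replaced by $\min\{[C_{p,\lambda,t}]^{1/\alpha}a,[D_{p,\lambda,t}]^{1/\alpha}b\}$, which has the same regularity properties. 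Passing from fibre containment to the set inclusion is then a short intermediate–value argument: given a point of $S_\beta(\lambda)$ at height $h$ over $z$, I would shrink $a,b$ continuously inside the (compressed) fibres of $A$ and $B$ until the corresponding $M_{p,\alpha}^{(t,\lambda)}$–height decreases from its maximum down to $h$.

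Next I would prove $(\star)$. The algebraic crux is the factorization $M_{p,\alpha}^{(t,\lambda)}(a,b)=s_\lambda^{1/\alpha}\,N_\alpha(a,b)$, where $N_\alpha(a,b)=\bigl[\tfrac{C_{p,\lambda,t}}{s_\lambda}a^\alpha+\tfrac{D_{p,\lambda,t}}{s_\lambda}b^\alpha\bigr]^{1/\alpha}$ is an honest weighted power mean whose weights sum to $1$, so $\alpha\mapsto N_\alpha(a,b)$ is nondecreasing by the classical power–mean inequality. For $p\ge 1$, Hölder's inequality gives $s_\lambda=(1-t)^{1/p}(1-\lambda)^{1/q}+t^{1/p}\lambda^{1/q}\le 1$, with equality exactly at $\lambda=t$; consequently $r\mapsto s_\lambda^{1/r}$ is nondecreasing on each of $(-\infty,0)$ and $(0,\infty)$, and a product of two nonnegative nondecreasing functions is nondecreasing, which yields $(\star)$ whenever $\alpha$ and $\beta$ lie on the same side of $0$. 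When $0<p<1$ the conjugate exponent $q$ is negative and the reverse Hölder inequality gives $s_\lambda\ge 1$, reversing the monotonicity of $r\mapsto s_\lambda^{1/r}$; propagated through the $\min$ in the quasi–combination this is precisely what produces the reversed inclusion in (3). For (2) one argues the same way, noting that $C_{p,\lambda,t},D_{p,\lambda,t}\le 1$ for $p\ge 1$, so $r\mapsto[C_{p,\lambda,t}]^{1/r}$ and $r\mapsto[D_{p,\lambda,t}]^{1/r}$ are nondecreasing on each side of $0$ and $\min$ of two nondecreasing maps is nondecreasing.

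I expect the main obstacle to be the degenerate values of the power — $\alpha$ or $\beta$ equal to $0$ or $\pm\infty$ — together with the related case in which $\alpha$ and $\beta$ straddle the origin. The factorization $M_{p,r}^{(t,\lambda)}=s_\lambda^{1/r}N_r$ is discontinuous at $r=0$ whenever $s_\lambda\ne1$ (i.e.\ for every $\lambda\ne t$), and $M_{p,0}^{(t,\lambda)}$ is defined by its own formula $a^{C_{p,\lambda,t}}b^{D_{p,\lambda,t}}$ rather than as a limit, so for fixed $\lambda$ one cannot simply declare $r\mapsto M_{p,r}^{(t,\lambda)}(a,b)$ monotone across $r=0$. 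Here the union (respectively intersection) over $\lambda$ would need to be exploited: $\sup_\lambda\bigl(C_{p,\lambda,t}a^r+D_{p,\lambda,t}b^r\bigr)$ is attained at $\lambda=t$, where $s_\lambda=1$ and $M_{p,r}^{(t,t)}(a,b)=M_r^t(a,b)$ collapses to the ordinary power mean with weights $1-t,t$, for which monotonicity in $r$ is classical. Verifying that this extremal choice of $\lambda$ is compatible with the inclusion being proved, together with the $\pm\infty$ endpoints (via $\lim_{r\to\pm\infty}M_{p,r}^{(t,\lambda)}(a,b)=\max\{a,b\}$ and $\min\{a,b\}$ respectively) and the degenerate case where some fibre of $A$ or $B$ has length $0$, is the technical heart of the argument.
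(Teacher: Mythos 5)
Your core argument is the same as the paper's proof of this proposition: for each fixed $\lambda$ you factor $M_{p,\alpha}^{(t,\lambda)}(a,b)=M_{\lambda}^{1/\alpha}\left[(1-\theta_{\lambda})a^{\alpha}+\theta_{\lambda}b^{\alpha}\right]^{1/\alpha}$ with $M_{\lambda}=C_{p,\lambda,t}+D_{p,\lambda,t}$ and $\theta_{\lambda}=D_{p,\lambda,t}/M_{\lambda}$, invoke the classical power-mean inequality for the normalized mean, use $M_{\lambda}\le 1$ (H\"older, $p\ge1$) to compare the prefactors, and then use $A=\tilde{A}$, $B=\tilde{B}$ to lower the last coordinate to the required height; your treatment of parts (2) and (3) via $C_{p,\lambda,t},D_{p,\lambda,t}\le1$ for $p\ge1$ and $\ge1$ for $0<p<1$ is also exactly the paper's argument.

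Where you go beyond the paper is in flagging the case in which $\alpha$ and $\beta$ straddle $0$ (and the degenerate powers) as the remaining ``technical heart.'' You are right that the fixed-$\lambda$ comparison breaks down there: the step $M_{\lambda}^{1/\alpha}\ge M_{\lambda}^{1/\beta}$ needs $1/\alpha\le 1/\beta$, i.e.\ $\alpha,\beta$ of the same sign, a restriction the paper's own chain of inequalities also requires but does not mention. However, your proposed repair does not work. First, $\sup_{0<\lambda<1}\bigl(C_{p,\lambda,t}a^{r}+D_{p,\lambda,t}b^{r}\bigr)$ is attained at a $\lambda$ determined by the ratio $a^{r}:b^{r}$ (this is the quasilinearization identity), not at $\lambda=t$ unless $a=b$; second, and more fundamentally, you cannot pass to a different $\lambda$ without changing the first coordinate $z=C_{p,\lambda,t}x+D_{p,\lambda,t}y$ of the point you must capture, so the union over $\lambda$ cannot absorb the failure of the pointwise bound. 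In fact no argument can close this case for $p>1$: take $A=B=[0,1]^{n+1}$ (which satisfy $A=\tilde{A}$, $B=\tilde{B}$), finite $\beta<0<\alpha$, $a=b=1$ and any $\lambda\ne t$; then the $\beta$-combination contains a point of height $M_{\lambda}^{1/\beta}>1$, while every point of the $\alpha$-combination has height at most $\sup_{\lambda}M_{\lambda}^{1/\alpha}=1$, so the claimed inclusion fails. Thus both your write-up and the paper's proof are complete only when $\alpha$ and $\beta$ lie on the same side of $0$ (with the extremal powers handled by the analogous limiting comparisons), or when $p=1$, where $M_{\lambda}\equiv1$ and the ordinary power-mean monotonicity applies across $0$; the case $\beta<0<\alpha$ with $p>1$ should be excluded rather than proved.
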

  	
  	\begin{proof}
  		(1) Choose any point $(z,r) \in (1-t) \times_{p,\beta} A +_{p,\beta} t \times_{p,\beta} B$ with $- \infty \leq \beta < \alpha \leq +\infty$.  Then by (\ref{e:lpcuvsum}), there exist $(x,a) \in A$, $(y,b) \in B$, and some $\lambda, t \in (0,1)$, such that 
  		\[
  		(z,r) = \left( C_{p,\lambda,t}x + D_{p,\lambda,t}y, M_{p,\beta}^{(t,\lambda)}(a,b) \right). 
  		\]
  It is easy to check that (for $\alpha, \beta \neq 0, \pm \infty$), 
  		\begin{align*}
  		\avg(a,b) &= \left(C_{p,\lambda,t}a^{\alpha} + D_{p,\lambda,t}b^{\alpha} \right)^{1/\alpha}\\
  		&= (M_{\lambda})^{1/\alpha}((1-\theta_{\lambda})a^{\alpha} +\theta_{\lambda}b^{\alpha})^{\frac{1}{\alpha}}\\
  		&\geq (M_{\lambda})^{1/\alpha}((1-\theta_{\lambda})a^{\beta} +\theta_{\lambda}b^{\beta})^{\frac{1}{\beta}}\\
  			&\geq (M_{\lambda})^{1/\beta}((1-\theta_{\lambda})a^{\beta} +\theta_{\lambda}b^{\beta})^{\frac{1}{\beta}}\\
  		&= M_{p,\beta}^{(t,\lambda)}(a,b),
  		\end{align*}
  		where we have used H\"older's inequality \cite{RX} $M_{\lambda}\leq 1$ when  $p\geq1$ for  \[
  		M_{\lambda}: =C_{p,\lambda,t} + D_{p,\lambda,t} \quad \text{and} \quad \theta_{\lambda} :=D_{p,\lambda,t}/M_{\lambda}.
  		\] This, together with the fact that $A = \Tilde{A}$ and $B = \Tilde{B}$, yields the desired result. The extremal cases $\alpha, \beta = 0, \pm \infty$ follow similarly.

  	 (2) As  for any $0<\lambda<1$, $0<t<1$, we have  $C_{p,\lambda,t}\leq1$ and $D_{p,\lambda, t}\leq1$ for $p\geq1$. Further since $\beta<\alpha$,  one can easily check that 
$
  	 C_{p,\lambda,t}^{\frac{1}{\alpha}}> C_{p,\lambda,t}^{\frac{1}{\beta}}
 $
  	 and 
  	  $
  	 D_{p,\lambda,t}^{\frac{1}{\alpha}}> D_{p,\lambda,t}^{\frac{1}{\beta}}$. Therefore the inclusion relation stays true for the $+^{p,\alpha}$ and  $\times^{p,\alpha}$ when  $p\geq 1$.
  
 (3) For $0<p<1$, $C_{p,\lambda,t}\geq1$ and $D_{p,\lambda, t}\geq1$, thus  the reverse inclusion relation holds.
  	    	\end{proof}
      	\begin{remark}
      		The inclusion or its inverse form is not applicable for the case of $+_{p,\alpha}$ and $\times_{p,\alpha}$ when $0<p<1$.
      		\end{remark}
      	Utilizing the method of mathematical induction on the number of power parameter, naturally we obtain the following corollary about the monotone properties of the $L_{p,\bar{\alpha}}$-curvilinear summation ($+_{p, \bar{\alpha}}, \times_{p,\bar{\alpha}}$) and $L_{p,\bar{\alpha}}$-curvilinear summation ($+^{p, \bar{\alpha}}, \times^{p,\bar{\alpha}}$) below.
  	   \begin{coro}
  	   	Let $t \in (0,1)$, and let $A,B \subset \R^n \times \R_+$ be non-empty bounded Borel measurable sets, with $V_{n+1}(A),V_{n+1}B) > 0$. Then the following statements hold.
  	   	\begin{enumerate}
  	   		
  	   		\item (Increasing on power) If $A = \Tilde{A}$, $B = \Tilde{B}$, $\bar{\alpha}=(\alpha_1,\cdots,\alpha_{n+1})$ and $\bar{\beta}=(\beta_1,\cdots,\beta_{n+1})$, 		
  	   		the inclusion  
  	   		\[
  	   		(1-t) \times_{p,\bar{\alpha}} A +_{p,\bar{\alpha}} t \times_{p,\bar{\alpha}} B \supset (1-t) \times_{p,\bar{\beta}} A +_{p,\bar{\beta} } t \times_{p,\bar{\beta}} B
  	   		\]
  	   		holds for $p \geq 1$, whenever $- \infty \leq \beta_i \leq\alpha_i \leq +\infty$ for all $1\leq i\leq n+1$;
  	   		
  	   		\item (Increasing on power)  If $A = \Tilde{A}$, $B = \Tilde{B}$, $\bar{\alpha}=(\alpha_1,\cdots,\alpha_{n+1})$, $\bar{\beta}=(\beta_1,\cdots,\beta_{n+1})$, 		 and in  addition, $A,B \subset (\R_+)^{n+1}$, then 
  	   		\[
  	   		(1-t) \times^{p,\bar{\alpha}} A +^{p,\bar{\alpha}} t \times^{p,\bar{\alpha}} B \supset (1-t) \times^{p,\bar{\beta}} A +^{p,\bar{\beta}} t \times^{p,\bar{\beta}} B
  	   		\]
  	   		holds  for $p \geq 1$ 	  whenever $- \infty \leq \beta_i \leq\alpha_i \leq +\infty$ for all $1\leq i\leq n+1$.

  	   			\item (Decreasing on power)  If $A = \Tilde{A}$, $B = \Tilde{B}$, $\bar{\alpha}=(\alpha_1,\cdots,\alpha_{n+1})$, $\bar{\beta}=(\beta_1,\cdots,\beta_{n+1})$, 		 and in  addition, $A,B \subset (\R_+)^{n+1}$, 
  	   		\[
  	   		(1-t) \times^{p,\bar{\alpha}} A +^{p,\bar{\alpha}} t \times^{p,\bar{\alpha}} B \subset (1-t) \times^{p,\bar{\beta}} A +^{p,\bar{\beta}} t \times^{p,\bar{\beta}} B
  	   		\]
  	   		holds  for $0<p<1$
  	   	 whenever $- \infty \leq \beta_i \leq\alpha_i \leq +\infty$ for all $1\leq i\leq n+1$.
  	   		\end{enumerate}	
  	   	\end{coro}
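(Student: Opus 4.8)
The plan is to reduce the multi-parameter monotonicity to the single-parameter Proposition~\ref{t:properties} by changing the powers one coordinate at a time. Fix $\bar{\alpha}=(\alpha_1,\dots,\alpha_{n+1})$ and $\bar{\beta}=(\beta_1,\dots,\beta_{n+1})$ with $-\infty\le\beta_i\le\alpha_i\le+\infty$ for every $i$, and for $0\le k\le n+1$ put $\bar{\gamma}^{(k)}:=(\alpha_1,\dots,\alpha_k,\beta_{k+1},\dots,\beta_{n+1})$, so that $\bar{\gamma}^{(0)}=\bar{\beta}$ and $\bar{\gamma}^{(n+1)}=\bar{\alpha}$. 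Because set inclusion is transitive, it suffices to prove, for each $k=1,\dots,n+1$, the one-step inclusion
\[
(1-t)\times_{p,\bar{\gamma}^{(k-1)}}A+_{p,\bar{\gamma}^{(k-1)}}t\times_{p,\bar{\gamma}^{(k-1)}}B\ \subset\ (1-t)\times_{p,\bar{\gamma}^{(k)}}A+_{p,\bar{\gamma}^{(k)}}t\times_{p,\bar{\gamma}^{(k)}}B,
\]
in which only the $k$-th power changes, from $\beta_k$ up to $\alpha_k$; composing these $n+1$ inclusions then yields part (1). This is precisely an induction on the number of coordinates in which $\bar{\alpha}$ and $\bar{\beta}$ disagree, the base case being a single change. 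Parts (2) and (3) run through the same telescoping, with $\times_{p,\bar{\gamma}},+_{p,\bar{\gamma}}$ replaced by the quasi-operations $\times^{p,\bar{\gamma}},+^{p,\bar{\gamma}}$ and with the standing assumption $A,B\subset(\R_+)^{n+1}$ in force there.

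For a single one-step inclusion, take $z=(z_1,\dots,z_{n+1})$ in the $\bar{\gamma}^{(k-1)}$-sum, so there are $\lambda\in(0,1)$, $(x_1,\dots,x_{n+1})\in A$ and $(y_1,\dots,y_{n+1})\in B$ with $z_i=M_{p,\gamma_i}^{(t,\lambda)}(x_i,y_i)$ for all $i$, where $\gamma_i$ is the $i$-th entry of $\bar{\gamma}^{(k-1)}$. Keeping the same $\lambda$ and the same witnesses, let $z'$ agree with $z$ except in the $k$-th coordinate, where $z'_k:=M_{p,\alpha_k}^{(t,\lambda)}(x_k,y_k)$; then $z'$ lies in the $\bar{\gamma}^{(k)}$-sum, and by the pointwise inequality extracted inside the proof of Proposition~\ref{t:properties}(1), namely $M_{p,\alpha_k}^{(t,\lambda)}(x_k,y_k)\ge M_{p,\beta_k}^{(t,\lambda)}(x_k,y_k)$ for $p\ge1$ (itself a consequence of the classical power-mean inequality together with $M_\lambda:=C_{p,\lambda,t}+D_{p,\lambda,t}\le1$ from H\"older), we have $z'_k\ge z_k$ while $z'_j=z_j$ for $j\ne k$. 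It then remains to pass from $z'$ down to $z$ inside the $\bar{\gamma}^{(k)}$-sum, i.e.\ to lower the $k$-th coordinate back to $z_k$. When $k=n+1$ this is exactly the mechanism of Proposition~\ref{t:properties}(1): using $A=\tilde{A}$, $B=\tilde{B}$ and the positive homogeneity of $M_{p,\alpha_{n+1}}^{(t,\lambda)}$, rescale the last coordinates of the witnesses to $a'=sx_{n+1}$, $b'=sy_{n+1}$ with $s=z_{n+1}/z'_{n+1}\in(0,1]$; these still belong to $A$ and $B$ and realize $z$. For parts (2) and (3) one repeats the argument with the elementary comparisons $[C_{p,\lambda,t}]^{1/\alpha_k}\ge[C_{p,\lambda,t}]^{1/\beta_k}$ and $[D_{p,\lambda,t}]^{1/\alpha_k}\ge[D_{p,\lambda,t}]^{1/\beta_k}$ (valid for $p\ge1$, where $C_{p,\lambda,t},D_{p,\lambda,t}\le1$) replacing the mean inequality, and with the reversed comparisons when $0<p<1$, matching Proposition~\ref{t:properties}(2)--(3).

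The step requiring the most care, and thus the main obstacle, is this ``descent'' when the coordinate being changed is one of the first $n$: the compression hypothesis $A=\tilde{A}$, $B=\tilde{B}$ forces downward-closedness only in the last coordinate, so shrinking a spatial coordinate of a witness need not keep it inside $A$ or $B$. The remedy is to carry out the telescoping in an order that always works against the hypograph structure --- change the last power first, and check that each intermediate curvilinear sum is again a hypograph over $\R^n$ in its last coordinate (this uses the homogeneity of the $\alpha$-means and the fact that the union over $\lambda$ is formed coordinate-wise), so the hypothesis needed for the next application of Proposition~\ref{t:properties} is regenerated; for the spatial steps one then uses that, after compression, the relevant fibers are full intervals and that each $M_{p,\alpha_i}^{(t,\lambda)}$ is increasing and positively homogeneous in its arguments, so that a decrease in a spatial coordinate can be absorbed by a joint rescaling of the witnesses staying inside $A$ and $B$. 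Establishing this stability of the compressed structure under a single one-step curvilinear operation is the technical heart of the proof; everything else is the routine bookkeeping of composing $n+1$ inclusions.
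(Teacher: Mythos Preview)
Your telescoping strategy---change one power parameter at a time and invoke Proposition~\ref{t:properties} at each step---is exactly what the paper intends: its entire proof of the corollary is the single sentence that the result follows ``utilizing the method of mathematical induction on the number of power parameter'' from Proposition~\ref{t:properties}. So on the level of approach you and the paper agree.

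You have, however, correctly isolated a genuine gap that the paper passes over and that your own ``remedy'' paragraph does not close. The hypothesis $A=\tilde A$, $B=\tilde B$ yields downward-closedness only in the \emph{last} coordinate; it says nothing about the first $n$ fibers. Your proposal to absorb a decrease in a spatial coordinate by rescaling the witnesses does not work, because shrinking a spatial coordinate of a point of $A$ need not keep it in $A$. Concretely, take $n=1$, $p=1$, $t=\tfrac12$, and
\[
A=B=\bigl([1,1.1]\cup[2,2.1]\bigr)\times[0,1]\subset(\R_+)^2,
\]
which satisfy $A=\tilde A$, $B=\tilde B$ and have positive area. With $\bar\beta=(0,1)$ and $\bar\alpha=(1,1)$ one has $C_{1,\lambda,1/2}=D_{1,\lambda,1/2}=\tfrac12$, so the $\bar\beta$-sum contains points with first coordinate $\sqrt{1\cdot 2}=\sqrt2$, while the first coordinates realised by the $\bar\alpha$-sum lie in $[1,1.1]\cup[1.5,1.6]\cup[2,2.1]$, which misses $\sqrt2$. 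Hence the inclusion in part~(1) fails for this pair.

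Thus the obstacle you flagged is not a technicality but a real obstruction: the one-step ``descent'' in a spatial index cannot be carried out under the stated hypotheses, and neither the paper's inductive sentence nor your telescoping argument goes through as written. A clean fix is to add the hypothesis that $A$ and $B$ are downward-closed (equivalently, compressed) in \emph{every} coordinate, not only the last; alternatively, restrict the corollary to vectors $\bar\alpha,\bar\beta$ that differ only in the $(n{+}1)$-st entry, which is precisely what Proposition~\ref{t:properties} actually proves and is all that the subsequent applications in the paper appear to need.
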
		
  	\section{$L_{p}$-curvilinear-Brunn-Minkowski type inequalities}\label{section3}
 Following the concepts we proposed in   Section \ref{section2} the $L_{p,\bar{\alpha}}$-curvilinear summation ($+_{p, \bar{\alpha}}, \times_{p,\bar{\alpha}}$) and $L_{p,\bar{\alpha}}$-curvilinear summation ($+^{p, \bar{\alpha}}, \times^{p,\bar{\alpha}}$),  we aim to establish a  $L_p$-curvilinear-Brunn-Minkowski type inequality for sets for $p\geq1$. In Subsection \ref{lpcbmi}, we prove the multiple $L_{p,\bar{\alpha}}$-curvilinear-Brunn-Minkowski inequality by mathematical induction based on the properties of compression of $L_{p,\alpha}$-curvilinear summation. We also present a normalized version of $L_{p,\bar{\alpha}}$-curvilinear-Brunn-Minkowski inequality in Subsection \ref{normalcbm}.
  	\subsection{Proof of $L_{p,\bar{\alpha}}$-curvilinear-Brunn-Minkowski inequality for sets}\label{lpcbmi}
  	Firstly, we show the 1-dimensional Brunn-Minkowski type inequality in terms of  the $L_{p,\alpha}$-curvilinear summation for sets.
  	
  	\begin{lemma}\label{t:1dlemma} Let $p \geq 1$, $t \in (0,1)$, and $\alpha \in (-\infty,1]$. Let sets $K,L \subset \R_+$ be of the form 
  		\[
  		K = \bigcup_{i=1}^m [a_i,b_i] \quad L=\bigcup_{j=1}^n [c_j,d_j],
  		\]
  		where all of the intervals in the unions in $K$ and $L$ have mutually disjoint interiors and $m, n \in\mathbb{N}$ (the set of natural numbers). Then 
  		\begin{equation}\label{lpainequality}
  		V_1((1-t) \times_{p,\alpha} K +_{p,\alpha} t \times_{p,\alpha} L) \geq M_{p\alpha}^t(V_1(K), V_1(L)). 
  		\end{equation}
  	\end{lemma}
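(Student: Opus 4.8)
My plan is to reduce the $n=0$ statement to a one–parameter family of ``slice'' inequalities indexed by $\lambda\in(0,1)$, and then to establish each slice inequality by induction on the total number of constituent intervals, following the elementary one–dimensional Brunn--Minkowski argument (this mimics Uhrin's treatment of the $p=1$ curvilinear case, but nothing here actually requires $p=1$).

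\emph{Reduction to one $\lambda$.} The $L_{p,\alpha}$-curvilinear combination is a union over $\lambda$ of the sets $S_\lambda:=\{\avg(a,b):a\in K,\ b\in L\}$, each of which is a finite union of intervals, so $V_1\big((1-t)\times_{p,\alpha}K+_{p,\alpha}t\times_{p,\alpha}L\big)\ge\sup_{0<\lambda<1}V_1(S_\lambda)$. A direct optimisation in $\lambda$ (H\"older's inequality, or Lagrange multipliers, as in \cite{RX,RX2}) gives $\sup_{0<\lambda<1}\avg(x,y)\ge M_{p\alpha}^t(x,y)$ for $x,y\ge0$, with equality when $\alpha>0$; hence it is enough to prove, for each fixed $\lambda$,
\begin{equation*}
V_1(S_\lambda)\ \ge\ \avg(V_1(K),V_1(L)),
\end{equation*}
and when $\alpha=0$ I would work with the slice $\lambda=t$, for which $M_{p,0}^{(t,t)}(a,b)=a^{1-t}b^{t}=M_0^t(a,b)$. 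Writing $\Phi(u,v):=(C_{p,\lambda,t}u^{\alpha}+D_{p,\lambda,t}v^{\alpha})^{1/\alpha}$ (and $\Phi(u,v):=u^{1-t}v^{t}$ when $\alpha=0$), the features of $\Phi$ I will use are: it is continuous, strictly increasing in each variable on $\R_+$, positively $1$-homogeneous, and — precisely because $\alpha\le1$ — concave, hence superadditive on $\R_+\times\R_+$ (the reverse Minkowski inequality). I then prove the displayed slice inequality by induction on $m+n$.

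\emph{Base case and inductive step.} If $m=n=1$, monotonicity of $\Phi$ shows $S_\lambda=\Phi([a_1,b_1]\times[c_1,d_1])$ is the interval with endpoints $\Phi(a_1,c_1)$ and $\Phi(b_1,d_1)$, so $V_1(S_\lambda)=\Phi(b_1,d_1)-\Phi(a_1,c_1)\ge\Phi(b_1-a_1,d_1-c_1)=\avg(V_1(K),V_1(L))$ by superadditivity (the degenerate case $a_1=0$ or $c_1=0$ with $\alpha<0$ is covered by the convention $\Phi(0,\cdot)=0$, the image remaining an interval). If $m+n\ge3$, then after possibly interchanging $(K,L,t)\leftrightarrow(L,K,1-t)$ I may assume $m\ge2$; I split $K=K_1\cup K_2$ at one of its gaps, so $\max K_1<\min K_2$ and each $K_i$ is a union of at most $m-1$ intervals, and I use the intermediate value theorem to pick $b^{\ast}$ so that $L_1:=L\cap(-\infty,b^{\ast}]$ and $L_2:=L\cap[b^{\ast},\infty)$ satisfy $V_1(L_i)=\theta_i\,V_1(L)$ with $\theta_i:=V_1(K_i)/V_1(K)\in(0,1)$; each $L_i$ is a nonempty union of at most $n$ intervals, so $\#K_i+\#L_i<m+n$. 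Because $\Phi$ is increasing in each variable and $\max K_1<\min K_2$, the sets $\Phi(K_1\times L_1)$ and $\Phi(K_2\times L_2)$ have disjoint interiors, whence
\begin{equation*}
V_1(S_\lambda)\ \ge\ V_1\big(\Phi(K_1\times L_1)\big)+V_1\big(\Phi(K_2\times L_2)\big)\ \ge\ \sum_{i=1}^{2}\avg(V_1(K_i),V_1(L_i))=\avg(V_1(K),V_1(L)),
\end{equation*}
where the second inequality is the induction hypothesis and the last equality uses $V_1(K_i)=\theta_i V_1(K)$, $V_1(L_i)=\theta_i V_1(L)$ and the $1$-homogeneity of $\avg(\cdot,\cdot)$ together with $\theta_1+\theta_2=1$. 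This closes the induction and, after taking $\sup_\lambda$, proves the lemma.

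\emph{Main obstacle.} The step I expect to be the crux is the inductive step: the $L_{p,\alpha}$-curvilinear operation is \emph{not} translation invariant, so the classical device of translating the second set until a hyperplane cuts it in the right proportion is unavailable; it must be replaced by splitting $L$ at a point chosen by the intermediate value theorem, and what makes the two resulting pieces of the curvilinear sum fit together without overlap is the monotonicity of $\avg(\cdot,\cdot)$ in each variable (together with $\alpha\le1$, which is exactly the range in which the base-case superadditivity holds). A minor point to keep track of is that $\sup_\lambda\avg$ equals $M_{p\alpha}^{t}$ only when $\alpha>0$ and is otherwise merely $\ge M_{p\alpha}^{t}$, which is nonetheless the inequality the lemma claims.
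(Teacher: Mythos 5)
Your proof is correct and takes essentially the same route as the paper's: the same induction on the total number of intervals, with the reverse Minkowski (superadditivity) inequality for the base case $m=n=1$, a proportional splitting of $K$ and $L$ in the inductive step, and a final optimization over $\lambda$ passing from $\avg$ to $M_{p\alpha}^t$. Fixing $\lambda$ first and producing the splitting points explicitly (gap of $K$ plus an intermediate-value choice for $L$) is just a more transparent organization of the paper's Step 2, and your remark that $\sup_{\lambda}\avg$ merely dominates, rather than equals, $M_{p\alpha}^t$ when $\alpha\le 0$ is a correct refinement of the same argument.
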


  	\begin{proof} Let us consider the case $\alpha \neq 0, - \infty$.  The proof follows by induction on the integer $N=m+n$. 
  		
  		\textbf{Step 1}: Assume that $m=n=1$ ($N=2$), in which case $K = [a,b]$ and $L = [c,d]$ for some  appropriately chosen $0\leq a<b,\ 0\leq c<d$. It is easy to check using the definition of $L_{p,\alpha}$-curvilinear summation for 1-dimension in (\ref{e:lpcuvsum}) that
  		\[
  		(1-t) \times_{p,\alpha} K +_{p,\alpha} t \times_{p,\alpha} L = \bigcup_{0 < \lambda < 1} \left[\avg(a,c), \avg(b,d) \right]. 
  		\]
  		For each fixed $\lambda \in (0,1)$, letting 
  		\[
  		M_{\lambda}: =C_{p,\lambda,t} + D_{p,\lambda,t} \quad \text{and} \quad \theta_{\lambda} :=D_{p,\lambda,t}/M_{\lambda},
  		\]
  		we obtain by Minkowski inequality that 
  		\begin{align*}
  		V_1\left(\left[\avg(a,c), \avg(b,d) \right]\right)
  		&= \avg(b,d) - \avg(a,c)\\
  		&= \left[C_{p,\lambda,t} b^{\alpha} + D_{p,\lambda,t}d^{\alpha}\right]^{\frac{1}{\alpha}} - \left[C_{p,\lambda,t} a^{\alpha} + D_{p,\lambda,t}c^{\alpha}\right]^{\frac{1}{\alpha}}\\
  		&= (M_{\lambda})^{1/\alpha}\left\{ \left[ (1-\theta_{\lambda}) b^{\alpha} + \theta_{\lambda}d^{\alpha}\right]^{\frac{1}{\alpha}} -\left[ (1-\theta_{\lambda}) a^{\alpha} + \theta_{\lambda}c^{\alpha}\right]^{\frac{1}{\alpha}}  \right\}\\
  		&\geq (M_{\lambda})^{\alpha}\left[ (1-\theta_{\lambda}) (b-a)^{\alpha} + \theta_{\lambda}(d-c)^{\alpha}\right]^{\frac{1}{\alpha}}\\
  		&= \avg(V_1(K),V_1(L)).
  		\end{align*}
  		This further implies that 
  		\begin{equation}\label{Lpbminequlaity}
  		V_1((1-t) \times_{p,\alpha} K +_{p,\alpha} t \times_{p,\alpha} L) \geq \sup_{0 < \lambda < 1} \avg(V_1(K),V_1(L))=
  		M_{p\alpha}^t(V_1(K), V_1(L)),
  		\end{equation}
  		verifying the  (\ref{lpainequality}) inequality  for case $m=n=1$. 
  		
  		\textbf{Step 2}:  Let $N > 2$ be an integer, and assume the inequality (\ref{lpainequality}) holds for the case $N -1$. Let $k,l > 0$, and set 
  		\[
  		K_1 := K \cap [0,k], \quad K_2 := K \cap [k,\infty),
  		\]
  		\[
  		L_1 := L \cap [0,l], \quad L_2: = L \cap [l,\infty). 
  		\]
  	With assumptions placed on $K$ and $L$, the sets $K_1$, $K_2$ and $L_1$, $L_2$, are mutually almost disjoint, and we see that
  		$
  		V_1(K) = V_1(K_1 \cup K_2) = V_1(K_1)+ V_1(K_2)
  		$ and
  		$
  		V_1(L) = V_1(L_1 \cup L_2) = V_1(L_1)+ V_1(L_2).
  		$

  Denote $k_1,k_2,$ and $l_1,l_2$,  the number of intervals in $K_1,K_2$, and $L_1,L_2$, respectively. Without loss of generality, we chose $k,l$ satisfying conditions below: 
  		\[
  		k_1 + l_1 \leq m+n -1, \quad k_2 + l_2 \leq m+n - 1,
  		\]
  		and 
  		\[
  		\frac{V_1(K_1)}{V_1(L_1)} = \frac{V_1(K_2)}{V_1(L_2)} = r >0. 
  		\]
  	We obtain by definition of $L_{p,\alpha}$-curvilinear summation for sets in (\ref{e:lpcuvsum}) that
  		\begin{align*}
  		(1-t) \times_{p,\alpha} K +_{p,\alpha} t \times_{p,\alpha} L &= \bigcup_{0 < \lambda < 1}\left( \bigcup_{i,j = 1}^2\left\{\avg(u,v) : (u,v) \in K_i \times L_j \right\} \right)\\
  		&\supset \bigcup_{0 < \lambda < 1 } \bigcup_{i=1}^2 \left\{\avg(u,v) : (u,v) \in K_i \times L_i \right\}\\
  		&= \bigcup_{i=1}^2 \left\{(1-t) \times_{p,\alpha} K_i +_{p,\alpha} t \times_{p,\alpha} L_i \right\},
  		\end{align*}
  		where the final union is an almost disjoint union.  Further applying the inductive step, we observe by (\ref{Lpbminequlaity}) that 
  		\begin{equation}\label{e:1lemma}
  		\begin{split}
  		&\!\!\!\!\!\!\!\!V_1((1-t) \times_{p,\alpha} K +_{p,\alpha} t \times_{p,\alpha} L) \\
  		&\geq V_1(\bigcup_{i=1}^2 (1-t) \times_{p,\alpha} K_i +_{p,\alpha} t \times_{p,\alpha} L_i)\\
  		&= V_1((1-t) \times_{p,\alpha} K_1 +_{p,\alpha} t \times_{p,\alpha} L_1)+ V_1((1-t) \times_{p,\alpha} K_2 +_{p,\alpha} t \times_{p,\alpha} L_2)\\
  		&\geq \sup_{0 < \lambda < 1} \left\{\avg(V_1(K_1),V_1(L_1)) + \avg(V_1(K_2),V_1(L_2)) \right\}.
  		\end{split}
  		\end{equation}
  		
  		For each fixed $\lambda \in (0,1)$, we notice using the assumptions placed on $k,l$, we may write 
  		\begin{equation}\label{e:2lemma}
  		\begin{split}
  		&\!\!\!\!\!\!\!\!\!\!\!\!\!\!\avg(V_1(K_1),V_1(L_1)) + \avg(V_1(K_2),V_1(L_2))\\
  		&= \left[C_{p,\lambda,t} V_1(K_1)^{\alpha} + D_{p,\lambda,t}V_1(L_1)^{\alpha}\right]^{\frac{1}{\alpha}}+ \left[C_{p,\lambda,t} V_1(K_2)^{\alpha} + D_{p,\lambda,t}V_1(L_2)^{\alpha}\right]^{\frac{1}{\alpha}}\\ &=(V_1(K_1) + V_1(K_2)) \left[C_{p,\lambda,t}  + D_{p,\lambda,t}r^{\alpha}\right]^{\frac{1}{\alpha}}\\
  		&= \left[C_{p,\lambda,t} (V_1(K_1)+V_1(K_2))^{\alpha} + D_{p,\lambda,t}(V_1(L_1)+V_1(L_2))^{\alpha}\right]^{\frac{1}{\alpha}}\\
  		&= \avg(V_1(K),V_1(L)). 
  		\end{split}
  		\end{equation}
  		It follows from \eqref{e:1lemma} and \eqref{e:2lemma}, together with  the extreme point of the function $f(\lambda) = [(1-\lambda)^{\frac{1}{q}}r + \lambda^{\frac{1}{q}}s]^{\frac{1}{\alpha}}$ with $r,s >0$, on $(0,1)$ depending on the choice of $\alpha \leq 1$ \cite{RX, RX2} that (\ref{lpainequality})  holds for $N>2.$
  	Therefore based on  Steps 1 and 2, we obtain the desired result.
  		
  	\end{proof}
  	
  		Recall the definition of compression for a bounded Borel set $\Tilde{A} \subset \R^n \times \R_+$,
  	\begin{equation}\label{e:compression}
  	\Tilde{A} = \{(x,r) \in \R^n \times \R_+ \colon 0\leq r \leq V_1(A \cap (x + \R_+)), A \cap (x + \R_+) \neq \emptyset\}. 
  	\end{equation}
  	It is easy to check that the compression operator is volume invariant,  $V_{n+1}(A) = V_{n+1}(\Tilde{A})$.  In the following, we first  further analyze another  monotone properties under compression operation $A \mapsto \Tilde{A}$ in terms of power parameters, particularly,   for the $L_{p,\alpha}$-curvilinear summation for sets, i.e., the case when $\bar{\alpha} = (1,\dots,1,\alpha)$,  $+_{p,\alpha}, +^{p,\alpha}$, etc. Then based on this property, we will present the detailed proof of  $L_{p,\alpha}$-curvilinear-Brunn-Minkowski inequality for sets in $n+1$-dimensional setting.
  \begin{theorem}\label{compressionvolume} Let $p \geq 1$, $t \in (0,1)$, and $\alpha \in [-\infty,1]$. Let $K,L \subset \R^n \times \R_+$ be bounded Borel sets such that $V_{n+1}(K), V_{n+1}(L) > 0$. Then 
  	\[
  	V_{n+1}((1-t) \times_{p,\alpha}A +_{p,\alpha} t \times_{p,\alpha}B)\geq V_{n+1}((1-t) \times_{p,\alpha}\tilde{A} +_{p,\alpha} t \times_{p,\alpha} \tilde{B}).
  	\]
  	\end{theorem}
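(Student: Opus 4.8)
The plan is to reduce the $(n+1)$-dimensional statement to the one-dimensional Lemma~\ref{t:1dlemma} applied fiberwise over $\R^n$, exactly as in Uhrin's approach, but keeping track of the $L_p$ coefficients. First I would fix $\lambda \in (0,1)$ temporarily, since the operation $+_{p,\alpha}$ is a union over $\lambda$, and it suffices to compare the $\lambda$-slices. For a fixed $\lambda$, write $c = C_{p,\lambda,t}$, $d = D_{p,\lambda,t}$, and for $z \in \R^n$ set $z = c x + d y$ along the vertical fiber decomposition. The key observation is that for the $L_{p,\alpha}$-curvilinear summation with $\bar{\alpha}=(1,\dots,1,\alpha)$, the vertical fiber of $(1-t)\times_{p,\alpha}A +_{p,\alpha} t\times_{p,\alpha}B$ over a point $z$ contains the set $\{M_{p,\alpha}^{(t,\lambda)}(a,b) : (x,a)\in A,\ (y,b)\in B,\ z = cx+dy\}$, i.e. an $L_{p,\alpha}$-curvilinear combination (in dimension $1$) of the fibers $A_x := \{a : (x,a)\in A\}$ and $B_y := \{b : (y,b)\in B\}$. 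Therefore by Fubini,
\[
V_{n+1}\big((1-t)\times_{p,\alpha}A +_{p,\alpha} t\times_{p,\alpha}B\big) \geq \int_{\R^n} V_1\Big( (1-t)\times_{p,\alpha} A_x +_{p,\alpha} t\times_{p,\alpha} B_y \Big)\, dz,
\]
where the inner set on the right is taken over all valid decompositions $z = cx + dy$ and all $\lambda$.

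Next I would apply the one-dimensional inequality. For $z$ fixed, choosing any particular decomposition $z = cx_0 + dy_0$, Lemma~\ref{t:1dlemma} (after approximating the measurable fibers $A_{x_0}, B_{y_0}$ by finite unions of intervals, using inner regularity and the monotonicity of $V_1$ under such approximations) gives
\[
V_1\Big((1-t)\times_{p,\alpha} A_{x_0} +_{p,\alpha} t\times_{p,\alpha} B_{y_0}\Big) \geq M_{p\alpha}^t\big(V_1(A_{x_0}), V_1(B_{y_0})\big) = M_{p\alpha}^t\big(V_{A}(x_0), V_{B}(y_0)\big),
\]
where $V_A(x) = V_1(A\cap(x+\R_+))$ is precisely the set-segment function whose hypograph is $\tilde A$. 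Now the right-hand side, viewed as a function of $z$ obtained by optimizing over decompositions $z = cx+dy$ and over $\lambda$, is exactly the vertical fiber length of $(1-t)\times_{p,\alpha}\tilde A +_{p,\alpha} t\times_{p,\alpha}\tilde B$: indeed, by construction $\tilde A$ and $\tilde B$ are hypographs, so their $L_{p,\alpha}$-curvilinear combination has vertical fiber over $z$ equal to $\sup_{\lambda}\sup_{z=cx+dy} M_{p,\alpha}^{(t,\lambda)}(V_A(x), V_B(y))$ — this is the analogue of equation~\eqref{e:funvolume} in the $L_p$ setting. Integrating over $z \in \R^n$ and using $V_{n+1}(\tilde A) = V_{n+1}(A)$, $V_{n+1}(\tilde B) = V_{n+1}(B)$ implicitly only at the very end (it is not needed here, just the fiber identification) yields the claim.

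The main obstacle I anticipate is the careful justification that the fiber of the $L_{p,\alpha}$-curvilinear combination of $\tilde A$ and $\tilde B$ over $z$ is $\sup_{\lambda,\, z=cx+dy} M_{p,\alpha}^{(t,\lambda)}(V_A(x),V_B(y))$ rather than just containing it — i.e. that the supremal-convolution structure is respected exactly when both sets are already compressed (hypographs). This is where the hypothesis $A = \tilde A$, $B = \tilde B$ is \emph{not} assumed in Theorem~\ref{compressionvolume}: here we compare arbitrary $A,B$ on the left with their compressions on the right, so the left side is only bounded below by the fiberwise integral, while the right side is computed exactly via the hypograph identity. A secondary technical point is the measurability and approximation argument needed to invoke Lemma~\ref{t:1dlemma}, which only covers finite unions of intervals; one handles this by inner approximation of $A_x, B_y$ by compact sets and then by finite unions of intervals, passing to the limit with the aid of the continuity of $M_{p\alpha}^t$ and monotone convergence for the integral over $z$. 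The restriction $\alpha \in [-\infty,1]$ is exactly what makes Lemma~\ref{t:1dlemma} available, so no new case analysis in $\alpha$ is required beyond what that lemma already handles.
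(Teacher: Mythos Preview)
Your approach is essentially identical to the paper's: both reduce to Lemma~\ref{t:1dlemma} applied fiberwise, using that $\tilde A,\tilde B$ are hypographs so the fiber of their $L_{p,\alpha}$-combination over $z$ has length $\sup_{\lambda,\;z=C_{p,\lambda,t}x+D_{p,\lambda,t}y} M_{p,\alpha}^{(t,\lambda)}(V_A(x),V_B(y))$; the paper likewise first treats unions of boxes and then approximates, matching your inner-regularity remark.

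One point to tighten. After fixing a decomposition $z=C_{p,\lambda,t}x_0+D_{p,\lambda,t}y_0$ you invoke Lemma~\ref{t:1dlemma} on the full one-dimensional $L_{p,\alpha}$-combination $(1-t)\times_{p,\alpha}A_{x_0}+_{p,\alpha}t\times_{p,\alpha}B_{y_0}$. But that object is a union over \emph{all} $\lambda'\in(0,1)$, and for $\lambda'\neq\lambda$ the corresponding piece lies over the base point $C_{p,\lambda',t}x_0+D_{p,\lambda',t}y_0\neq z$, hence is not in the fiber over $z$ of the $(n{+}1)$-dimensional combination. What the argument actually needs is the single-$\lambda$ estimate
\[
V_1\bigl(C_{p,\lambda,t}\times_\alpha A_{x_0}+_\alpha D_{p,\lambda,t}\times_\alpha B_{y_0}\bigr)\ \ge\ M_{p,\alpha}^{(t,\lambda)}\bigl(V_A(x_0),V_B(y_0)\bigr),
\]
which is precisely the content of Step~1 in the proof of Lemma~\ref{t:1dlemma} before the final supremum over $\lambda$, and then to optimize over all admissible triples $(\lambda,x_0,y_0)$ with $z=C_{p,\lambda,t}x_0+D_{p,\lambda,t}y_0$. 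The paper's chain around \eqref{e:setrepresentation1}--\eqref{e:setrepresentation2} carries the same ambiguity (the $\bar x,\bar y$ there implicitly depend on $\lambda$), so once this is made precise your proof and the paper's coincide.
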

  \begin{proof}
  	  Based on the definition for compression in (\ref{e:compression}), for any set $C \subset \R^n \times \R_+$, denote the segment projection  on $\R^n$ as
  	\[
  	C(z) = \{r \geq 0 \colon (z,r) \in C \} = C \cap (z + \R_+), \quad z \in \R^n. 
  	\]
Firstly, we verify the statement for sets  $A,B \subset \R^n \times \R_+$ of the form
  	\[
  	A = \bigcup_{m =1}^MA_m, \quad B = \bigcup_{l =1}^L B_l,
  	\]
  	where the unions are almost disjoint, and  $A_m, B_l$ are rectangular boxes with sides parallel to the coordinate axes. More precisely, we can write
  	\[
  	A_m =\left(\Pi_{i=1}^n[a_{im},c_{im}]\right) \times [a,c], \quad B_l =\left(\Pi_{i=1}^n [b_{il},d_{il}]\right) \times [b,d]
  	\]
  	for appropriate choices of constants $a_{im}<c_{im}$, $b_{il}<d_{il}$ and $a,b,c,d \geq 0$. 
  	
  	Applying Fubini's theorem,  the volume of $n+1$-dimensional set can be interpreted as the integral with respect to the segment function, i.e.,
  	\begin{eqnarray*}
  	V_{n+1}((1-t) \times_{p,\alpha}A +_{p,\alpha} t \times_{p,\alpha}B)&=& \int_{\R^n}V_{[(1-t) \times_{p,\alpha}A +_{p,\alpha} t \times_{p,\alpha}B]}(z)dz;
\\
  	V_{n+1}((1-t) \times_{p,\alpha}\tilde{A} +_{p,\alpha} t \times_{p,\alpha}\tilde{B})&=& \int_{\R^n}V_{[(1-t) \times_{p,\alpha}\tilde{A}+_{p,\alpha} t \times_{p,\alpha}\tilde{B}]}(z)dz.
  	\end{eqnarray*}
  	Based on these two identities, it suffices to show the inequality related to the segment functions
  	\[
  	V_{[(1-t) \times_{p,\alpha}A +_{p,\alpha} t \times_{p,\alpha}B]}(z) \geq V_{[(1-t) \times_{p,\alpha}\tilde{A}+_{p,\alpha} t \times_{p,\alpha}\tilde{B}]}(z)
  	\]
  	holds for almost every $z \in \R^n$. 
  	
  	Observe by definition of $L_{p,\alpha}$ curvilinear summation in (\ref{e:LpUhrinoperation}) that
  	\begin{equation}\label{e:setrepresentation1}
  	\begin{split}
  [(1-t) \times_{p,\alpha}A +_{p,\alpha} t \times_{p,\alpha}B](z)
  	&= \{r \geq 0 \colon (z,r) \in (1-t) \times_{p,\alpha}A +_{p,\alpha} t \times_{p,\alpha}B\}\\
  	&= \left\{\avg(a,b) : \left(C_{p,\lambda,t}x + D_{p,\lambda,t}y, \avg(a,b) \right)  \text{ such that } \right. \\
  	&\left. z=C_{p,\lambda,t}x + D_{p,\lambda,t}y \text{ with } (x,a) \in A, (y,b) \in B, 0 < \lambda < 1 \right\} \\
  	&=\bigcup_{0 < \lambda < 1} \left\{\bigcup_{z = C_{p,\lambda,t}x + D_{p,\lambda,t}y}\left[C_{p,\lambda,t} \times_{\alpha} A(x) +_{\alpha} D_{p,\lambda,t} \times_{\alpha} B(y) \right] \right\}.
  	\end{split}
  	\end{equation}
  	Analogously, its compression version has the following expression
  	\begin{equation}\label{e:setrepresentation2}
  	\begin{split}
  	[(1-t) \times_{p,\alpha}\tilde{A} +_{p,\alpha} t \times_{p,\alpha}\tilde{B}](z)
  	&=\bigcup_{0 < \lambda < 1} \left\{\bigcup_{z = C_{p,\lambda,t}x + D_{p,\lambda,t}y}\left[C_{p,\lambda,t} \times_{\alpha} \tilde{A}(x) +_{\alpha} D_{p,\lambda,t} \times_{\alpha} \tilde{B}(y) \right] \right\}\\
  	&=\bigcup_{0 < \lambda < 1} \left\{\bigcup_{z = C_{p,\lambda,t}x + D_{p,\lambda,t}y}\left[C_{p,\lambda,t} \times_{\alpha} \tilde{A}(\bar{x}) +_{\alpha} D_{p,\lambda,t} \times_{\alpha} \tilde{B}(\bar{y}) \right]\right\}
  	\end{split}
  	\end{equation}
  	for some $\bar{x},\bar{y} \in \R^n$. 
  	
  	The fact that 
$
  	\widetilde{A(x)} = \tilde{A}(x),$ and $\widetilde{B(y)}= \tilde{B}(y),
 $
  	together with the representations \eqref{e:setrepresentation1}, \eqref{e:setrepresentation2}, and Lemma~\ref{t:1dlemma}, yields	
  	\begin{align*}
  	V_{[(1-t) \times_{p,\alpha}A +_{p,\alpha} t \times_{p,\alpha}B]}(z)
  	&\geq V_1(\bigcup_{0 < \lambda < 1}\left[C_{p,\lambda,t} \times_{\alpha} A(\bar{x}) +_{\alpha} D_{p,\lambda,t} \times_{\alpha} B(\bar{y}) \right])\\
  	&= V_1((1-t) \times_{p,\alpha} A(\bar{x}) +_{p,\alpha} t \times_{p,\alpha} B(\bar{y}))\\
  	&\geq \sup_{0 < \lambda < 1} \avg\left(V_1(\widetilde{A(\bar{x})}), V_1(\widetilde{B(\bar{y})})\right)\\
  	&=\sup_{0 < \lambda < 1} \avg(V_1(\tilde{A}(\bar{x})),V_1(\tilde{B}(\bar{y})))\\
  	&\geq V_1\Big(\bigcup_{0 < \lambda < 1}\left[C_{p,\lambda,t} \times_{\alpha} \tilde{A}(\bar{x}) +_{\alpha} D_{p,\lambda,t} \times_{\alpha} \tilde{B}(\bar{y}) \right]\Big)\\
  	&= V_{[(1-t) \times_{p,\alpha}\tilde{A} +_{p,\alpha} t \times_{p,\alpha}\tilde{B}]}(z). 
  	\end{align*}
  	
All in all, we see that 
  	\begin{align*}
  	V_{n+1}((1-t) \times_{p,\alpha}A +_{p,\alpha} t \times_{p,\alpha}B) &= \int_{\R^n}V_{[(1-t) \times_{p,\alpha}A +_{p,\alpha} t \times_{p,\alpha}B]}(z)dz\\
  	&\geq \int_{\R^n}V_{[(1-t) \times_{p,\alpha}\tilde{A} +_{p,\alpha} t \times_{p,\alpha}\tilde{B}]}(z)dz\\
  	&= V_1((1-t) \times_{p,\alpha}\tilde{A} +_{p,\alpha} t \times_{p,\alpha} \tilde{B}),
  	\end{align*}
  	as desired.
  	
  	\end{proof}
  
   	Next we will need the following H\"{o}lder type inequalities for the generalized averages with $L_p$ coefficients to prove the $L_{p,\alpha}$-curvilinear-Brunn-Minkowski inequality.  
  Let $p \geq 1$, $p^{-1}+q^{-1}=1$, $t, \lambda \in (0,1)$, and $a,b, c,d \geq 0$. Then
  \begin{equation}\label{e:lpholder}
  \begin{split}
  \avg(a,b) \avgb(c,d)
  \geq \begin{cases}
  \avgc(ac,bd), &\text{if } \alpha + \beta \geq 0,\\
  \min\left\{\left[C_{p,\lambda,t}\right]^{\frac{1}{\gamma}}ac,\left[D_{p,\lambda,t}\right]^{\frac{1}{\gamma}} bd \right\}, &\text{if } \alpha + \beta,\alpha \cdot \beta  <0, 
  \end{cases}
  \end{split}
  \end{equation}
  where $\gamma = (\alpha \beta)/(\alpha + \beta)$. 
  Here we give a brief proof of this inequality as follows. Assume that $\alpha + \beta \geq 0$, and $\alpha, \beta \neq 0$.  The case for $\alpha+\beta\geq0$ comes from \cite{RX2}.
  We now handle the case when $\alpha + \beta, \alpha \cdot \beta < 0$. Assume that 
  \[
  \left[C_{p,\lambda,t}\right]^{\frac{1}{\gamma}}ac \leq \left[D_{p,\lambda,t}\right]^{\frac{1}{\gamma}}bd,
  \]
  and set 
  \[
  x = \frac{\left[D_{p,\lambda,t}\right]^{\frac{1}{\alpha}}b}{\left[C_{p,\lambda,t}\right]^{\frac{1}{\alpha}}a}, \quad y= \frac{\left[D_{p,\lambda,t}\right]^{\frac{1}{\beta}}d}{\left[C_{p,\lambda,t}\right]^{\frac{1}{\beta}}c}.
  \]
  Then proving \eqref{e:lpholder} amounts to establishing the inequality 
  \[
  (1+x^{\alpha})^{\frac{1}{\alpha}}(1+y^{\beta})^{\frac{1}{\beta}} \geq 1
  \]
  for $x\cdot y \geq 1$, $\alpha + \beta, \alpha \cdot \beta <0$, which origins from  \cite[Page 296]{Uhrin}, as desired.
  
  	Based on the  monotone properties of compression for $L_p$-curvilinear summations above, next we present the  high dimensional $L_{p,\alpha}$-curvilinear-Brunn-Minkowski inequality, which extends both the classical curvilinear-Brunn-Minkowski inequality by Uhrin \cite{Uhrin2}, as well as, the $L_p$-Brunn-Minkowski inequality for non-convex sets proved by Lutwak, Yang, and Zhang \cite{LYZ}.  The proof is  processed through approximation which is also a typical method used in  the proof for the Brunn-Minkowski inequality (see \cite{Stein}). 
  	
  	\begin{theorem}\label{t:lpcurvfulld} Let $p \geq 1$, $\frac{1}{p} + \frac{1}{q}=1$, $t \in (0,1)$, and $\alpha \in (-\infty,\infty)$. Suppose that $A,B \subset \R^n \times \R_+$ are bounded Borel sets of positive measure such that 
  		$
  		A = \tilde{A},  B= \tilde{B}.
  		$
  		Then the following inequality holds:
  		\begin{equation}\label{t:lpcbmfull1}
  		\begin{split}
  		&\!\!\!\!\!\!\!\!\!V_{n+1}((1-t) \times_{p,\alpha} A +_{p,\alpha} t \times_{p,\alpha}B)\\
  		&\geq \begin{cases}
  		M_{p\gamma}^t(V_{n+1}(A),V_{n+1}(B)),&\text{if } \alpha \geq -\frac{1}{n},\\
  		\sup_{0< \lambda < 1}  \min\left\{\left[C_{p,\lambda,t}\right]^{\frac{1}{\gamma}}V_{n+1}(A),\left[D_{p,\lambda,t}\right]^{\frac{1}{\gamma}} V_{n+1}(B)\right\}, &\text{if } \alpha < - \frac{1}{n},
  		\end{cases}
  		\end{split}
  		\end{equation}
  		where $\gamma=\frac{\alpha}{1+n\alpha}$.
  	
  	\end{theorem}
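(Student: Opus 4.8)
The plan is to mimic Uhrin's induction-on-dimension argument, using the compression machinery already established. First I would reduce to the case where $A$ and $B$ are finite almost-disjoint unions of axis-parallel rectangular boxes; the general case of bounded Borel sets follows by an approximation argument (inner regularity, as in \cite{Stein}), since both sides of \eqref{t:lpcbmfull1} are continuous under such approximation and the compression operator is volume-invariant. For such ``box sets'' I would then induct on $n$. The base case $n=0$ (i.e.\ sets in $\R_+$) is exactly Lemma~\ref{t:1dlemma} applied with power $\alpha$, giving $V_1((1-t)\times_{p,\alpha}A+_{p,\alpha}t\times_{p,\alpha}B)\ge M_{p\alpha}^t(V_1(A),V_1(B))$, and here $\gamma=\alpha/(1+0\cdot\alpha)=\alpha$, so the claim holds.

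For the inductive step, I would write $\R^n\times\R_+=\R^{n-1}\times\R_+\times\R_+$ and slice in the first coordinate $x_1$. Because $A=\tilde A$ and $B=\tilde B$ and Theorem~\ref{compressionvolume} lets us replace any set by its compression without increasing the left side, I may assume each slice $A_{x_1}$, $B_{y_1}\subset\R^{n-1}\times\R_+$ is itself compressed in the last variable. By Fubini,
\[
V_{n+1}\bigl((1-t)\times_{p,\alpha}A+_{p,\alpha}t\times_{p,\alpha}B\bigr)=\int_{\R}V_n\Bigl(\bigl[(1-t)\times_{p,\alpha}A+_{p,\alpha}t\times_{p,\alpha}B\bigr]_{x_1}\Bigr)\,dx_1,
\]
and the slice at $z_1=C_{p,\lambda,t}x_1+D_{p,\lambda,t}y_1$ contains $C_{p,\lambda,t}\times_\alpha A_{x_1}+_\alpha D_{p,\lambda,t}\times_\alpha B_{y_1}$. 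Applying the inductive hypothesis in dimension $n-1$ to these slices gives a lower bound for the $V_n$ of the slice in terms of $M_{p\gamma'}^{?}$ of $V_n(A_{x_1})$ and $V_n(B_{y_1})$, where $\gamma'=\alpha/(1+(n-1)\alpha)$ handles the last $n-1$ ``unit'' powers together with $\alpha$. Then I would run a one-dimensional Brunn--Minkowski/Borell--Brascamp--Lieb step in the $x_1$-variable — this is where the $L_p$-H\"older inequality \eqref{e:lpholder} enters, combining the exponent $\gamma'$ coming from the slices with the exponent $1$ of the $x_1$-direction to produce the final exponent $\gamma=\gamma'/(1+\gamma')=\alpha/(1+n\alpha)$, together with the $M_\lambda\le1$ bound and optimization over $\lambda$ to pass from $M_{p,\gamma}^{(t,\lambda)}$ to $M_{p\gamma}^t$. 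The dichotomy $\alpha\ge-1/n$ versus $\alpha<-1/n$ is exactly the dichotomy $\gamma'+1\ge0$ versus $\gamma'+1<0$ (equivalently $\gamma'\cdot 1$ and $\gamma'+1$ both negative), so the two branches of \eqref{e:lpholder} give the two branches of the conclusion; in the second branch one keeps the $\sup_{0<\lambda<1}\min\{\cdots\}$ form rather than collapsing it.

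The main obstacle I expect is the bookkeeping in the inductive step: one must carefully track how the power parameter transforms from $\alpha$ at each one-dimensional slice to the aggregate $\gamma=\alpha/(1+n\alpha)$, and in particular verify that the threshold $\alpha\ge-1/n$ is preserved correctly through the recursion (at stage $k$ the relevant threshold is $\alpha\ge-1/k$, and the ``bad'' regime must not spuriously appear or disappear). A secondary technical point is justifying the slice-wise compression reduction: one needs that compressing $A$ in the last coordinate does not destroy the hypothesis that lets Fubini-slicing-and-inductive-hypothesis apply to the slices, which is where $\widetilde{A(x)}=\tilde A(x)$ and the representations \eqref{e:setrepresentation1}--\eqref{e:setrepresentation2} from the proof of Theorem~\ref{compressionvolume} are used. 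The approximation step at the very end is routine but must be stated, since Lemma~\ref{t:1dlemma} and the box-set induction only cover finite unions of boxes.
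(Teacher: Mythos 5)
Your plan is viable, but it is a genuinely different route from the one the paper actually takes. You propose an induction on the dimension: slice in the $x_1$-variable, apply the ($n-1$)-dimensional statement to the slices, and then run a one-dimensional step combining the exponent $\gamma'=\alpha/(1+(n-1)\alpha)$ with the exponent $1$ via the H\"older inequality \eqref{e:lpholder}, with Lemma~\ref{t:1dlemma} as the base case. The paper instead fixes the dimension and runs a Hadwiger--Ohmann induction on the \emph{number of boxes}: after reducing (by compression and density) to finite almost-disjoint unions of boxes of the form $A_m\times[0,a_m]$, the base case of a single pair of boxes is handled in one shot by Fubini, the classical Brunn--Minkowski inequality in $\R^n$ (giving the exponent $\beta=1/n$ for the bases), and a single application of \eqref{e:lpholder} with that $\beta$, which produces $\gamma=\alpha/(1+n\alpha)$ directly and treats both regimes $\alpha\geq-1/n$ and $\alpha<-1/n$ simultaneously; the inductive step cuts $A$ and $B$ by translated coordinate hyperplanes chosen so that the box counts drop and the volume ratios $V_{n+1}(A_i)/V_{n+1}(B_i)$ coincide, so that the $\avg$-means add up exactly. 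What each buys: the paper's argument imports the $n$-dimensional Brunn--Minkowski inequality and thereby avoids all iterated threshold bookkeeping, which is precisely the obstacle you flag (correctly) in your plan; your dimension induction is self-contained from the one-dimensional lemma but must track how $\gamma'$ and the threshold $-1/k$ evolve, and must also be careful about one point you gloss over: slicing the $L_{p,\alpha}$-sum at $z_1$ only yields, for each \emph{fixed} $\lambda$ and each decomposition $z_1=C_{p,\lambda,t}x_1+D_{p,\lambda,t}y_1$, the fixed-coefficient combination of the slices $A_{x_1}$, $B_{y_1}$ (the $\lambda$'s in the $x_1$-direction and in the remaining coordinates are coupled), and since $C_{p,\lambda,t}+D_{p,\lambda,t}\leq 1$ the inductive hypothesis in its $(1-t,t)$-normalized form does not apply verbatim; one must first factor out $M_{\lambda}=C_{p,\lambda,t}+D_{p,\lambda,t}$ and use the normalized weights $\theta_{\lambda}$ together with $M_{\lambda}\leq1$, exactly the device used in Lemma~\ref{t:1dlemma} and Proposition~\ref{t:properties}. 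With that adjustment and the threshold bookkeeping carried out (note $\alpha\geq-1/n$ automatically places the slices in the good regime $\alpha\geq-1/(n-1)$, so the ``bad'' branch only interferes when the final bound is itself of min type), your outline can be completed, but it is not the proof given in the paper.
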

  	
  	\begin{proof}
  		
  		To begin with, we consider  the box-type sets $A$ and $B$ with the form
  		\[
  		A = \bigcup_{m = 1}^M (A_m \times [0,a_m]), \quad B= \bigcup_{l=1}^L(B_l \times [0,b_l]),
  		\]
  		where the $A_m$, $B_l \subset \R^n$ are rectangular boxes with sides parallel to the coordinates axes, and the union are  almost disjoint when $\alpha\geq-\frac{1}{n}$.  The proof is similar to the 1-dimensional case  by induction on $N=M+L$ in two steps below.  
  		
  	\textbf{Step 1:} Assume that $N=2 \ (M=L=1)$.  Then $A$ and $B$ are of the form 
  		\[
  		A = \left(\prod_{i=1}^n[a_i,c_i]\right) \times [0,a], \quad B= \left(\prod_{i=1}^n[b_i,d_i]\right) \times [0,b].
  		\]
  		Notice that 
  		\begin{align*}
  		&\!\!\!\!\!\!\!\!\!(1-t) \times_{p,\alpha} A +_{p,\alpha} t \times_{p,\alpha}B\\
  		&=\bigcup_{0 < \lambda < 1} \left(\prod_{i=1}^n \big[C_{p,\lambda,t}a_i + D_{p,\lambda,t}b_i,C_{p,\lambda,t}c_i + D_{p,\lambda,t}d_i \big] \right) \times \left[0, \avg(a,b) \right]\\
  		&= \bigcup_{0 < \lambda <1}\left(C_{p,\lambda,t} \prod_{i=1}^n[a_i,c_i] + D_{p,\lambda,t} \prod_{i=1}^n[b_i,d_i] \right) \times \left[0, \avg(a,b) \right].
  		\end{align*}
  		
  		For each $\lambda \in (0,1)$, by applying Fubini's theorem and the  Brunn-Minkowski inequality in $\R^n$ (\ref{e:BM}), we obtain
  		\begin{align*}
  		&\!\!\!\!\!\!\!\!\!\!V_{n+1}\left(\left(C_{p,\lambda,t} \prod_{i=1}^n[a_i,c_i] + D_{p,\lambda,t} \prod_{i=1}^n[b_i,d_i] \right) \times \left[0, \avg(a,b) \right]\right)\\
  		&= V_{n}\left(\left(C_{p,\lambda,t} \prod_{i=1}^n[a_i,c_i] + D_{p,\lambda,t} \prod_{i=1}^n[b_i,d_i] \right)\right) \cdot V_1(\left[0, \avg(a,b) \right])\\
  		&\geq \avgb\left(V_{n}(\prod_{i=1}^n[a_i,c_i]), V_{n}(\prod_{i=1}^n[b_i,d_i])\right) \cdot \avg(a,b),
  		\end{align*}
  		with $\beta = \frac{1}{n}$. 
  		
  		Therefore, we have
  		\begin{align*}
  		V_{n+1}\left((1-t) \times_{p,\alpha} A +_{p,\alpha} t \times_{p,\alpha}B\right) \geq \sup_{0 < \lambda <1} \left\{\avgb\left(V_n(\prod_{i=1}^n[a_i,c_i]), V_n\big(\prod_{i=1}^n[b_i,d_i]\big)\right) \cdot \avg(a,b) \right\},
  		\end{align*}
  		with $\beta = \frac{1}{n}$. Applying the H\"older type inequality \eqref{e:lpholder} with the above choice of $\beta = \frac{1}{n}$ yields the result in this case. 
  		
  		\textbf{Step 2:} Assume that we have established the result in the case $M+L< N$ for some $N >2$.  Let $\bar{H}$ be a coordinate hyperplane in $\R^n$ and set $H = \bar{H} \times \R_+$. We cut $A$ and $B$ by translates $H+x$ and $H+y$ for some $x,y \in \R^n$. Denote by $(H + x)^{-}$, $(H+x)^+$ the closed half-spaces of $\R^n \times \R_+$ determined by $x$, and similarly define $(H+y)^-$ and $(H+y)^+$.  
  	Consider the sets 
 \begin{eqnarray*}
  		A_1 &= A \cap (H+x)^-, \quad A_2 = A \cap (H+x)^+,\\
  		B_1 &= B \cap (H+y)^-, \quad B_2 = B \cap (H+y)^+.
  		\end{eqnarray*}
  		Notice that 
  		\[
  		A = A_1 \cup A_2, \quad B = B_1 \cup B_2,
  		\]
  		where the unions are almost disjoint. 
  		Denote by $m_1,m_2$ and $l_1,l_2$ the numbers of rectangular boxes comprising $A_1,A_2$ and $B_1,B_2$, respectively.  We can select $\bar{H}$, $x$ and $y$ in such a way that 
  		\[
  		m_1 + l_1 \leq M+L -1, \quad m_2 + l_2 \leq M + L -1,
  		\]
  		and 
  		\[
  		\frac{V_{n+1}(A_1)}{V_{n+1}(B_1)} = 	\frac{V_{n+1}(A_2)}{V_{n+1}(B_2)} = r >0. 
  		\]
  		Notice 
  		\begin{align*}
  		&\!\!\!\!\!\!\!\!\!\!\!\!\!(1-t) \times_{p,\alpha}A +_{p,\alpha} t \times_{p,\alpha}B\\
  		&= \bigcup_{0 < \lambda <1} \left(\bigcup_{i,j=1}^2 \left\{\left(C_{p,\lambda,t}x + D_{p,\lambda,t}y, \avg(a,b) \right) \colon ((x,a),(y,b)) \in A_i \times B_j \right\}  \right)\\
  		&\supset \bigcup_{0 < \lambda <1} \left(\bigcup_{i=1}^2 \left\{\left(C_{p,\lambda,t}x + D_{p,\lambda,t}y, \avg(a,b) \right) \colon ((x,a),(y,b)) \in A_i \times B_i \right\} \right)\\
  		&= \bigcup_{i=1}^2 (1-t) \times_{p,\alpha}A_i +_{p,\alpha} t \times_{p,\alpha} B_i
  		\end{align*}
  		where the last union is almost disjoint. 
  		
  		By taking volumes, and using the inductive hypothesis, we see that 
  		\begin{align*}
  		&\!\!\!\!\!\!V_{n+1}((1-t) \times_{p,\alpha}A +_{p,\alpha} t \times_{p,\alpha}B) \\
  		&\geq V_{n+1}\left(\bigcup_{i=1}^2 (1-t) \times_{p,\alpha}A_i +_{p,\alpha} t \times_{p,\alpha} B_i\right)\\
  		&=\sum_{i=1}^2V_{n+1}((1-t) \times_{p,\alpha}A_i +_{p,\alpha} t \times_{p,\alpha} B_i)\\
  		&\geq  \begin{cases}
  		\sup_{0< \lambda < 1} \sum_{i=1}^2\avgc(V_{n+1}(A_i),V_{n+1}(B_i)), &\text{if } \alpha \geq -\frac{1}{n},\\
  		\sup_{0< \lambda < 1} \sum_{i=1}^2 \min\left\{\left[C_{p,\lambda,t}\right]^{\frac{1}{\gamma}}V_{n+1}(A_i),\left[D_{p,\lambda,t}\right]^{\frac{1}{\gamma}} V_{n+1}(B_i)\right\}, &\text{if } \alpha < - \frac{1}{n},
  		\end{cases}
  		\end{align*}
  		where $\gamma=\frac{\alpha}{1+n\alpha}$.
  		
  		For each $\lambda \in (0,1)$, the assumptions on $m_i,l_i$, $i=1,2$ yield, for $\alpha \geq -\frac{1}{n}$,
  		\begin{align*}
  		\sum_{i=1}^2\avgc(V_{n+1}(A_i),V_{n+1}(B_i)) &= (V_{n+1}(A_1)+V_{n+1}(A_2))\avgc(1,r)\\
  		&=\avgc\big(V_{n+1}(A_1)+V_{n+1}(A_2),V_{n+1}(B_1)+V_{n+1}(B_2)\big) \\
  		&= \avgc\big(V_{n+1}(A),V_{n+1}(B)\big).
  		\end{align*}
  		With the maximal point for $0<\lambda<1$, we obtain the inequality for $\alpha\geq -\frac{1}{n}$ in (\ref{t:lpcbmfull1}).
  		
  		A similar argument works for the case $\alpha < -\frac{1}{n}$. This completes the proof for sets which are comprised of unions of almost disjoint rectangular boxes in $\R^n$ and an interval in $\R_+$. 
  			To complete the proof, we use standard density considerations (see, for example, \cite{Stein, Uhrin}). 
  	\end{proof}
  	
  	Combining Theorems \ref{compressionvolume} and  \ref{t:lpcurvfulld}, we obtain the following corollary, which shows that,  Theorem~\ref{t:lpcurvfulld} is indeed a sharper form of the $L_p$-Brunn-Minkowski inequality for  sets before compression. 
  	
  	\begin{coro}
  		Let $p \geq 1$, $p^{-1}+q^{-1} = 1$, $t \in (0,1)$, and  $-\frac{1}{n} \leq \alpha \leq 1$. Then, for any pair of bounded Borel sets $A,B \subset \R^n \times \R_+$, each of positive volume, one has 
  		\begin{equation*}
  		\begin{split}
  		V_{n+1}((1-t) \times_{p,\alpha} A +_{p,\alpha} t \times_{p,\alpha} B)&\geq V_{n+1}((1-t) \times_{p,\alpha} \tilde{A} +_{p,\alpha} t \times_{p,\alpha} \Tilde{B})\\
  		&\geq M_{p\gamma}^t(V_{n+1}(A),V_{n+1}(B)),
  		\end{split}
  		\end{equation*}
  		where $\gamma = \frac{\alpha}{1+n\alpha}$.
  	\end{coro}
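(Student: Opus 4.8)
The plan is to obtain this corollary simply as the composition of the two main results of the section, Theorem~\ref{compressionvolume} and Theorem~\ref{t:lpcurvfulld}; no new estimate is required, only a check that the hypotheses transfer correctly.

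For the first inequality, I would observe that $-\tfrac1n\le\alpha\le1$ lies inside the admissible range $[-\infty,1]$ of Theorem~\ref{compressionvolume}, and that $A,B\subset\R^n\times\R_+$ are bounded Borel sets with $V_{n+1}(A),V_{n+1}(B)>0$; hence that theorem applies verbatim and yields exactly $V_{n+1}((1-t)\times_{p,\alpha}A+_{p,\alpha}t\times_{p,\alpha}B)\ge V_{n+1}((1-t)\times_{p,\alpha}\tilde A+_{p,\alpha}t\times_{p,\alpha}\tilde B)$.

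For the second inequality, I would apply Theorem~\ref{t:lpcurvfulld} with $\tilde A,\tilde B$ in the roles of $A,B$. This requires three small verifications. First, $\tilde A$ and $\tilde B$ are bounded Borel subsets of $\R^n\times\R_+$ of positive volume: they are hypographs of bounded integrable segment functions, and since the compression operator is volume invariant one has $V_{n+1}(\tilde A)=V_{n+1}(A)>0$ and $V_{n+1}(\tilde B)=V_{n+1}(B)>0$. Second, the fixed-point hypothesis $\widetilde{\tilde A}=\tilde A$, $\widetilde{\tilde B}=\tilde B$ holds because compression is idempotent: writing $\tilde A=\operatorname{hyp}(V_A)$ with $V_A(z)=V_1(A\cap(z+\R_+))$, the segment of $\tilde A$ over $z$ is the interval $[0,V_A(z)]$, whose length is again $V_A(z)$, so $\widetilde{\tilde A}=\operatorname{hyp}(V_A)=\tilde A$, and likewise for $B$. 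Third, since $\alpha\ge-\tfrac1n$ one is in the first branch of Theorem~\ref{t:lpcurvfulld}, which gives $V_{n+1}((1-t)\times_{p,\alpha}\tilde A+_{p,\alpha}t\times_{p,\alpha}\tilde B)\ge M_{p\gamma}^t(V_{n+1}(\tilde A),V_{n+1}(\tilde B))$ with $\gamma=\alpha/(1+n\alpha)$. Invoking volume invariance of compression once more to replace $V_{n+1}(\tilde A),V_{n+1}(\tilde B)$ by $V_{n+1}(A),V_{n+1}(B)$, and chaining with the first inequality, would complete the proof.

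I do not expect any genuine obstacle here: the substantive content sits in Theorems~\ref{compressionvolume} and~\ref{t:lpcurvfulld}, and the remaining points---idempotency and volume invariance of compression---are routine and already recorded in the text. The one remark worth making explicit is interpretive rather than technical: the resulting two-step chain exhibits Theorem~\ref{t:lpcurvfulld}, in the range $-\tfrac1n\le\alpha\le1$, as an a priori sharpening of the $L_p$-curvilinear-Brunn-Minkowski inequality that holds even without the normalization $A=\tilde A$, $B=\tilde B$.
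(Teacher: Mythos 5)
Your proposal is correct and is exactly the paper's argument: the corollary is obtained by combining Theorem~\ref{compressionvolume} (for the first inequality) with Theorem~\ref{t:lpcurvfulld} applied to $\tilde{A},\tilde{B}$, using idempotency and volume invariance of compression. Your explicit checks of the hypotheses (idempotency, positivity of volume, the range $-\tfrac1n\le\alpha\le1$) are routine details the paper leaves implicit, so nothing differs in substance.
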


  	Naturally, by repeating the above Theorem with respect to the multiple power parameters for $L_{p,\alpha}$-curvilinear-Brunn-Minkowski inequality,  we obtain the following theorem in terms of the $L_{p,\bar{\alpha}}$-curvilinear-Brunn-Minkowski inequality for sets with the vector power $\bar{\alpha} = (\alpha_1,\dots,\alpha_{n+1})$.

  	\begin{theorem}\label{t:lpUhrinBM}
  		Let $p \geq 1$, $t \in (0,1)$, and $\bar{\alpha} = (\alpha_1,\dots,\alpha_{n+1})$ with $\alpha_i \in (0,1]$ for each $i =1,\dots,n$.  Suppose that $A,B \subset (\R_+)^{n+1}$ are bounded Borel sets of positive measure such that $A = \tilde{A}$ and $B = \tilde{B}$. Then the following inequality holds:
  		\begin{equation*}\label{e:Uhrinlpbm}
  		\begin{split}
  		&\!\!\!\!V_{n+1}((1-t) \otimes_{p,\bar{\alpha}} A \oplus_{p,\bar{\alpha}} t \otimes_{p,\bar{\alpha}}B)\\
  		&\geq \begin{cases}
  		M_{p\gamma}^t(V_{n+1}(A),V_{n+1}(B)), &
  		\text{if } \alpha_{n+1} \geq -\left(\sum_{i=1}^n \alpha_i^{-1}\right)^{-1},\\
  		\sup_{0< \lambda < 1}  \min\left\{\left[C_{p,\lambda,t}\right]^{\frac{1}{\gamma}}V_{n+1}(A),\left[D_{p,\lambda,t}\right]^{\frac{1}{\gamma}} V_{n+1}(B)\right\}, &
  		\text{if } \alpha_{n+1} < -\left(\sum_{i=1}^n \alpha_i^{-1}\right)^{-1}, 
  		\end{cases}
  		\end{split}    
  		\end{equation*}
  		where 
  	$
  		\gamma =\left(\sum_{i=1}^{n+1}\alpha_i^{-1}\right)^{-1}.
  $
  	\end{theorem}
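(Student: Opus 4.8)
The plan is to deduce Theorem~\ref{t:lpUhrinBM} from the single-power result Theorem~\ref{t:lpcurvfulld} by iterating a dimension-reduction argument, one coordinate at a time. The key observation is that the $L_{p,\bar\alpha}$-curvilinear summation acts coordinatewise: in the first $n$ slots it uses the powers $\alpha_1,\dots,\alpha_n\in(0,1]$, and in the last slot the power $\alpha_{n+1}$. Since each $\alpha_i\le 1$ and $\alpha_i>0$, the $i$-th coordinate operation $M_{p,\alpha_i}^{(t,\lambda)}$ is exactly the $1$-dimensional $L_{p,\alpha_i}$-curvilinear combination studied in Lemma~\ref{t:1dlemma}, and this is the tool that will absorb one power $\alpha_i$ at a time while shifting the exponent in the Brunn--Minkowski estimate from $\beta$ to $(\beta^{-1}+\alpha_i^{-1})^{-1}$.

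First I would set up the induction on the number of ``active'' curvilinear coordinates. The base case is when only the last coordinate carries a nontrivial power: if $\bar\alpha=(1,\dots,1,\alpha_{n+1})$, the statement is precisely Theorem~\ref{t:lpcurvfulld} with $\alpha=\alpha_{n+1}$ and $n$ linear coordinates, giving exponent $\gamma=\frac{\alpha_{n+1}}{1+n\alpha_{n+1}}=(n+\alpha_{n+1}^{-1})^{-1}=\big(\sum_{i=1}^{n}1^{-1}+\alpha_{n+1}^{-1}\big)^{-1}$, consistent with the claimed formula. For the inductive step, suppose the result holds whenever $k-1$ of the first $n$ powers differ from $1$; given $\bar\alpha$ with $k$ nontrivial powers, isolate one of them, say $\alpha_1$. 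View $\R^{n+1}=\R_+\times\R^{n}$ by pulling out the first coordinate, and for $z_1\in\R_+$ let $A(z_1)$, $B(z_1)$ be the corresponding slices. Using the coordinatewise structure of $\oplus_{p,\bar\alpha}$ together with Fubini, one writes
\begin{align*}
&V_{n+1}\big((1-t)\otimes_{p,\bar\alpha}A\oplus_{p,\bar\alpha}t\otimes_{p,\bar\alpha}B\big)\\
&\quad=\int_{\R_+}V_{n}\Big(\bigcup_{0<\lambda<1}\big[C_{p,\lambda,t}\times_{\bar\alpha'}A(x_1)+_{\bar\alpha'}D_{p,\lambda,t}\times_{\bar\alpha'}B(y_1)\big]\Big)\,d\big(M_{p,\alpha_1}^{(t,\lambda)}(x_1,y_1)\big),
\end{align*}
where $\bar\alpha'=(\alpha_2,\dots,\alpha_{n+1})$ is the reduced power vector, and then applies the inductive hypothesis on each $n$-dimensional slice to bound the inner volume by the appropriate $M_{p\gamma'}^t$ average of $V_n(A(x_1))$ and $V_n(B(y_1))$ with $\gamma'=(\sum_{i=2}^{n+1}\alpha_i^{-1})^{-1}$. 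The outer integral is then a $1$-dimensional curvilinear integral with power $\alpha_1$, and Lemma~\ref{t:1dlemma} (applied to the ``volume profiles'' $x_1\mapsto V_n(A(x_1))$, which are handled via compression exactly as in the proof of Theorem~\ref{compressionvolume}) combines the exponents through the H\"older-type inequality \eqref{e:lpholder}: $M_{p\alpha_1}^t$ composed with $M_{p\gamma'}^t$ produces $M_{p\gamma}^t$ with $\gamma^{-1}=\alpha_1^{-1}+\gamma'^{-1}=\sum_{i=1}^{n+1}\alpha_i^{-1}$, which is the claimed exponent. The threshold condition $\alpha_{n+1}\ge-(\sum_{i=1}^n\alpha_i^{-1})^{-1}$ is exactly the condition that this accumulated $\gamma$ stays positive (equivalently $\sum_{i=1}^{n+1}\alpha_i^{-1}>0$), and in the complementary regime the $\min$-type bound propagates through the same composition, since \eqref{e:lpholder} already records both branches.

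The main obstacle I anticipate is the bookkeeping at the inductive step: one must justify that slicing and reassembling respects the hypotheses $A=\tilde A$, $B=\tilde B$ after each coordinate is peeled off, i.e.\ that the slices $A(x_1)$ inherit the needed monotone/compressed structure so that Lemma~\ref{t:1dlemma} and the slice-wise induction actually apply. This is where the compression machinery of Theorem~\ref{compressionvolume} and Proposition~\ref{t:properties} must be invoked carefully, possibly requiring that compression be performed in a fixed order of coordinates and that the restriction $\alpha_i\in(0,1]$ (which guarantees $M_\lambda\le 1$ and the correct direction of the Minkowski/H\"older inequalities) be used at every stage. Once the slicing is organized correctly, the exponent arithmetic $\gamma^{-1}=\sum\alpha_i^{-1}$ and the two-case dichotomy fall out mechanically from \eqref{e:lpholder}, so the whole argument reduces to Theorem~\ref{t:lpcurvfulld} plus $n$ applications of the one-dimensional lemma; alternatively, one can simply remark that the $n+1$-fold iteration of the proof of Theorem~\ref{t:lpcurvfulld} with the coordinatewise powers $\alpha_i$ in place of the single power yields the statement directly, which is the route the phrase ``by repeating the above Theorem with respect to the multiple power parameters'' suggests.
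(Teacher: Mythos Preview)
Your proposal is correct and in line with the paper's own treatment, which provides no detailed argument and simply states that the result follows ``by repeating the above Theorem with respect to the multiple power parameters''; the route you describe at the end---rerunning the box-and-hyperplane-cut proof of Theorem~\ref{t:lpcurvfulld} with $\alpha_i$ in the $i$-th slot and applying the iterated H\"older inequality \eqref{e:lpholder} so that $\gamma^{-1}=\sum_i\alpha_i^{-1}$---is exactly what is intended. One caution on your more detailed route (a): the displayed Fubini identity is only heuristic as written, because the parameter $\lambda$ in the definition of $\oplus_{p,\bar\alpha}$ is common to all $n+1$ coordinates and cannot be split into an outer $\alpha_1$-integral and an independent inner union over $\lambda$; the clean execution is to fix $\lambda$ first, bound each factor of a single box by $M_{p,\alpha_i}^{(t,\lambda)}$ via the Minkowski-type estimate behind Lemma~\ref{t:1dlemma}, multiply, apply \eqref{e:lpholder} $n$ times, and only then take the supremum in $\lambda$---after which the induction on the number of boxes and the density argument carry over verbatim from Theorem~\ref{t:lpcurvfulld}.
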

  	
  	\begin{remark}
  		  If $\bar{\alpha}=(1,\cdots,1)$, the $L_{p,\bar{\alpha}}$-curvilinear summation recovers the $L_p$ Minkowski summation by Lutwak, Yang and Zhang \cite{LYZ} and the above $L_{p,\bar{\alpha}}$-curvilinear-Brunn-Minkowski inequality goes back to the $L_p$ Brunn-Minkowski inequality for sets.
  		\end{remark}

  	\subsection{Normalized  $L_p$-curvilinear-Brunn-Minkowski inequalities}\label{normalcbm}
  	In the following we will examine the normalized version of the $L_{p,\bar{\alpha}}$-curvilinear-Brunn-Minkowski inequality in previous Subsection.

  Although all results in this section may be formulated for general bounded Borel sets in $\R^n \times \R_+$, due to standard density considerations, it suffices to establish all results for compact $F_{\sigma}$-sets. To this end, we assume that $A,B \subset \R^n \times \R_+$  of the form
  \begin{equation*}\label{e:fsigma}
  A= \bigcup_{m=1}^M (\bar{A}_m \times A_m), \quad  B = \bigcup_{l=1}^M (\bar{B}_l \times B_l), \quad M \leq \infty,
  \end{equation*}
  where $\bar{A}_m, \bar{B}_m \subset \R^n$ and $A_m, B_m \subset \R_+$ are compact sets for all $m,l$. Moreover, we have the following  identities by definitions of $L_{p,\alpha}$ operations
  \begin{equation*}\label{e:fsigmaidt}
  \begin{split}
  &\lpcurv
  = \bigcup_{m,l=1}^M \bigcup_{\lambda \in (0,1) \cap \Q} \big(C_{p,\lambda,t} \times_{\alpha}(\bar{A}_m \times A_m) +_{\alpha} D_{p,\lambda,t}\times_{\alpha} (\bar{B}_l \times B_l)\big);\\
  &\lpmin=  \bigcap_{m,l=1}^M \bigcup_{\lambda \in (0,1) \cap \Q} \big(C_{p,\lambda,t} \times^{\alpha}(\bar{A}_m \times A_m) +^{\alpha} D_{p,\lambda,t}\times^{\alpha} (\bar{B}_l \times B_l)\big),
  \end{split}
  \end{equation*}
  where $\mathbb{Q}$ denotes the set of all rational numbers.
   It is not difficult to see that each $C_{p,\lambda,t} \times_{\alpha}(\bar{A}_m \times A_m) +_{\alpha} D_{p,\lambda,t}\times_{\alpha} (\bar{B}_l \times B_l)$ and each $C_{p,\lambda,t} \times^{\alpha}(\bar{A}_m \times A_m) +^{\alpha} D_{p,\lambda,t}\times^{\alpha} (\bar{B}_l \times B_l)$ are compact sets, and consequently, $\lpcurv$ and $\lpmin$ are bounded $F_{\sigma}$-subsets of $\R^n \times \R_+$.

  Next we consider the generalized section function and the corresponding compression.
  Let $\bar{H} = H \times \R_+$ for  $H \subset G_{n,k}$ when $k\in\{0,1,\cdots, n\}$. Fix any bounded Borel measurable subset $A \subset \R^n \times \R_+$, and the generalized segment function with respect to $H$ on $H^{\perp},$ $V_{A,H} \colon \h \to \R_+$ is defined by 
  \begin{equation*} \label{e:subspacefunctional}
  V_{A,H}(y) = V_{k+1}(A \cap (\bar{H}+y)).
  \end{equation*}
  When $k =0$, $H=\{o\}$, and $\h = \R^n$,  $V_{A,H}(\cdot)$ recovers $V_A(\cdot)$, the classical segment function. 
  It is easy to see that the function $V_{A,H}$ is integrable and bounded. 
  
  Denote the {\it optimal norm $\|V_{A,H}\|_{\infty}$, normalized super-level set $C_r(V_{A,H})$ for $V_{A,H}$, and the normalized compression $A_H$}, correspondingly as  \begin{equation}\label{e:criticalquantities}
  \begin{split}
  &\|V_{A,H}\|_{\infty} = \sup_{y \in \h} V_{A,H}(y),\\
  &C_r(V_{A,H}) = \left\{y \in \h \colon V_{A,H}(y) \geq r \|V_{A,H}\|_{\infty} \right\}, \quad 0 \leq r \leq 1,\\
  &A_H=\left\{(y,r) \in \h \times \R_+ \colon 0\leq r \leq \frac{V_{A,H}(y)}{\|V_{A,H}\|_{\infty}}, \ A \cap(\bar{H} + y) \neq \emptyset \right\}. 
  \end{split}
  \end{equation}
  When $k=0$,   we denote the above as $\|V_{A_0}\|$, $C_r(V_{A_0})$ and $A_0$, respectively.

  	\begin{prop}\label{t:refinementfulld}
  		Let $p \geq 1$, $p^{-1}+q^{-1}=1$, $t \in (0,1)$, and $\alpha \in (-\infty,\infty)$. Then, for any bounded $F_{\sigma}$-sets $A,B \subset \R^n \times \R_+$ of positive volume,
  		\begin{equation*}\label{e:refinement1}
  		\begin{split}
  		V_{n+1}((1-t) \times_{p,\alpha} A_0 +_{p,\alpha} t \times_{p,\alpha} B_0)&\geq V_{n+1}((1-t) \times_{p,-\infty} A_0 +_{p,-\infty} t \times_{p, -\infty} B_0)\\
  		&\geq \int_0^1 V_{n}((1-t) 
  		\cdot_p C_r\left(V_{A_0}\right)  +_p t \cdot_p C_r\left(V_{B_0}\right))dr.
  		\end{split}
  		\end{equation*}
  	\end{prop}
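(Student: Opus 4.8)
The plan is to establish the two inequalities of the display separately: the first is the monotonicity of the $L_{p,\alpha}$-curvilinear summation in the power $\alpha$, and the second reduces, by Fubini's theorem in the vertical variable, to the very definition of the $L_p$-Minkowski combination of the normalized super-level sets.

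For the first inequality I would invoke Proposition~\ref{t:properties}(1). By \eqref{e:criticalquantities} with $k=0$, the normalized compression $A_0$ is, up to a $V_{n+1}$-null set, the hypograph of the function $x \mapsto V_{A_0}(x)/\|V_{A_0}\|$; being a hypograph it coincides with its own compression, $\widetilde{A_0} = A_0$, and $V_{n+1}(A_0) = V_{n+1}(A)/\|V_{A_0}\| > 0$ by Fubini's theorem; the same holds for $B_0$. Thus the hypotheses of Proposition~\ref{t:properties}(1) are met, and since any finite $\alpha$ satisfies $-\infty < \alpha$, that proposition applied with $\beta = -\infty$ gives the inclusion
\[
(1-t) \times_{p,-\infty} A_0 +_{p,-\infty} t \times_{p,-\infty} B_0 \ \subseteq\ (1-t) \times_{p,\alpha} A_0 +_{p,\alpha} t \times_{p,\alpha} B_0 ,
\]
and monotonicity of Lebesgue measure yields the first inequality at once.

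For the second inequality I would slice the set $E := (1-t) \times_{p,-\infty} A_0 +_{p,-\infty} t \times_{p,-\infty} B_0$ along its last coordinate. After the $F_{\sigma}$/density reduction recorded above, $E$ is a bounded measurable subset of $\R^n \times \R_+$, so Tonelli's theorem gives $V_{n+1}(E) = \int_0^{\infty} V_n(E_r)\,dr$, where $E_r := \{z \in \R^n : (z,r) \in E\}$. The crux is the identification of $E_r$. Since $M_{p,\alpha_{n+1}}^{(t,\lambda)}(a,b)$ degenerates to $\min\{a,b\}$ when $\alpha_{n+1} = -\infty$, independently of $\lambda$, and since the vertical section of $A_0$ over $x$ is exactly the interval $\big[0,\,V_{A_0}(x)/\|V_{A_0}\|\big]$ (and similarly for $B_0$), unwinding the definition of $\times_{p,-\infty}$ shows that, for $0 < r < 1$,
\[
E_r = \left\{ C_{p,\lambda,t}x + D_{p,\lambda,t}y \colon x \in C_r(V_{A_0}),\ y \in C_r(V_{B_0}),\ 0 < \lambda < 1 \right\} = (1-t) \cdot_p C_r(V_{A_0}) +_p t \cdot_p C_r(V_{B_0}),
\]
while $E_r = \emptyset$ for $r > 1$. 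Indeed, a point $(z,r) \in E$ forces $z = C_{p,\lambda,t}x + D_{p,\lambda,t}y$ with $r = \min\{a,b\}$ for some $(x,a) \in A_0$, $(y,b) \in B_0$, whence $r \le a \le V_{A_0}(x)/\|V_{A_0}\|$ and $r \le b \le V_{B_0}(y)/\|V_{B_0}\|$, i.e.\ $x \in C_r(V_{A_0})$ and $y \in C_r(V_{B_0})$; conversely any such $z$ is realized by choosing $a = b = r$. Integrating over $r \in (0,1)$ then gives
\[
V_{n+1}(E) = \int_0^1 V_n\big((1-t) \cdot_p C_r(V_{A_0}) +_p t \cdot_p C_r(V_{B_0})\big)\,dr ,
\]
an equality refining the asserted inequality.

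I expect the slice identification to be the only genuinely delicate point: one must bear in mind that once the minimum has been taken the $L_p$-coefficients no longer touch the vertical coordinate, that the union over $\lambda$ in $E$ is precisely the union defining the planar $L_p$-Minkowski combination $+_p$, and that the normalization built into $A_0$ and $B_0$ is what confines the vertical range to $(0,1)$ and forces the level $r$ to correspond to the super-level set $C_r$. Measurability of $E$ and of the (analytic) slices $E_r$, needed to apply Tonelli, is subsumed in the standard $F_{\sigma}$/density considerations employed throughout this section, and the first inequality is then immediate from Proposition~\ref{t:properties}(1).
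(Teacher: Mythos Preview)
Your argument is correct and essentially mirrors the paper's: the first inequality via Proposition~\ref{t:properties}(1), the second via Fubini together with the $\min$-structure of the $\alpha=-\infty$ combination. You slice horizontally in $r$ and obtain the slice identity $E_r=(1-t)\cdot_p C_r(V_{A_0})+_p t\cdot_p C_r(V_{B_0})$ directly (hence equality in the second step), whereas the paper slices vertically in $z$, computes the section function, and then passes to super-level sets via an inclusion before applying the layer-cake formula---the same ingredients in a marginally less direct order.
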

  	
  	\begin{proof}
  		All of the sets $A_0,B_0, C_r(V_{A_0})$, and $C_r(V_{B_0})$ are $F_{\sigma}$-sets, and $A_0, B_0$ satisfy the compression condition 
  		\[
  		A_0 = \tilde{A}_0, \quad B_0 = \tilde{B}_0. 
  		\]
  Let $z \in \R^n$ be such that $[(1-t) \times_{p,-\infty} A_0 +_{p,-\infty} t \times_{p,-\infty} B_0]
  		\cap (\R_+ \cap z) \neq \emptyset$. Then we see that		
  		\begin{align*}
  		&\!\!\!\!\!\!\!\!\![(1-t) \times_{p,-\infty} A_0 +_{p,-\infty} t \times_{p,-\infty} B_0] \cap(\R_+ + z) \\
  		&=[(1-t) \times_{p,-\infty} \tilde{A}_0 +_{p,-\infty} t \times_{p,-\infty} \tilde{B}_0] \cap (\R_+ + z)\\
  		&= \left(\bigcup_{\lambda \in \Q \cap (0,1)} C_{p,\lambda,t} \times_{-\infty} \tilde{A}_0 +_{-\infty} D_{p,\lambda,t}\times_{-\infty} \tilde{B}_0\right)\cap (\R_+ + z)\\
  		&=\bigcup_{\lambda \in \Q \cap (0,1)}\left\{\left[C_{p,\lambda,t} \times_{-\infty} \tilde{A}_0 +_{-\infty} D_{p,\lambda,t} \times_{-\infty} \tilde{B}_0\right] \cap (\R_+ + z) \right\}\\
  		&=\bigcup_{\lambda \in \Q \cap (0,1)} \left[\left\{\left(C_{p,\lambda,t}x + D_{p,\lambda,t}y,\min\{a,b\} \right) \colon (x,a) \in \tilde{A}_0, (y,b) \in \tilde{B}_0\right\}\cap (\R_+ + z)\right] \\
  		&=\bigcup_{\lambda \in \Q \cap (0,1)} \bigcup_{z = C_{p,\lambda,t}x + D_{p,\lambda,t}y}  \left\{\left(z,0 \right) \times \left[0,\min\left\{\frac{V_{A_0}(x)}{\|V_{A_0}\|_{\infty}}, \frac{V_{B_0}(y)}{\|V_{B_0}\|_{\infty}} \right\} \right]   \right\}.
  		\end{align*}

  		By taking volumes, we see that, for each such $z \in \R^n$,
  		\begin{align*}
  		&\!\!\!\!\!\!\!\!\!V_1([(1-t) \times_{p,-\infty} A_0 +_{p,-\infty} t \times_{p,-\infty} B_0] \cap(\R_+ + z)) \\
  		&=\sup_{0 < 
  			\lambda <1} \left[ \sup_{z = C_{p,\lambda,t}x + D_{p,\lambda,t}y} \min\left\{\frac{V_{A_0}(x)}{\|V_{A_0}\|_{\infty}},\frac{V_{B_0}(y)}{\|V_{B_0}\|_{\infty}}  \right\}\right],
  		\end{align*}
  		which implies the inclusion relationship 
  		\begin{equation}\label{e:inclusion1}
  		\begin{split}
  		C_r \left(V_{(1-t) \times_{p,-\infty} A_0 +_{p,-\infty} t \times_{p,-\infty} B_0} \right) \supset (1-t) 
  		\cdot_p C_r\left(V_{A_0}\right)  +_p t \cdot_p C_r\left(V_{B_0}\right) .
  		\end{split}
  		\end{equation}
  	Fubini's theorem together with the inclusion \eqref{e:inclusion1} yields
  		\begin{align*}
  		&\!\!\!\!\!\!\!\!\!V_{n+1}((1-t) \times_{p,-\infty} A_0 +_{p,-\infty} t \times_{p,-\infty} B_0)\\
  		&=\int_{\R^n} V_1(\left[(1-t) \times_{p,-\infty} A_0 +_{p,-\infty} t \times_{p,-\infty} B_0\right] \cap (\R_+ + z))dz\\
  		&= \int_0^1 V_n\left(C_r \left(V_{(1-t) \times_{p,-\infty} A_0 +_{p,-\infty} t \times_{p,-\infty} B_0} \right)\right)dr\\
  		&\geq \int_0^1 V_n\left((1-t) 
  		\cdot_p C_r\left(V_{A_0}\right)  +_p t \cdot_p C_r\left(V_{B_0}\right)  \right)dr.
  		\end{align*}
  		Finally, utilizing the monotone properties in  Proposition~\ref{t:properties}  (1), we have 
  		\begin{align*}
  		V_{n+1}((1-t) \times_{p,\alpha} A_0 +_{p,\alpha} t \times_{p,\alpha} B_0)&\geq  V_{n+1}((1-t) \times_{p,-\infty} A_0 +_{p,-\infty} t \times_{p, -\infty} B_0)\\
  		&\geq \int_0^1 V_n((1-t) 
  		\cdot_p C_r\left(V_{A_0}\right)  +_p t \cdot_p C_r\left(V_{B_0}\right)) dr,
  		\end{align*}		
  		completing the proof.
  	\end{proof} 	
 Next we will obtain the following {\it normalized} $L_{p,\bar{\alpha}}$-curvilinear-Brunn-Minkowski inequality for sets when $\bar{\alpha}=(1,\cdots,1,\alpha)$. 
  	
  	\begin{theorem}\label{t:normalizedlp}
  		Let $p \geq 1$, $p^{-1} + q^{-1} =1$,  and $t \in (0,1)$. Suppose that $A,B \subset \R^n \times \R_+$ are bounded $F_{\sigma}$-sets having positive volume such that $A = \tilde{A}$ and $B = \tilde{B}$. Then, for any $\alpha \in (-\infty,\infty)$, 
  		\begin{align*}
  		&\!\!\!\!\!\!\!\!\!V_{n+1}(\lpcurv) \cdot M_{-p\alpha}^{t}\left(\|V_A\|_{\infty}^{-1},\|V_B\|_{\infty}^{-1} \right)\\
  		&\geq \int_0^1 V_{n}((1-t) 
  		\cdot_p C_r\left(V_{A}\right)  +_p t \cdot_p C_r\left(V_{B}\right)) dr.
  		\end{align*}	
  	\end{theorem}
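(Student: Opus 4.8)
The plan is to reduce the normalized inequality to Proposition~\ref{t:refinementfulld} by a vertical rescaling, working throughout --- by the density/$F_\sigma$ reductions stated just before the theorem --- with $F_\sigma$-sets of the standard form, so that all the slicing identities below are exact. Write $\Phi=\|V_A\|_\infty$, $\Psi=\|V_B\|_\infty$ and $M:=M_{p\alpha}^t(\Phi,\Psi)$, so that $M_{-p\alpha}^t(\Phi^{-1},\Psi^{-1})=1/M$. Since $M_{p,\alpha}^{(t,\lambda)}(ca,cb)=c\,M_{p,\alpha}^{(t,\lambda)}(a,b)$ for every $c>0$ (with the evident reading in the limiting cases $\alpha=0,\pm\infty$), the vertical dilation $D_c\colon(x,r)\mapsto(x,cr)$ commutes with $\times_{p,\alpha}$, whence
\[
V_{n+1}(\lpcurv)\cdot M_{-p\alpha}^t(\Phi^{-1},\Psi^{-1})
= V_{n+1}\big((1-t)\times_{p,\alpha}(D_{1/M}A)+_{p,\alpha}t\times_{p,\alpha}(D_{1/M}B)\big).
\]
Moreover $D_{1/M}A$ and $D_{1/M}B$ have the same normalized compressions $A_0,B_0$ as $A,B$, and dividing a segment function by its own supremum does not change its normalized super-level sets, so $C_r(V_{D_{1/M}A})=C_r(V_A)$ and $C_r(V_{D_{1/M}B})=C_r(V_B)$ for all $r\in(0,1]$.

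Next, apply Proposition~\ref{t:refinementfulld} to $A_0,B_0$: using $C_r(V_{A_0})=C_r(V_A)$ and $C_r(V_{B_0})=C_r(V_B)$, this gives
\[
V_{n+1}\big((1-t)\times_{p,\alpha}A_0+_{p,\alpha}t\times_{p,\alpha}B_0\big)\ \geq\ \int_0^1 V_n\big((1-t)\cdot_p C_r(V_A)+_p t\cdot_p C_r(V_B)\big)\,dr,
\]
which is exactly the right-hand side of the theorem. Therefore it suffices to establish
\[
V_{n+1}\big((1-t)\times_{p,\alpha}(D_{1/M}A)+_{p,\alpha}t\times_{p,\alpha}(D_{1/M}B)\big)\ \geq\ V_{n+1}\big((1-t)\times_{p,\alpha}A_0+_{p,\alpha}t\times_{p,\alpha}B_0\big);
\]
in words, replacing the heights normalized \emph{separately} to $1$ by the heights $\Phi/M$ and $\Psi/M$ must not decrease the volume of the curvilinear sum.

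This last comparison is proved by Fubini slicing in the $\R_+$-direction, exactly as in the proof of Proposition~\ref{t:refinementfulld}. Since $A_0=\tilde A_0$ and $B_0=\tilde B_0$, the fibre of the left-hand set over $z$ has length $w(z)=\sup_{0<\lambda<1}\sup_{z=C_{p,\lambda,t}x+D_{p,\lambda,t}y}M_{p,\alpha}^{(t,\lambda)}\big(\tfrac{\Phi}{M}V_{A_0}(x),\tfrac{\Psi}{M}V_{B_0}(y)\big)$; one writes $V_{n+1}=\int_0^\infty V_n(\{w\geq s\})\,ds$, notes that whenever $z=C_{p,\lambda,t}x+D_{p,\lambda,t}y$ with $x\in C_r(V_A)$ and $y\in C_r(V_B)$ one has $w(z)\geq \tfrac{r}{M}\,M_{p,\alpha}^{(t,\lambda)}(\Phi,\Psi)$ by monotonicity and homogeneity of $M_{p,\alpha}^{(t,\lambda)}$, and then optimizes the parameter $\lambda$ using the quasilinearization identity $\sup_\lambda M_{p,\alpha}^{(t,\lambda)}(\Phi,\Psi)=M$; this produces super-level sets of $w$ containing $L_p$-Minkowski combinations of $C_r(V_A)$ and $C_r(V_B)$, and integrating in $s$ with the change of variables $s=r$ returns $\int_0^1 V_n((1-t)\cdot_p C_r(V_A)+_p t\cdot_p C_r(V_B))\,dr$, i.e. the right-hand side again by Proposition~\ref{t:refinementfulld}. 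The step I expect to be the main obstacle is precisely this optimization: the parameter $\lambda$ governing the vertical $\alpha$-mean is coupled, through the decomposition $z=C_{p,\lambda,t}x+D_{p,\lambda,t}y$, to the $L_p$-Minkowski coefficients in the horizontal direction, and the $\lambda$ that maximizes $M_{p,\alpha}^{(t,\lambda)}(\Phi,\Psi)$ need not be among those sweeping out the horizontal $L_p$-combination of the level sets; arranging that the \emph{full} combination $(1-t)\cdot_p C_r(V_A)+_p t\cdot_p C_r(V_B)$ (and not merely one of its constituents, nor the weaker constant $\min\{(1-t)^{1/p\alpha}\|V_A\|_\infty,\ t^{1/p\alpha}\|V_B\|_\infty\}$) appears on the right with the sharp normalizing factor $M=M_{p\alpha}^t(\|V_A\|_\infty,\|V_B\|_\infty)$ is where essentially all of the care --- and the heaviest use of the $F_\sigma$/density reductions and the exact slicing identities --- is concentrated.
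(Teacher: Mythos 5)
Your opening reduction is fine: with $M:=M_{p\alpha}^t(\|V_A\|_\infty,\|V_B\|_\infty)$ one indeed has $M_{-p\alpha}^t(\|V_A\|_\infty^{-1},\|V_B\|_\infty^{-1})=1/M$, the vertical dilation $D_{1/M}$ commutes with $\times_{p,\alpha}$, and $C_r(V_{D_{1/M}A})=C_r(V_A)$. But the proof is not complete, and the step you yourself flag as ``the main obstacle'' is exactly where it breaks. For a point $z$ of the full combination $(1-t)\cdot_p C_r(V_A)+_p t\cdot_p C_r(V_B)$ you only control $w(z)$ through the particular $\lambda$'s for which $z=C_{p,\lambda,t}x+D_{p,\lambda,t}y$ with $x\in C_r(V_A)$, $y\in C_r(V_B)$, and for those $\lambda$'s your bound reads $w(z)\ge \frac{r}{M}\,M_{p,\alpha}^{(t,\lambda)}(\|V_A\|_\infty,\|V_B\|_\infty)$. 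You cannot then ``optimize $\lambda$'': for $\alpha>0$ and $p>1$ one has $\sup_\lambda M_{p,\alpha}^{(t,\lambda)}(\|V_A\|_\infty,\|V_B\|_\infty)=M$ with a unique maximizing $\lambda$, so the per-$\lambda$ inequality $M_{p,\alpha}^{(t,\lambda)}(\|V_A\|_\infty,\|V_B\|_\infty)\ge M$ that your inclusion $\{w\ge r\}\supset(1-t)\cdot_p C_r(V_A)+_p t\cdot_p C_r(V_B)$ would require is simply false at the $\lambda$'s you are forced to use. Hence the layer-cake computation never returns the right-hand side with the sharp normalizer $1/M$; the dilation-by-$1/M$ reduction loses precisely the factor at stake. (For $\alpha<0$ the situation reverses, since then $\inf_\lambda M_{p,\alpha}^{(t,\lambda)}(\|V_A\|_\infty,\|V_B\|_\infty)=M_{p\alpha}^t(\|V_A\|_\infty,\|V_B\|_\infty)$ and the per-$\lambda$ bound holds for every $\lambda$, so your argument closes in that range; but the statement is claimed for all $\alpha$.)

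The paper's proof avoids the coupling rather than resolving it: it does not rescale the sets at all. It writes the fiber length of $\lpcurv$ over $z$ as $\sup_\lambda\sup_{z=C_{p,\lambda,t}x+D_{p,\lambda,t}y}\avg(V_A(x),V_B(y))$, multiplies by $M_{p,-\alpha}^{(t,\lambda)}(\|V_A\|_\infty^{-1},\|V_B\|_\infty^{-1})$ \emph{inside} the $\lambda$-supremum, and applies the H\"older-type inequality \eqref{e:lpholder} with $\beta=-\alpha$ for each fixed $\lambda$, which drops the product to $\min\{V_A(x)/\|V_A\|_\infty,\,V_B(y)/\|V_B\|_\infty\}$, i.e.\ to the fiber of $(1-t)\times_{p,-\infty}A_0+_{p,-\infty}t\times_{p,-\infty}B_0$; it then integrates in $z$ and invokes Proposition~\ref{t:refinementfulld}. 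The two suprema are decoupled only through $\sup_\lambda F\cdot\sup_\lambda G\ge\sup_\lambda(FG)$ together with identifying the theorem's normalizer with $\sup_\lambda M_{p,-\alpha}^{(t,\lambda)}(\|V_A\|_\infty^{-1},\|V_B\|_\infty^{-1})$ --- an identity which is the quasilinearization statement precisely when $-\alpha\ge 0$, so your instinct about where the delicacy sits is accurate. As written, however, your proposal asserts the crucial superlevel-set inclusion without proof and relies on an optimization that is unavailable, so it has a genuine gap at its central step.
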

  	
  	\begin{proof} Arguing as in proof of Proposition~\ref{t:refinementfulld} and using the fact that $A = \tilde{A}, B = \tilde{B}$, we see that, for each $z \in \R^n$ such that 
  $
  		[\lpcurv] \cap(\R_+ + z) \neq \emptyset,
  	$
  and
  		\begin{align*}
  		&\!\!\!\!\!\!\!\!\!\!\!\!\!\![\lpcurv]\cap(\R_+ + z)
  		\\
  		&=\bigcup_{\lambda \in \Q \cap (0,1)} \bigcup_{z = C_{p,\lambda,t}x + D_{p,\lambda,t}y} \left\{\left(z,0 \right) \times\left[0, \avg(V_A(x),V_B(y)) \right] \right\}.
  		\end{align*}
  	
  	Further taking volumes, we conclude that for each fixed $z \in \R^n$,
  		\begin{align*}
  		&V_1([\lpcurv] \cap(\R_+ + z))=\sup_{0 < 
  			\lambda <1}  \sup_{z = C_{p,\lambda,t}x + D_{p,\lambda,t}y} \avg\left(V_A(x),V_B(y)\right).
  		\end{align*}
  		Applying the inequality (\ref{e:lpholder}) with $\beta = -\alpha$, we conclude the relation between the segment function for $L_{p,\alpha}$-curvilinear summation for sets and the normalized segment function for $L_{p,\alpha}$-curvilinear summation below, i.e.,
  		 \begin{align*}
  		&\!\!\!\!\!\!\!\!V_1([\lpcurv] \cap(\R_+ + z)) \cdot \sup_{0 < \lambda <1}\left(M_{p,-\alpha}^{(t,\lambda)}\left(\|V_A\|_{\infty}^{-1},\|V_B\|_{\infty}^{-1} \right)\right)\\
  		&\geq \sup_{0 < 
  			\lambda <1} \left[ \sup_{z = C_{p,\lambda,t}x + D_{p,\lambda,t}y} \avg\left(V_A(x),V_B(y) \right) \cdot M_{p,-\alpha}^{(t,\lambda)}\left(\|V_A\|_{\infty}^{-1},\|V_B\|_{\infty}^{-1} \right) \right]\\
  		&\geq \sup_{0 < 
  			\lambda <1} \left[ \sup_{z = C_{p,\lambda,t}x + D_{p,\lambda,t}y} \min\left\{\frac{V_A(x)}{\|V_A\|_{\infty}},\frac{V_B(y)}{\|V_B\|_{\infty}}  \right\}\right]\\
  		&=V_1([(1-t) \times_{p,-\infty} A_0 +_{p,-\infty} t \times_{p,-\infty} B_0] \cap(\R_+ + z)). 
  		\end{align*}
  		
  		Integrating this inequality for all $z\in\R^n$ and using Fubini's theorem, we obtain 
  		\begin{align*}
  		&\!\!\!\!\!\!\!\!V_{n+1}(\lpcurv) \cdot M_{-p\alpha}^{t}\left(\|V_A\|_{\infty}^{-1},\|V_B\|_{\infty}^{-1} \right)\\
  		&= V_{n+1}(\lpcurv)\cdot\sup_{0 < \lambda <1}\left(M_{p,-\alpha}^{(t,\lambda)}\left(\|V_A\|_{\infty}^{-1},\|V_B\|_{\infty}^{-1} \right)\right)\\
  		&= \int_{\R^n} V_1([\lpcurv] \cap(\R_+ + z)) dz\cdot \sup_{0 < \lambda <1}\left(M_{p,-\alpha}^{(t,\lambda)}\left(\|V_A\|_{\infty}^{-1},\|V_B\|_{\infty}^{-1} \right)\right)\\
  		&\geq \int_{\R^n}\sup_{0 < 
  			\lambda <1} \left[ \sup_{z = C_{p,\lambda,t}x + D_{p,\lambda,t}y} \min\left\{\frac{V_A(x)}{\|V_A\|_{\infty}},\frac{V_B(y)}{\|V_B\|_{\infty}}  \right\}\right] dz\\
  		&=\int_{\R^n} V_1(\left[(1-t) \times_{p,-\infty} A_0 +_{p,-\infty} t \times_{p,-\infty} B_0\right] \cap (\R_+ + z))dz\\
  		&= \int_0^1 V_n(C_r \left(V_{(1-t) \times_{p,-\infty} A_0 +_{p,-\infty} t \times_{p,-\infty} B_0} \right))dr\\
  		&\geq \int_0^1 V_n((1-t) 
  		\cdot_p C_r\left(V_{A}\right)  +_p t \cdot_p C_r\left(V_{B}\right)) dr,
  		\end{align*}
  		by Proposition \ref{t:refinementfulld}, 
  		 completing the proof. 
  		
  	\end{proof}
  	
  Moreover, we extend the normalized above $L_{p,\alpha}$-curvilinear-Brunn-Minkowski inequality to more general space with power parameters  $\bar{\alpha}=(1,\cdots,1,\alpha)$. 
  	
  	\begin{theorem}\label{t:sectionallpinequality} Let $p \geq 1$, $p^{-1}+q^{-1}=1$, $t \in (0,1)$, and let $A,B \subset \R^n \times \R_+$ be bounded $F_{\sigma}$-sets of positive volume such that $A = \tilde{A}$ and $B = \tilde{B}$. Let $H \subset G_{n,k}$ for $k\in\{0,\cdots,n\}$, and let $\|V_{A,H}\|_{\infty}, \|V_{B,H}\|_{\infty}$ and $C_r(V_{A,H})$, $C_r(V_{B,H})$ be  defined as in \eqref{e:criticalquantities}. Then, for any $\alpha, \beta \in (-\infty,\infty)$ with $\alpha + \beta \geq 0$, we obtain
  			\begin{equation*}\label{e:strongest}
  		\begin{split}
  		&\!\!\!\!\!\!\!\!\!\!V_{n+1}(\lpcurv) \cdot  M_{p\beta}^t(\|V_{A,H}\|_{\infty}^{-1}, \|V_{B,H}\|_{\infty}^{-1})\\
  		&\geq \begin{cases}
  		V_{n-k+1}((1-t) \times_{p,\delta} A_H+_{p,\delta} t \times_{p,\delta} B_H), &\text{if } \frac{\alpha\beta}{\alpha + \beta} \geq -\frac{1}{k},\\
  		V_{n-k+1}((1-t) \times^{p,\delta} A_H+^{p,\delta} t \times^{p,\delta} B_H), &\text{if } \frac{\alpha\beta}{\alpha + \beta} < - \frac{1}{k},
  		\end{cases}
  		\end{split}    
  		\end{equation*}
	where $\delta = (\alpha^{-1} + \beta^{-1} + k)^{-1}$.
  		
  	\end{theorem}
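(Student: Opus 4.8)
The plan is to run the argument of Theorem~\ref{t:normalizedlp} with the one-dimensional fibres $\R_+ + z$ replaced by the $(k+1)$-dimensional fibres $\bar H + z$, $z \in \h$, and with the full-dimensional $L_{p,\alpha}$-curvilinear--Brunn--Minkowski inequality (Theorem~\ref{t:lpcurvfulld}), applied in dimension $k+1$ (i.e. with $n$ replaced by $k$), used on each such fibre in place of the one-dimensional Lemma~\ref{t:1dlemma}. Indeed Theorem~\ref{t:normalizedlp} is recovered as the degenerate case $k=0$, $\beta=-\alpha$, in which the fibrewise step is vacuous because the fibres of a compression are intervals.

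First I would reduce, by the standard density considerations used throughout this section, to bounded $F_\sigma$-sets $A,B$ that are almost disjoint unions of boxes $\bar A_m \times A_m$ with $\bar A_m \subset \rn$ a coordinate box and $A_m \subset \R_+$ an interval. For such $A$ and for $x$ in the relative interior of the $\h$-projection of a fixed box of $A$, the slice $A \cap (\bar H + x)$, viewed as a subset of $\R^k \times \R_+$, does not depend on $x$, and since $A = \tilde{A}$ it is a hypograph over $\R^k$, hence its own compression, with $(k+1)$-dimensional volume equal to $V_{A,H}(x)$; likewise for $B$. Fixing $z \in \h$ with $(\lpcurv)\cap(\bar H + z)\neq\emptyset$ and using the orthogonal splitting $\rn = H \oplus \h$, on which the $L_p$-Minkowski part of $\times_{p,\alpha}$ acts blockwise, one gets, exactly as in \eqref{e:setrepresentation1},
\begin{equation*}
(\lpcurv)\cap(\bar H + z) \;=\; \bigcup_{\lambda\in\Q\cap(0,1)}\ \bigcup_{z=C_{p,\lambda,t}x+D_{p,\lambda,t}y}\ \Big(C_{p,\lambda,t}\times_\alpha\big(A\cap(\bar H+x)\big)+_\alpha D_{p,\lambda,t}\times_\alpha\big(B\cap(\bar H+y)\big)\Big),
\end{equation*}
each summand being a $(k+1)$-dimensional curvilinear combination with $L_p$-coefficients of slices which are compressions.

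Applying Theorem~\ref{t:lpcurvfulld} in dimension $k+1$ to the summands --- passing from the union over $\lambda$ to a fixed optimizing pair of slices just as in the proof of Theorem~\ref{compressionvolume} --- and writing $\alpha':=\alpha/(1+k\alpha)$, so $(\alpha')^{-1}=\alpha^{-1}+k$, I would deduce the fibrewise lower bound
\begin{equation*}
V_{k+1}\big((\lpcurv)\cap(\bar H+z)\big)\;\geq\;\sup_{0<\lambda<1}\ \sup_{z=C_{p,\lambda,t}x+D_{p,\lambda,t}y}\ M_{p,\alpha'}^{(t,\lambda)}\big(V_{A,H}(x),V_{B,H}(y)\big)
\end{equation*}
when $\alpha\geq-\tfrac1k$, with $M_{p,\alpha'}^{(t,\lambda)}$ replaced by $\min\big\{[C_{p,\lambda,t}]^{1/\alpha'}V_{A,H}(x),[D_{p,\lambda,t}]^{1/\alpha'}V_{B,H}(y)\big\}$ when $\alpha<-\tfrac1k$. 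Multiplying by $M_{p\beta}^t(\|V_{A,H}\|_\infty^{-1},\|V_{B,H}\|_\infty^{-1})=\sup_\lambda M_{p,\beta}^{(t,\lambda)}(\|V_{A,H}\|_\infty^{-1},\|V_{B,H}\|_\infty^{-1})$, pulling the supremum over $\lambda$ into the product, and combining the two $L_p$-means at a common $\lambda$ by the H\"older-type inequality \eqref{e:lpholder} with the pair of powers $(\alpha',\beta)$ --- or, when $\alpha<-\tfrac1k$, by an elementary variant pairing the above $\min$-expression with $M_{p,\beta}^{(t,\lambda)}$ --- one bounds the product below, using $\big(\tfrac{\alpha'\beta}{\alpha'+\beta}\big)^{-1}=(\alpha')^{-1}+\beta^{-1}=\alpha^{-1}+\beta^{-1}+k=\delta^{-1}$, either by $\sup_\lambda\sup_{z=C_{p,\lambda,t}x+D_{p,\lambda,t}y}M_{p,\delta}^{(t,\lambda)}\big(V_{A,H}(x)/\|V_{A,H}\|_\infty,V_{B,H}(y)/\|V_{B,H}\|_\infty\big)$ or by the corresponding $\min$-expression with exponents $1/\delta$. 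By the definitions of $A_H,B_H$ in \eqref{e:criticalquantities} and a fibre computation as in \eqref{e:setrepresentation1}, these are precisely the $z$-fibre volumes of the two sets on the right-hand side of the statement, in the respective cases; integrating over $z\in\h$ by Fubini's theorem, the mean $M_{p\beta}^t(\cdot,\cdot)$ being independent of $z$, then yields the asserted inequality.

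The step I expect to be the main obstacle is the case bookkeeping, i.e. checking that the dichotomy $\alpha\gtrless-\tfrac1k$ coming from Theorem~\ref{t:lpcurvfulld} on the fibre, together with the dichotomy $\alpha'+\beta\gtrless0$ separating the branches of \eqref{e:lpholder}, collapses to exactly the stated dichotomy $\tfrac{\alpha\beta}{\alpha+\beta}\gtrless-\tfrac1k$ and gives the stated value of $\delta$. The algebra rests on $(\alpha')^{-1}=\alpha^{-1}+k$ and $\big(\tfrac{\alpha\beta}{\alpha+\beta}\big)^{-1}=\alpha^{-1}+\beta^{-1}=\delta^{-1}-k$: under the standing hypothesis $\alpha+\beta\geq0$ one verifies that $\alpha<-\tfrac1k$ forces $\tfrac{\alpha\beta}{\alpha+\beta}<-\tfrac1k$, that for $\alpha\geq-\tfrac1k$ one has $\tfrac{\alpha\beta}{\alpha+\beta}\geq-\tfrac1k\iff\alpha'+\beta\geq0$, and that $\alpha'\beta<0$ whenever $\alpha'+\beta<0$, so the second branch of \eqref{e:lpholder} is applicable. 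The remaining delicate points --- correctly matching the quasi-curvilinear operation $\times^{p,\delta}$ in the second case to the fibrewise quantity, and the appearance of the boundary exponents $0$ and $\pm\infty$ for $\alpha'$ and $\delta$ (e.g. at $\alpha=-\tfrac1k$ or $\beta=-\alpha$) --- are handled by continuity of the $L_p$-means, as in Uhrin's original argument \cite{Uhrin}.
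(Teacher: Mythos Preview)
Your route differs from the paper's in the \emph{order} of the two key ingredients. The paper first passes to one-dimensional fibres (using $A=\tilde A$, $B=\tilde B$), multiplies by the norm factor and applies the H\"older product inequality \eqref{e:lpholder} with exponents $(\alpha,\beta)$ to obtain $\gamma=\alpha\beta/(\alpha+\beta)$; only then does it split $\R^n=H\oplus H^\perp$, recognize the inner $H$-integral as the $(k+1)$-dimensional volume of an $L_{p,\gamma}$-curvilinear combination of \emph{fixed} normalized compressions $C_H(v),D_H(w)$, and apply Theorem~\ref{t:lpcurvfulld} with parameter $\gamma$. You instead propose to apply the $(k+1)$-dimensional curvilinear Brunn--Minkowski inequality first, with parameter $\alpha$, directly to the slices $A\cap(\bar H+x)$, $B\cap(\bar H+y)$, and only afterwards bring in the norm factor and H\"older with $(\alpha',\beta)$. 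Both orders produce the same $\delta$, and your case analysis matching the dichotomies to $\alpha\beta/(\alpha+\beta)\gtrless -1/k$ is correct.

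There is, however, a genuine gap in your fibrewise step. For a fixed $z\in H^\perp$, the slice $(\lpcurv)\cap(\bar H+z)$ is a union over $\lambda$ of classical curvilinear combinations of slices $A\cap(\bar H+x)$ and $B\cap(\bar H+y)$, but the constraint $z=C_{p,\lambda,t}x+D_{p,\lambda,t}y$ forces $x,y$ to \emph{vary with} $\lambda$. Hence for no single pair $(\bar x,\bar y)$ do you obtain the full union $\bigcup_{0<\lambda<1}[\,C_{p,\lambda,t}\times_\alpha(A\cap(\bar H+\bar x))+_\alpha D_{p,\lambda,t}\times_\alpha(B\cap(\bar H+\bar y))\,]$ inside the fibre, so Theorem~\ref{t:lpcurvfulld} as stated does not apply; the analogy with Theorem~\ref{compressionvolume} breaks because there the fibre direction ($\R_+$) carries no part of the constraint. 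What does work is to fix $\lambda$ \emph{and} a pair $(x,y)$ satisfying the constraint, apply Uhrin's classical inequality \eqref{e:BMUhrin} in dimension $k+1$ with coefficients $(1-\theta_\lambda,\theta_\lambda)$ after factoring out $M_\lambda=C_{p,\lambda,t}+D_{p,\lambda,t}$ (the anisotropic scaling contributes exactly $M_\lambda^{1/\alpha'}$), and then take the supremum over $\lambda$ and $(x,y)$; this yields your claimed bound $\sup_\lambda\sup_{z=\cdots}M_{p,\alpha'}^{(t,\lambda)}(V_{A,H}(x),V_{B,H}(y))$. The paper's ordering is cleaner precisely because after the H\"older step the outer variables $v,w\in H^\perp$ are decoupled from the inner $\lambda$, so the inner integral over $H$ genuinely equals the $L_{p,\gamma}$-curvilinear volume of two fixed sets and Theorem~\ref{t:lpcurvfulld} applies verbatim.
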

  	
  	\begin{proof}
  		Using similar arguments in  Theorem~\ref{t:normalizedlp}, we can write
  		\begin{equation}\label{e:complicatedexpression}
  		\begin{split}
  		&\!\!\!\!\!\!\!\!\!V_{n+1}(\lpcurv) \cdot  M_{p\beta}^t(\|V_{A,H}\|_{\infty}^{-1}, \|V_{B,H}\|_{\infty}^{-1})\\
  		&=V_{n+1}(\lpcurv) \cdot \sup_{0 < \lambda < 1} \avgb(\|V_{A,H}\|_{\infty}^{-1}, \|V_{B,H}\|_{\infty}^{-1})\\
  		&\geq \int_{\h} \int_H \sup_{0 < \lambda <1} \sup_{z= C_{p,\lambda,t}x + D_{p,\lambda,t}y} \Phi(x,y) dz,
  		\end{split}
  		\end{equation}
  		where 
  		\[
  		\Phi(x,y) = \avg(V_A(x),V_B(y))\cdot \avgb(\|V_{A,H}\|_{\infty}^{-1}, \|V_{B,H}\|_{\infty}^{-1}).
  		\]
  		Applying (\ref{e:lpholder}) to $\Phi$, we see that 
  		\begin{align*}
  		\Phi(x,y) 
  		\geq \avgc\left(\frac{V_A(x)}{\|V_{A,H}\|_{\infty}},\frac{V_B(y)}{\|V_{B,H}\|_{\infty}} \right)=:\tilde{\Phi}(x,y) 
  		\end{align*}
  		where $\gamma = \frac{\alpha\beta}{\alpha + \beta}$.
  		
  		Write $z = (z_1,z_2) \in H \times \h = \R^n$. Therefore based on \eqref{e:complicatedexpression}, we obtain
  		\begin{align*}
  		&\!\!\!\!\!\!\!\!\!\int_{\h} \int_H \sup_{0 < \lambda <1} \sup_{z= C_{p,\lambda,t}x + D_{p,\lambda,t}y} \tilde{\Phi}(x,y) dz\\
  		& \geq \int_{\h} \sup_{0 < \lambda <1} \sup_{z_2= C_{p,\lambda,t}v + D_{p,\lambda,t}w}
  		\left( \int_{H} \sup_{0 < \lambda <1} \sup_{z_1= C_{p,\lambda,t}u + D_{p,\lambda,t}s} \tilde{\Phi}(u+v,s+w) dz_1 \right) dz_2,
  		\end{align*}
  		where we have 
  			\begin{align*}
  		\tilde{\Phi}(u+v,s+w) &= \avgc\left(\frac{V_A(u+v)}{\|V_{A,H}\|_{\infty}},\frac{V_B( s+w)}{\|V_{B,H}\|_{\infty}} \right)\\
  		&= \avgc\left(\frac{V_1(A \cap (\R_+ +u+v))}{\|V_{A,H}\|_{\infty}},\frac{V_1(B\cap (\R_+ + s+w))}{\|V_{B,H}\|_{\infty}} \right).
  		\end{align*}
  		For each fixed $v,w$ consider the following normalized compression in (\ref{e:criticalquantities}) 
  		\begin{align*}
  		C_H(v) = \left\{(h,r) \colon r \in \left[0, \frac{V_1(A \cap (\R_+ + h+ v))}{\|V_{A,H}\|_{\infty}} \right], A \cap (\R+ h +v) \neq \emptyset \right\},\\
  		D_H(w) = \left\{(h,r) \colon r \in \left[0, \frac{V_1(B \cap (\R+ h+ w))}{\|V_{B,H}\|_{\infty}} \right], B \cap (\R+ h +w) \neq \emptyset \right\}.
  		\end{align*}
  		
  		Notice that $C_H(v) = \widetilde{C_H(v)}$ and $D_H(w) = \widetilde{D_H(w)}$, and so, akin to the proof of Theorem~\ref{t:normalizedlp}, using Fubini's theorem, we have that 
  		\begin{equation}\label{e:integralineq}
  		\begin{split}
  		&\int_{H} \sup_{0 < \lambda <1} \sup_{z_1= C_{p,\lambda,t}u + D_{p,\lambda,t}s} \tilde{\Phi}(u+v,s+w) dz_1\\
  		&=\int_{H} \sup_{0 < \lambda <1} \sup_{z_1= C_{p,\lambda,t}u + D_{p,\lambda,t}s}\left(\avgc\left(\frac{V_1(A \cap (\R_+ +u+v))}{\|V_{A,H}\|_{\infty}},\frac{V_1(B\cap (\R_+ + s+w))}{\|V_{B,H}\|_{\infty}} \right)\right) dz_1\\
  		&=\int_H V_1([ (1-t) \times_{p,\gamma} C_H(v) +_{p,\gamma} t \times_{p,\gamma} D_H(w)] \cap (\R_+ + z_1))dz_1\\
  		&=V_{k+1}((1-t) \times_{p,\gamma} C_H(v) +_{p,\gamma} t \times_{p,\gamma} D_H(w))
  		\end{split}    
  		\end{equation}
  		
  		By applying Theorem~\ref{t:lpcbmfull1} to the last quantity appearing in \eqref{e:integralineq}, 
  		we see that 
  		\begin{align*}
  		&\int_{\h} \sup_{0 < \lambda <1} \sup_{z_2= C_{p,\lambda,t}v + D_{p,\lambda,t}w} \left( \int_{H} \sup_{0 < \lambda <1} \sup_{z_1= C_{p,\lambda,t}u + D_{p,\lambda,t}s} \tilde{\Phi}(u+v,s+w) dz_1 \right) dz_2\\
  		&\geq \int_{\h}\sup_{0 < \lambda <1} \sup_{z_2= C_{p,\lambda,t}v + D_{p,\lambda,t}w}\left[\begin{cases}
  		M_{p,\delta}^{(t,\lambda)}(V_{k+1}(C_H(v)),V_{k+1}(D_H(w))), &\text{if } \gamma \geq -\frac{1}{k},\\
  		\min\left\{\left[C_{p,\lambda,t}\right]^{\frac{1}{\delta}}V_{k+1}(C_H(v)),\left[D_{p,\lambda,t}\right]^{\frac{1}{\delta}} V_{k+1}(D_H(w))\right\}, &\text{if } \gamma < - \frac{1}{k},
  		\end{cases}\right]
  		dz_2,
  		\end{align*}
  		where $\delta = (\alpha^{-1} + \beta^{-1} + k)^{-1}.$
  		
  		Notice that 
  		\begin{align*}
  		&V_{k+1}(C_H(v)) = V_1(A_H \cap (\R_+ + v)),\\
  		&V_{k+1}(D_H(w)) = V_1(B_H  \cap (\R_+ + w)).
  		\end{align*}
  		
  		Finally, noting that $\widetilde{A_H} = A_H$ and $\widetilde{B_H}  = B_H$, similar tactics as before yield the following identities 
  		\begin{align*}
  		&\!\!\!\!\!\!\!V_1([(1-t) \times_{p,\delta} \widetilde{A_H} +_{p,\delta} t \times_{p,\delta} \widetilde{B_H} )]\cap (\R_+ + z_2))\\
  		&= \sup_{0 < \lambda <1} \sup_{z_2= C_{p,\lambda,t}v + D_{p,\lambda,t}w} M_{p,\delta}^{(t,\lambda)}(V_1(\widetilde{A_H} \cap (\R_+ +v)),V_1(\widetilde{B_H}  \cap (\R_+ + w))),\\
  			&\!\!\!\!\!\!\!V_1([(1-t) \times^{p,\delta} \widetilde{A_H} +^{p,\delta} t \times^{p,\delta} \widetilde{B_H}]\cap (\R_+ + z_2))\\
  		&=\sup_{0 < \lambda <1}\sup_{z_2= C_{p,\lambda,t}v + D_{p,\lambda,t}w} R(v,w),
  		\end{align*}
  		where 
  		\[
  		R(v,w) =  \min\left\{\left[C_{p,\lambda,t}\right]^{\frac{1}{\delta}}V_1(\widetilde{A_H} \cap (\R_+ +v)),\left[D_{p,\lambda,t}\right]^{\frac{1}{\delta}} V_1(\widetilde{B_H}  \cap (\R_+ + w))\right\}.
  		\]
  		
  		Putting everything together, by integrating these inequalities over $\h$, and using Fubini's theorem appropriately, we conclude the proof. 
  	\end{proof}

  	\section{ Generalized $L_{p,\bar{\alpha}}$-Borell-Brascamp-Lieb inequality }\label{section4}

  	In \cite{RX, RX2}, the authors proved the $L_p$ Borell-Brascamp-Lieb inequality using the methods of mass transportation, revolution bodies and the classic Borell-Brascamp-Lieb inequality, etc. 	In this section, we extend the $L_p$ Borell-Brascamp-Lieb inequality for functions to  multiple power $L_{p,\bar{\alpha}}$ by proving the  $L_{p,\alpha}$ curvilinear-Brunn-Minkowski inequality for  as well as  utilizing the hypo-graph of functions in Subsection \ref{subsection41}. Based on this inequality, we establish a normalized version of $L_{p,\alpha}$ Borell-Brascamp-Lieb inequality for functions and a normalized  $L_{p,\alpha}$ curvilinear-Brunn-Minkowski inequality for sets in terms of measure in Subsections \ref{subsection42} and \ref{subsection43}, respectively.
  	\subsection{New proof of multiple $L_{p}$-Borell-Brascamp-Lieb inequality for functions}\label{subsection41}

Recall the $L_p$ ($L_{p,\alpha}$) Borell-Brascamp-Lieb inequality for functions in \cite{RX,RX2} below.
  	
  	\begin{theorem}\label{t:stronglpbbl} Let $p \geq 1$, $p^{-1} + q^{-1} = 1$, $t \in (0,1)$, and $\alpha \in [-\infty,+\infty]$. Suppose that $f,g,h \colon \R^n \to \R_+$ are a triple of integrable functions that satisfy the condition 
  		\begin{equation}\label{e:lpbblassumption}
  		h\left(C_{p,\lambda,t}x +D_{p,\lambda,t}y \right) \geq 
  		\avg(f(x),g(y))
  		\end{equation}
  		for all $x,y \in \R^n$ such that $f(x)g(y)>0$ and for every $t, \lambda \in (0,1)$. Then the following integral inequality holds:
  		\begin{equation*}\label{e:lpbblresult}
  		\begin{split}
  		\int_{\R^n}h(x) dx
  		&\geq \begin{cases}
  M_{p\gamma}^t\left(\int_{\R^n}f, \int_{\R^n} g \right) , &\text{if } \alpha \geq -\frac{1}{n},\\
  		\sup_{0 < \lambda < 1} \left\{ \min\left\{\left[C_{p,\lambda,t}\right]^{\frac{1}{\gamma}}\int_{\R^n}f,\left[D_{p,\lambda,t}\right]^{\frac{1}{\gamma}} \int_{\R^n}g\right\} \right\}, &\text{if } \alpha < - \frac{1}{n},
  		\end{cases}
  		\end{split}
  		\end{equation*}
  		where $\gamma = \frac{\alpha}{1+n\alpha}$.
  		
  	\end{theorem}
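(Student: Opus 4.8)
The plan is to deduce this functional inequality from the geometric $L_{p,\alpha}$-curvilinear-Brunn--Minkowski inequality for sets, Theorem~\ref{t:lpcurvfulld}, by passing to hypo-graphs, in exact analogy with the classical derivation of the Borell--Brascamp--Lieb inequality from \eqref{e:BMUhrin} recalled in the Introduction, but now carrying along the $L_p$-coefficients $(C_{p,\lambda,t},D_{p,\lambda,t})$ and the supremum over $\lambda$.

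First I would reduce to the case where $f$ and $g$ are bounded with bounded support. If $\int_{\R^n}h=+\infty$ there is nothing to prove, so assume $h\in L^1$; and if $\int_{\R^n}f=0$ or $\int_{\R^n}g=0$ the right-hand side of the asserted inequality vanishes and the statement is trivial, so assume both integrals are positive. Choosing compact sets $K_k\uparrow\R^n$ and setting $f_k=\min(f,k)\chi_{K_k}$, $g_k=\min(g,k)\chi_{K_k}$, the monotonicity of $\avg$ in each argument, together with the convention that $\avg$ vanishes whenever one argument is zero, shows that $(f_k,g_k,h)$ still satisfies \eqref{e:lpbblassumption}. Since $\int f_k\uparrow\int f$ and $\int g_k\uparrow\int g$ by monotone convergence, while the right-hand side of the claimed inequality is continuous and nondecreasing in the pair $\big(\int f,\int g\big)$ --- note that for $\alpha<-\tfrac{1}{n}$ one has $1/\gamma=(1+n\alpha)/\alpha>0$, so $[C_{p,\lambda,t}]^{1/\gamma}$ and $[D_{p,\lambda,t}]^{1/\gamma}$ are genuine positive powers and $\sup_{\lambda}\min\{\cdots\}$ is still monotone in the two volumes --- it suffices to establish the inequality for each triple $(f_k,g_k,h)$.

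So assume now that $f,g$ are bounded with bounded support, and put $A=\text{hyp}(f)$, $B=\text{hyp}(g)$: these are bounded Borel subsets of $\R^n\times\R_+$ of positive volume with $\widetilde A=A$, $\widetilde B=B$, $V_{n+1}(A)=\int_{\R^n}f$ and $V_{n+1}(B)=\int_{\R^n}g$. The key claim is the inclusion $(1-t)\times_{p,\alpha}A+_{p,\alpha}t\times_{p,\alpha}B\subseteq\text{hyp}(h)$. To see this one computes, as in \eqref{e:funvolume} and \eqref{e:setrepresentation1}, the fibre of the curvilinear sum over a point $z\in\R^n$: for each $\lambda\in(0,1)$ and each decomposition $z=C_{p,\lambda,t}x+D_{p,\lambda,t}y$, the image of $[0,f(x)]\times[0,g(y)]$ under $(a,b)\mapsto\avg(a,b)$ is the whole interval $\big[0,\avg(f(x),g(y))\big]$ (the map is continuous, nondecreasing in each variable, and vanishes at the corner $(0,0)$), so the fibre of the curvilinear sum over $z$ is $\bigcup_{0<\lambda<1}\bigcup_{z=C_{p,\lambda,t}x+D_{p,\lambda,t}y}\big[0,\avg(f(x),g(y))\big]$, an interval of the form $[0,L(z)]$ with $L(z)=\sup_{0<\lambda<1}\sup_{z=C_{p,\lambda,t}x+D_{p,\lambda,t}y}\avg(f(x),g(y))$; by hypothesis \eqref{e:lpbblassumption} every term of this supremum is at most $h(z)$ (the terms with $f(x)g(y)=0$ contribute $0$), so $L(z)\le h(z)$ and the inclusion follows. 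Taking $(n+1)$-dimensional volumes and using $V_{n+1}(\text{hyp}(h))=\int_{\R^n}h$ gives $\int_{\R^n}h\ge V_{n+1}\big((1-t)\times_{p,\alpha}A+_{p,\alpha}t\times_{p,\alpha}B\big)$, and Theorem~\ref{t:lpcurvfulld} applied to $A=\widetilde A$, $B=\widetilde B$ bounds the right-hand side below by $M_{p\gamma}^t\big(\int f,\int g\big)$ when $\alpha\ge-\tfrac{1}{n}$ and by $\sup_{0<\lambda<1}\min\{[C_{p,\lambda,t}]^{1/\gamma}\int f,[D_{p,\lambda,t}]^{1/\gamma}\int g\}$ when $\alpha<-\tfrac{1}{n}$, with $\gamma=\alpha/(1+n\alpha)$. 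Undoing the truncation of the previous paragraph settles all $\alpha\in(-\infty,\infty)$; the endpoint cases $\alpha=0$ (where $\avg(a,b)=a^{C_{p,\lambda,t}}b^{D_{p,\lambda,t}}$ and $\gamma=0$) and $\alpha=\pm\infty$ follow by the same scheme, either directly or by letting $\alpha\to0,\pm\infty$ in the inequality already proved.

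The main obstacle is the fibre computation: one must check that the $\avg$-section of the curvilinear sum of two hypo-graphs is exactly $[0,\avg(f,g)]$ along each fibre, and that condition \eqref{e:lpbblassumption} is precisely what places this interval under the graph of $h$. This step also carries the only real technicality, namely the measurability of the curvilinear sum and of its section function; I would handle it by the standard device of Section~\ref{section3}: approximate $f$ and $g$ by simple functions (finite almost-disjoint unions of boxes), for which the curvilinear sum is a countable union over $\lambda\in(0,1)\cap\Q$ of compact sets, hence an $F_\sigma$-set, and pass to the limit by monotone convergence, using the continuity of $\lambda\mapsto\avg(a,b)$ to replace $\sup_{\lambda\in(0,1)}$ by $\sup_{\lambda\in(0,1)\cap\Q}$. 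Everything else is routine monotonicity and continuity of the means $\avg$ and $M_{p\gamma}^t$.
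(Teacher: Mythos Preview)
Your proposal is correct and follows essentially the same route as the paper's proof: pass to the hypo-graphs $A=\mathrm{hyp}(f)$, $B=\mathrm{hyp}(g)$, compute the fibre of the $L_{p,\alpha}$-curvilinear sum over $z\in\R^n$ as the interval $[0,\sup_{\lambda}\sup_{z=C_{p,\lambda,t}x+D_{p,\lambda,t}y}\avg(f(x),g(y))]$, use \eqref{e:lpbblassumption} to obtain $\int h\ge V_{n+1}((1-t)\times_{p,\alpha}A+_{p,\alpha}t\times_{p,\alpha}B)$, and finish by invoking Theorem~\ref{t:lpcurvfulld}. Your write-up is in fact a bit more careful than the paper's on two points (the explicit truncation $f_k,g_k$ to reduce to bounded functions with bounded support, and the remark on measurability via $F_\sigma$-approximation over $\lambda\in(0,1)\cap\Q$), but the architecture is identical.
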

  	
  	\begin{proof}
  		As $f$ and $g$ are integrable functions, we may assume without loss of generality that $f,g$ are bounded functions supported on a bounded sets. For a bounded integrable function $m \colon \R^n \to \R_+$ with bounded support,  consider its hypo-graph
  		\[
  		\text{hyp}(m) = \{(x,r) \in \R^n \times \R_+ \colon r \in [0, f(x)], x \in \text{supp}(m)\},
  		\]
  		which is a bounded Borel set. It is easy to check that
  		\[
  		\text{hyp}(m) = \widetilde{\text{hyp}(m)}, \quad V_{n+1}(\text{hyp}(m)) = \int_{\R^n}m(x)dx. 
  		\]
  	     
  	     	Observe, for suitable choices of $z \in \R^n$, we have 
  		\begin{align*}
  		&\!\!\!\!\!\!\!\!\![(1-t) \times_{p,\alpha}\text{hyp}(f) +_{p,\alpha} t \times_{p,\alpha}\text{hyp}(g)] \cap (\R_+ + z)\\
  		&=\bigcup_{\lambda \in (0,1)} \bigcup_{z=C_{p,\lambda,t}x + D_{p,\lambda,t}y} \left(z,0
  		\right) \times \left[0, \avg(f(x),g(y) \right],
  		\end{align*}
  	 where $x\in supp(f)$ and $y\in supp(g)$.
  		In particular, the above implies that 
  		\begin{align*}
  		&\!\!\!\!\!\!\!\!\!V_1([(1-t) \times_{p,\alpha}\text{hyp}(f) +_{p,\alpha} t \times_{p,\alpha}\text{hyp}(g)] \cap (\R_+ + z))\\
  		&= \sup_{0 < \lambda < 1} \left[\sup_{z = C_{p,\lambda,t}x + D_{p,\lambda,t}y} \avg(f(x),g(y))\right].
  		\end{align*}
  		Due to the assumption \eqref{e:lpbblassumption}, this shows that 
  		\begin{align*}
  		V_{n+1}(\text{hyp}(h))&=\int_{\R^n}h(z) dz \\&\geq \int_{\R^n} \sup_{0 < \lambda < 1}\left[ \sup_{z = C_{p,\lambda,t}x + D_{p,\lambda,t}y} \avg(f(x),g(y)) \right] dz\\
  		&= \int_{\R^n} V_1([(1-t) \times_{p,\alpha}\text{hyp}(f) +_{p,\alpha} t \times_{p,\alpha}\text{hyp}(g)] \cap (\R_+ + z)) dz\\
  		&= V_{n+1}((1-t) \times_{p,\alpha} \text{hyp}(f) +_{p,\alpha} t \times_{p,\alpha} \text{hyp}(g)).
  		\end{align*}
  		
  		Consequently, we employ the $L_{p,\alpha}$-curvilinear-Brunn-Minkowski inequality for sets, i.e., Theorem~\ref{t:lpcurvfulld} to the sets $\text{hyp}(f)$ and $\text{hyp}(g)$ to obtain
  		\begin{align*}
  	\int_{\R^n}h(x) dx
  		& \geq V_{n+1}((1-t) \times_{p,\alpha} \text{hyp}(f) +_{p,\alpha} t \times_{p,\alpha} \text{hyp}(g))\\
  		&\geq \begin{cases}
  		\sup_{0 < \lambda < 1} \left\{\avgc\left(V_{n+1}(\text{hyp}(f)), V_{n+1}(\text{hyp}(g))\right)\right\}, &\text{if } \alpha \geq -\frac{1}{n},\\
  		\sup_{0 < \lambda < 1} \left\{ \min\left\{\left[C_{p,\lambda,t}\right]^{\frac{1}{\gamma}}V_{n+1}(\text{hyp}(f)),\left[D_{p,\lambda,t}\right]^{\frac{1}{\gamma}} V_{n+1}(\text{hyp}(g)) \right\} \right\}, &\text{if } \alpha < - \frac{1}{n},
  		\end{cases}\\
  		&=\begin{cases}
  		\sup_{0 < \lambda < 1} \left\{\avgc\left(\int_{\R^n}f, \int_{\R^n} g \right) \right\}, &\text{if } \alpha \geq -\frac{1}{n},\\
  		\sup_{0 < \lambda < 1} \left\{ \min\left\{\left[C_{p,\lambda,t}\right]^{\frac{1}{\gamma}}\int_{\R^n}f,\left[D_{p,\lambda,t}\right]^{\frac{1}{\gamma}} \int_{\R^n}g\right\} \right\}, &\text{if } \alpha < - \frac{1}{n},
  		\end{cases}
  		\\
  		&=\begin{cases}
  	M_{p\gamma}^t\left(\int_{\R^n}f, \int_{\R^n} g \right), &\text{if } \alpha \geq -\frac{1}{n},\\
  		\sup_{0 < \lambda < 1} \left\{ \min\left\{\left[C_{p,\lambda,t}\right]^{\frac{1}{\gamma}}\int_{\R^n}f,\left[D_{p,\lambda,t}\right]^{\frac{1}{\gamma}} \int_{\R^n}g\right\} \right\}, &\text{if } \alpha < - \frac{1}{n},
  		\end{cases}
  		\end{align*}	
  		where $\gamma = \frac{\alpha}{1+n\alpha}$, completing the proof. 
  	\end{proof}
 Repeating the proof of Theorem~\ref{t:stronglpbbl}, similar to the process in  Theorem~\ref{t:lpcurvfulld} and  Theorem~\ref{t:lpUhrinBM}, we obtain the following generalization and strengthening of Uhrin's curvilinear Pr\'ekopa-Leindler inequality in \cite{Uhrin2}---the $L_{p,\bar{\alpha}}$-Borell-Brascamp-Lieb inequality.
  	\begin{theorem}\label{t:UhrinPLlp} 
  		Let $p \geq 1$, $p^{-1} + q^{-1} = 1$, $t \in (0,1)$, and $\bar{\alpha}=(\alpha_1,\dots, \alpha_{n+1})$ with $\alpha_i \in [0,1]$ for all $i =1,\dots,n$. Suppose that $f,g,h\colon (\R_+)^n \to \R_+$ are a triple of bounded integrable functions having bounded support that satisfy the condition 
  		\begin{align}\label{multipleconvolution}
  		h\left(M_{p,\alpha_1}^{(t,\lambda)}(x_1,y_1),\dots, M_{p,\alpha_n}^{(t,\lambda)}(x_n,y_n)\right) \geq M_{p,\alpha_{n+1}}^{(t,\lambda)}(f(x_1,\dots,x_n), g(y_1,\dots,y_n))
  		\end{align}
  		for all $x=(x_1,\cdots, x_n), y=(y_1,\cdots, y_n) \in (\R_+)^n$ with $f(x)g(y)=f(x_1,\cdots, x_n)g(y_1,\cdots, y_n) > 0$, and for all $\lambda \in (0,1)$. Then the following inequality holds:
  		\begin{align*}
  		&\int_{(\R_+)^n}h(x) dx\\
  		&\geq \begin{cases}
  		M_{p\gamma}^t\left(\int_{(\R_+)^n}f(x) dx,\int_{(\R_+)^n}g(x) dx \right) ,
  		&\text{if }\alpha_{n+1} \geq -\left(\sum_{i=1}^n \alpha_i^{-1}\right)^{-1},\\
  		\sup_{0< \lambda < 1} \min\left\{\left[C_{p,\lambda,t}\right]^{\frac{1}{\gamma}}\int_{(\R_+)^n}f(x) dx,\left[D_{p,\lambda,t}\right]^{\frac{1}{\gamma}} \int_{(\R_+)^n}g(x) dx\right\},
  		& \text{if } \alpha_{n+1} < -\left(\sum_{i=1}^n \alpha_i^{-1}\right)^{-1}, 
  		\end{cases}
  		\end{align*} 
  		where 
  	$
  		\gamma =\left(\sum_{i=1}^{n+1}\alpha_i^{-1}\right)^{-1}. $
  	\end{theorem}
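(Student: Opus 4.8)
The plan is to mimic exactly the argument used for Theorem~\ref{t:stronglpbbl}, but now working with the full vector-power curvilinear summation $\otimes_{p,\bar\alpha}$ in $\R^{n+1}$ rather than the single-power operation $\times_{p,\alpha}$, and invoking Theorem~\ref{t:lpUhrinBM} in place of Theorem~\ref{t:lpcurvfulld}. First I would reduce to the case where $f,g$ are bounded and compactly supported (this is automatic from the hypotheses), and attach to each bounded integrable $m\colon(\R_+)^n\to\R_+$ its hypo-graph $\mathrm{hyp}(m)\subset(\R_+)^{n+1}$, recording the two basic facts $\mathrm{hyp}(m)=\widetilde{\mathrm{hyp}(m)}$ and $V_{n+1}(\mathrm{hyp}(m))=\int_{(\R_+)^n}m(x)\,dx$. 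The first of these is what makes the compression hypothesis in Theorem~\ref{t:lpUhrinBM} hold for free.

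Next I would compute the vertical section of the curvilinear sum of the two hypo-graphs. Since in the vector-power operation the first $n$ coordinates are combined via $M_{p,\alpha_i}^{(t,\lambda)}$ and the last coordinate via $M_{p,\alpha_{n+1}}^{(t,\lambda)}$, one gets, for each admissible $z=(z_1,\dots,z_n)$,
\begin{align*}
&\bigl[(1-t)\otimes_{p,\bar\alpha}\mathrm{hyp}(f)\oplus_{p,\bar\alpha}t\otimes_{p,\bar\alpha}\mathrm{hyp}(g)\bigr]\cap(\R_++z)\\
&=\bigcup_{\lambda\in(0,1)}\ \bigcup_{\substack{z_i=M_{p,\alpha_i}^{(t,\lambda)}(x_i,y_i)\\ 1\le i\le n}}(z,0)\times\bigl[0,\,M_{p,\alpha_{n+1}}^{(t,\lambda)}(f(x),g(y))\bigr],
\end{align*}
using that for each fixed $\lambda$ the last coordinate $M_{p,\alpha_{n+1}}^{(t,\lambda)}$ is monotone in $a=f(x)$, $b=g(y)$, so the union over the first-coordinate fibers of the vertical intervals is again an interval with the supremal right endpoint. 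Taking $V_1$ of this section and invoking the hypothesis \eqref{multipleconvolution} gives, pointwise in $z$,
\[
V_1\bigl([(1-t)\otimes_{p,\bar\alpha}\mathrm{hyp}(f)\oplus_{p,\bar\alpha}t\otimes_{p,\bar\alpha}\mathrm{hyp}(g)]\cap(\R_++z)\bigr)\le h(z),
\]
and integrating in $z$ over $(\R_+)^n$ with Fubini yields $V_{n+1}\bigl((1-t)\otimes_{p,\bar\alpha}\mathrm{hyp}(f)\oplus_{p,\bar\alpha}t\otimes_{p,\bar\alpha}\mathrm{hyp}(g)\bigr)\le\int h$.

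Finally I would apply Theorem~\ref{t:lpUhrinBM} to the sets $A=\mathrm{hyp}(f)$, $B=\mathrm{hyp}(g)$, which satisfy $A=\tilde A$, $B=\tilde B$ and lie in $(\R_+)^{n+1}$, with power vector $\bar\alpha$ having $\alpha_i\in(0,1]$ for $i\le n$; the threshold $\alpha_{n+1}\gtrless-(\sum_{i=1}^n\alpha_i^{-1})^{-1}$ and the exponent $\gamma=(\sum_{i=1}^{n+1}\alpha_i^{-1})^{-1}$ match precisely. In the regime $\alpha_{n+1}\ge-(\sum_{i=1}^n\alpha_i^{-1})^{-1}$ this gives $\int h\ge M_{p\gamma}^t(V_{n+1}(\mathrm{hyp}(f)),V_{n+1}(\mathrm{hyp}(g)))=M_{p\gamma}^t(\int f,\int g)$, and in the complementary regime it gives the $\sup_{0<\lambda<1}\min\{\cdots\}$ expression; substituting the hypo-graph volume identity finishes the proof. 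The one point requiring care is the edge cases $\alpha_i\in\{0,1\}$ allowed in the hypothesis (as opposed to the open interval $(0,1]$ stated in Theorem~\ref{t:lpUhrinBM}): for $\alpha_i=1$ the operation is just the $L_p$-Minkowski combination and is covered; for $\alpha_i=0$ one argues by continuity/monotonicity of the curvilinear summation in the power parameter (Proposition~\ref{t:properties} and its Corollary), approximating $\alpha_i=0$ by small positive powers and passing to the limit in the integral inequality. This continuity step — making sure the limit of the sections and their volumes behaves correctly as $\alpha_i\downarrow 0$ — is the main technical obstacle; everything else is a direct transcription of the single-power proof.
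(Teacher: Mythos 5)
Your proposal is correct and follows essentially the same route as the paper: the paper proves this theorem precisely by repeating the hypo-graph argument of Theorem~\ref{t:stronglpbbl} (compression invariance of $\mathrm{hyp}(f)$, the vertical-section identity, the hypothesis \eqref{multipleconvolution}, and Fubini), with Theorem~\ref{t:lpUhrinBM} replacing Theorem~\ref{t:lpcurvfulld}. Your extra remark on handling the boundary powers $\alpha_i\in\{0,1\}$ by monotonicity/approximation is a reasonable refinement that the paper passes over silently.
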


  	\subsection{Normalized $L_p$-Borell-Brascamp-Lieb type inequality}\label{subsection42}
  	
  	The main focus of this subsection is the following normalized version of the $L_{p,\bar{\alpha}}$  Borell-Brascamp-Lieb inequality to  more general space in previous Subsection. 
  	
  	  	Let $H \subset G_{n,k}$ be a linear subspace for $k\in\{0,1,\cdots,n\}$ and denote by $\h$ its orthogonal complement in $\R^n$. For a integrable  function $h \colon \R^n \to \R_+$, Fubini's theorem implies that
  	\begin{equation*}\label{e:sectionalfubini}
  	\int_{\R^n}h(z) dz = \int_{\h} \left( \int_H h(x+y) dy \right) dx. 
  	\end{equation*}

  	\begin{theorem}\label{t:marginaluhrin} Let $p \geq 1$, $p^{-1}+q^{-1}=1$, $t \in (0,1)$, and $\alpha, \beta \in [-\infty, \infty]$ be such that $\alpha + \beta \geq 0$. Let $H\in G_{n,k}$, $k 
  		\in \{1,\dots,n\}$. Let $f,g,h \colon \R^n \to \R_+$ be a triple of integrable functions, and assume that the hypo-graphs $A = \text{hyp}(f)$ and $B = \text{hyp}(g)$ are bounded $F_{\sigma}$-sets in $\R^n$. Assume that $f,g,h$ satisfy the condition 
  		\begin{equation}\label{e:marginalassumpt}
  		h\left(C_{p,\lambda,t}x+D_{p,\lambda,t}y  \right) \geq \avg(f(x),g(y))
  		\end{equation}
  		for all $x,y \in \R^n$ such that $f(x)g(y) > 0$ and all $\lambda \in (0,1)$.  
  		Consider the norms of marginal functions of $f,g$ with respect to $H\in G_{n,k}$ where $k\in\{0,1,\cdots, n\},$
  		\begin{align*}
  		\|f\|_H = \sup_{z \in \h} \int_{H}f(x+z)dx=:\sup_{z \in \h}I(f,z) , \quad \|g\|_H = \sup_{z \in \h} \int_{H}g(x+z)dx =: \sup_{z \in \h} I(g,z). 
  		\end{align*}
  		Then the following inequality holds: 
  	  		\begin{equation}\label{e:superstronglpbbl}
  	\begin{split}
  	&\left(\int_{\R^n}h(x)dx \right) \cdot  M_{p\beta}^t(\|f\|_H^{-1},\|g\|_H^{-1})\\
  	&\geq  \begin{cases} \int_{\h}
   M_{p\delta}^{t}\left(\frac{I(f,u)}{\|f\|_H}, \frac{I(g,v)}{\|g\|_H} \right) dw
  	&\text{if } \frac{\alpha\beta}{\alpha+\beta} \geq -\frac{1}{k},\\
  	\int_{\h} \sup_{0<\lambda<1} \left\{ \min\left\{\left[C_{p,\lambda,t}\right]^{\frac{1}{\delta}}\frac{I(f,u)}{\|f\|_H},\left[D_{p,\lambda,t}\right]^{\frac{1}{\delta}} \frac{I(g,v)}{\|g\|_H}\right\} \right\}dw,
  	&\text{if } \frac{\alpha\beta}{\alpha+\beta} < -\frac{1}{k}, 
  	\end{cases}
  	\end{split}
  	\end{equation}	
  		where the suprema are taken over all ways to write appropriately selected $w \in \h$ in the form $ w= C_{p,\lambda,t}u + D_{p,\lambda,t}v$, and where $\delta =(\alpha^{-1}+\beta^{-1} +k)^{-1}$.

  	\end{theorem}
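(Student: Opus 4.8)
The plan is to deduce this functional inequality from its geometric counterpart, Theorem~\ref{t:sectionallpinequality}, applied to the hypo-graphs of $f$ and $g$; this mirrors the way the $L_p$-Borell--Brascamp--Lieb inequality was derived from the $L_{p,\alpha}$-curvilinear-Brunn--Minkowski inequality in the proof of Theorem~\ref{t:stronglpbbl}. First I would make the usual reduction to $f,g$ bounded with bounded support, so that $A := \text{hyp}(f)$ and $B := \text{hyp}(g)$ are bounded $F_{\sigma}$-sets of positive volume with $A = \tilde A$, $B = \tilde B$, $V_{n+1}(A) = \int_{\R^n} f$ and $V_{n+1}(B) = \int_{\R^n} g$. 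Exactly as in the proof of Theorem~\ref{t:stronglpbbl}, for each $z \in \R^n$,
\[
V_1\big([(1-t)\times_{p,\alpha}A +_{p,\alpha} t\times_{p,\alpha}B]\cap(\R_+ + z)\big) \;=\; \sup_{0<\lambda<1}\ \sup_{z = C_{p,\lambda,t}x + D_{p,\lambda,t}y}\ \avg(f(x),g(y)),
\]
so the hypothesis \eqref{e:marginalassumpt} forces $h(z)$ to dominate the right-hand side, and integrating in $z$ with Fubini's theorem yields $\int_{\R^n} h(x)\,dx \ge V_{n+1}\big((1-t)\times_{p,\alpha}A +_{p,\alpha} t\times_{p,\alpha}B\big)$.

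Next I would translate the ``critical quantities'' of \eqref{e:criticalquantities} for these hypo-graphs into marginal data. For $u \in \h$, slicing $\text{hyp}(f)$ by $\bar H + u = (H+u)\times\R_+$ and applying Fubini gives $V_{A,H}(u) = V_{k+1}(\text{hyp}(f)\cap(\bar H + u)) = \int_H f(x+u)\,dx = I(f,u)$; hence $\|V_{A,H}\|_{\infty} = \|f\|_H$, the set $C_r(V_{A,H})$ equals $\{u \in \h : I(f,u) \ge r\|f\|_H\}$, and the normalized compression $A_H$ is precisely the hypo-graph, taken inside $\h\times\R_+$, of $\bar f \colon \h \to \R_+$, $\bar f(u) := I(f,u)/\|f\|_H$. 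The analogous identities hold for $B = \text{hyp}(g)$ with $\bar g(v) := I(g,v)/\|g\|_H$.

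Now I would apply Theorem~\ref{t:sectionallpinequality} to $A = \text{hyp}(f)$, $B = \text{hyp}(g)$, the subspace $H\in G_{n,k}$ and the given $\alpha,\beta$ (the hypothesis $\alpha+\beta\ge0$ and the constant $\delta=(\alpha^{-1}+\beta^{-1}+k)^{-1}$ are exactly those in its statement, and the two cases are separated according as $\frac{\alpha\beta}{\alpha+\beta}\ge -\frac1k$ or $<-\frac1k$). Combining its conclusion with the lower bound on $\int_{\R^n} h$ from the first paragraph and with $\|V_{A,H}\|_\infty=\|f\|_H$, $\|V_{B,H}\|_\infty=\|g\|_H$ gives
\[
\Big(\int_{\R^n} h(x)\,dx\Big)\, M_{p\beta}^t(\|f\|_H^{-1},\|g\|_H^{-1}) \;\ge\; \begin{cases} V_{n-k+1}\big((1-t)\times_{p,\delta} A_H +_{p,\delta} t\times_{p,\delta} B_H\big), & \tfrac{\alpha\beta}{\alpha+\beta}\ge -\tfrac1k,\\[4pt] V_{n-k+1}\big((1-t)\times^{p,\delta} A_H +^{p,\delta} t\times^{p,\delta} B_H\big), & \tfrac{\alpha\beta}{\alpha+\beta}< -\tfrac1k. \end{cases}
\]
Since $A_H=\text{hyp}(\bar f)$ and $B_H=\text{hyp}(\bar g)$ sit inside the $(n-k)$-dimensional ambient space $\h\times\R_+$, the same function--set correspondence used in the proof of Theorem~\ref{t:stronglpbbl}, now carried out over $\h$, rewrites the right-hand side: in the first case it is $\int_{\h}\sup_{0<\lambda<1}\sup_{w=C_{p,\lambda,t}u+D_{p,\lambda,t}v} M_{p,\delta}^{(t,\lambda)}(\bar f(u),\bar g(v))\,dw$, i.e.\ the right-hand side of \eqref{e:superstronglpbbl}; in the second case the quasi-sum $\times^{p,\delta}$ produces the integrand $\min\{[C_{p,\lambda,t}]^{1/\delta}\bar f(u),[D_{p,\lambda,t}]^{1/\delta}\bar g(v)\}$, again matching \eqref{e:superstronglpbbl}. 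The degenerate values where $\alpha$ or $\beta$ lies in $\{0,\pm\infty\}$, not covered directly by Theorem~\ref{t:sectionallpinequality}, would be handled by a routine limiting argument.

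The step I expect to be the main obstacle is the bookkeeping of the second paragraph: verifying cleanly that slicing a hypo-graph by $\bar H + u$ and normalizing turns the abstract objects $\|V_{A,H}\|_\infty$, $C_r(V_{A,H})$, $A_H$ of \eqref{e:criticalquantities} into the marginal norm $\|f\|_H$, the super-level sets of $I(f,\cdot)$, and the hypo-graph of $I(f,\cdot)/\|f\|_H$, and checking that the ranges of $\alpha,\beta$, the sign constraint $\alpha+\beta\ge0$, and the threshold $\alpha\beta/(\alpha+\beta)$ versus $-1/k$ match the hypotheses of Theorem~\ref{t:sectionallpinequality} including the degenerate cases. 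Once this dictionary is set, the result is a direct functional transcription of Theorem~\ref{t:sectionallpinequality}.
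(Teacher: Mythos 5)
Your proposal is correct and follows essentially the same route as the paper: integrate the fiber-volume identity for the hypo-graphs against the hypothesis \eqref{e:marginalassumpt} to bound $\int_{\R^n}h$ below by $V_{n+1}((1-t)\times_{p,\alpha}\text{hyp}(f)+_{p,\alpha}t\times_{p,\alpha}\text{hyp}(g))$, identify $\|V_{\text{hyp}(f),H}\|_{\infty}=\|f\|_H$ and the normalized compressions with the marginals, and then invoke Theorem~\ref{t:sectionallpinequality}. Your write-up is in fact somewhat more explicit than the paper's (notably the dictionary $A_H=\text{hyp}(I(f,\cdot)/\|f\|_H)$ and the treatment of degenerate $\alpha,\beta$), but the underlying argument is the same.
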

  	
  	\begin{proof} The goal of the proof is to invoke Theorem~\ref{t:sectionallpinequality} at the right moment. As we have seen before in the proof of Theorem~\ref{t:stronglpbbl}, 
  		\[
  		\text{hyp}(f) = \widetilde{\text{hyp}(f)}, \quad V_{n+1}(\text{hyp}(f)) = \int_{\R^n}f(x) dx,
  		\]
  		and the same with $g$ and $h$. Additionally, for $z \in \R^n$, we have 
  		\begin{equation}\label{e:equivalence1}
  		\begin{split}
  		&\!\!\!\!\!V_1([(1-t) \times_{p,\alpha}\text{hyp}(f) +_{p,\alpha} t \times_{p,\alpha}\text{hyp}(g)] \cap (\R_+ + z))\\
  		&= \sup_{0 < \lambda < 1} \left[\sup_{z = C_{p,\lambda,t}x + D_{p,\lambda,t}y} \avg(f(x),g(y))\right].
  		\end{split}
  		\end{equation}
  		Integrating \eqref{e:equivalence1} over $z\in\R^n$ and taking into account the condition \eqref{e:marginalassumpt}, we obtain 
  		\begin{align*}
  		\int_{\R^n}h(z) dz &\geq \int_{\R^n} \sup_{0 < \lambda < 1} \left[\sup_{z = C_{p,\lambda,t}x + D_{p,\lambda,t}y} \avg(f(x),g(y))\right] dz\\
  		&= \int_{\R^n}V_1([(1-t) \times_{p,\alpha}\text{hyp}(f) +_{p,\alpha} t \times_{p,\alpha}\text{hyp}(g)] \cap (\R_+ + z)) dz. 
  		\end{align*}
  		The definitions in equation \eqref{e:criticalquantities} yield 
  		\[
  		\|V_{\text{hyp}(f),H} \|_{\infty} = \|f\|_H, \quad \|V_{\text{hyp}(g),H}\| =\|g\|_H,
  		\]
  		and 
  		\begin{align*}
  		&|(1-t) \times^{p,\delta} \text{hyp}(f)_H +^{p,\delta} t \times^{p,\delta} \text{hyp}(g)_H|= \int_{\h} \Psi(w) dw,
  		\end{align*}
  		where 
\[ \Psi(w)=\sup_{0 < \lambda < 1} \left[\sup_{w = C_{p,\lambda,t}u + D_{p,\lambda,t}v} \min\left\{\left(C_{p,\lambda,t} \right)^{\frac{1}{\delta}} \frac{I(f,u)}{\|V_{A,H} \|_{\infty}},\left(D_{p,\lambda,t} \right)^{\frac{1}{\delta}} \frac{I(g,v)}{\|V_{B,H} \|_{\infty}} \right\} \right],
  		\]
  		with $A = \text{hyp}(f)$ and $B = \text{hyp}(g)$.  Finally, by applying Theorem~~\ref{t:sectionallpinequality}, we conclude the proof.
  		
  	\end{proof}
  When $k=n,$ we obtain the normalized $L_p$-Borell-Brascamp-Lieb inequality as follows.
  	\begin{coro}\label{t:marginaluhrin1} Let $p \geq 1$, $p^{-1}+q^{-1}=1$, $t \in (0,1)$, and $\alpha, \beta \in [-\infty, \infty]$ be such that $\alpha + \beta \geq 0$. Let $f,g,h \colon \R^n \to \R_+$ be a triple of integrable functions, and assume that the hypo-graphs $A = \text{hyp}(f)$ and $B = \text{hyp}(g)$ are bounded $F_{\sigma}$-sets in $\R^n$. Assume that $f,g,h$ satisfy the condition 
  	\begin{equation}\label{e:marginalassumpt}
  	h\left(C_{p,\lambda,t}x+D_{p,\lambda,t}y  \right) \geq \avg(f(x),g(y))
  	\end{equation}
  	for all $x,y \in \R^n$ such that $f(x)g(y) > 0$ and all $\lambda \in (0,1)$.  
	Then the following inequality holds: 
  	\begin{equation}\label{e:superstronglpbbl}
  	\begin{split}
  	&\left(\int_{\R^n}h(x)dx \right) \cdot  M_{p\beta}^t(\|f\|_{\infty}^{-1},\|g\|_{\infty}^{-1})\\
  	&\geq  \begin{cases} 
  	M_{p\omega}^{t}\left(\frac{\int_{\R^n}f(x)dx}{\|f\|_{\infty}}, \frac{\int_{\R^n}g(x)dx}{\|g\|_{\infty}} \right) 
  	&\text{if } \frac{\alpha\beta}{\alpha+\beta} \geq -\frac{1}{n},\\
  \sup_{0<\lambda<1} \left\{ \min\left\{\left[C_{p,\lambda,t}\right]^{\frac{1}{\delta}}\frac{\int_{\R^n}f(x)dx}{\|f\|_H},\left[D_{p,\lambda,t}\right]^{\frac{1}{\delta}} \frac{\int_{\R^n}g(x)dx}{\|g\|_H}\right\} \right\},
  	&\text{if } \frac{\alpha\beta}{\alpha+\beta} < -\frac{1}{n}, 
  	\end{cases}
  	\end{split}
  	\end{equation}	
  	where the suprema are taken over all ways to write appropriately selected $w \in \h$ in the form $ w= C_{p,\lambda,t}u + D_{p,\lambda,t}v$, and where $\omega =(\alpha^{-1}+\beta^{-1} +n)^{-1}$.
  \end{coro}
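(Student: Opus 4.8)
The plan is to read the inequality off from the unnormalized $L_p$-Borell-Brascamp-Lieb inequality (Theorem~\ref{t:stronglpbbl}) applied to suitably rescaled functions, the passage from the power $\alpha$ to the power $\gamma_0:=\frac{\alpha\beta}{\alpha+\beta}$ being supplied by the H\"older-type inequality \eqref{e:lpholder}. This is precisely the mechanism already exploited inside the proof of Theorem~\ref{t:marginaluhrin}, here in the degenerate situation $k=n$, in which $\h=\{o\}$ is trivial and the marginal integrals over $H$ collapse to full integrals over $\R^n$; conceptually the whole point is that normalisation reduces everything to a case that has already been established.

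First I would normalise. Since $A=\text{hyp}(f)$ and $B=\text{hyp}(g)$ are bounded, $f$ and $g$ are bounded functions with bounded support, so (discarding the trivial case where one of them vanishes a.e.) $\|f\|_\infty,\|g\|_\infty\in(0,\infty)$; I set $F:=f/\|f\|_\infty$ and $G:=g/\|g\|_\infty$, so that $\|F\|_\infty=\|G\|_\infty=1$ while $\int_{\R^n}F=\|f\|_\infty^{-1}\int_{\R^n}f$ and $\int_{\R^n}G=\|g\|_\infty^{-1}\int_{\R^n}g$. Next I would build the correct competitor for $h$: put
\[
\tilde h(z):=M_{p\beta}^{t}\!\left(\|f\|_\infty^{-1},\|g\|_\infty^{-1}\right)h(z),\qquad\text{so that}\qquad \int_{\R^n}\tilde h=M_{p\beta}^{t}\!\left(\|f\|_\infty^{-1},\|g\|_\infty^{-1}\right)\int_{\R^n}h .
\]
Using $M_{p\beta}^{t}(\cdot,\cdot)=\sup_{0<\mu<1}M_{p,\beta}^{(t,\mu)}(\cdot,\cdot)\ge\avgb(\cdot,\cdot)$, then \eqref{e:marginalassumpt}, and then \eqref{e:lpholder} with $a=f(x)$, $b=g(y)$, $c=\|f\|_\infty^{-1}$, $d=\|g\|_\infty^{-1}$ (the standing hypothesis $\alpha+\beta\ge0$ places us in the first branch of \eqref{e:lpholder}, whose output exponent is $\gamma_0=\frac{\alpha\beta}{\alpha+\beta}$), I would check the intertwining
\[
\tilde h\!\left(C_{p,\lambda,t}x+D_{p,\lambda,t}y\right)\ \ge\ \avg(f(x),g(y))\,\avgb(\|f\|_\infty^{-1},\|g\|_\infty^{-1})\ \ge\ M_{p,\gamma_0}^{(t,\lambda)}\!\left(F(x),G(y)\right)
\]
for all $x,y\in\R^n$ with $F(x)G(y)=f(x)g(y)>0$ and all $\lambda\in(0,1)$.

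Finally I would invoke Theorem~\ref{t:stronglpbbl} for the triple $(F,G,\tilde h)$ with the power $\gamma_0$ in the role of $\alpha$: its hypothesis is exactly the displayed intertwining, its dichotomy $\gamma_0\ge-\frac1n$ versus $\gamma_0<-\frac1n$ is $\frac{\alpha\beta}{\alpha+\beta}\ge-\frac1n$ versus $<-\frac1n$, and the exponent in its conclusion is $\frac{\gamma_0}{1+n\gamma_0}=\big(\gamma_0^{-1}+n\big)^{-1}=\big(\alpha^{-1}+\beta^{-1}+n\big)^{-1}=\omega$. Substituting the values of $\int_{\R^n}\tilde h$, $\int_{\R^n}F$, $\int_{\R^n}G$ into that conclusion establishes \eqref{e:superstronglpbbl}, with the normalizing factor $M_{p\beta}^{t}(\|f\|_\infty^{-1},\|g\|_\infty^{-1})$ already sitting on the left-hand side; and since for a hypo-graph the one-dimensional section function is the function itself, so that $\|V_{\text{hyp}(f)}\|_\infty=\|f\|_\infty$, this is indeed the $k=n$ reading of Theorem~\ref{t:marginaluhrin}. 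I expect the only point that genuinely needs care to be the familiar one for power means: the extremal choices $\alpha,\beta\in\{0,\pm\infty\}$ and the borderline case $\alpha+\beta=0$ (where $\gamma_0=\pm\infty$) must be handled by the standard limiting conventions for $M_{p,\cdot}^{(t,\lambda)}$ and for \eqref{e:lpholder}, exactly as in the proof of Theorem~\ref{t:marginaluhrin}; the rest is bookkeeping.
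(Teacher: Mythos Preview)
Your route---normalize $f,g$, multiply $h$ by the constant $M_{p\beta}^t(\|f\|_\infty^{-1},\|g\|_\infty^{-1})$, use the H\"older step \eqref{e:lpholder} to pass from exponent $\alpha$ to $\gamma_0=\alpha\beta/(\alpha+\beta)$, and then invoke Theorem~\ref{t:stronglpbbl} with parameter $\gamma_0$---is correct and is precisely the mechanism behind the paper's argument. The paper derives the corollary by specializing Theorem~\ref{t:marginaluhrin} to $k=n$, which ultimately rests on the same pair of ingredients (the H\"older inequality \eqref{e:lpholder} together with the curvilinear $L_p$-Brunn--Minkowski/Borell--Brascamp--Lieb inequality). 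Your direct reduction to Theorem~\ref{t:stronglpbbl} is cleaner in that it avoids one layer of sectional notation; conversely, going through Theorem~\ref{t:marginaluhrin} makes the connection to the general $k$ case transparent. One small remark: your closing identification ``$\|V_{\text{hyp}(f)}\|_\infty=\|f\|_\infty$, so this is the $k=n$ reading of Theorem~\ref{t:marginaluhrin}'' conflates two indexings---that identity is the $k=0$ section picture in Theorem~\ref{t:sectionallpinequality}, whereas $k=n$ in Theorem~\ref{t:marginaluhrin} gives $\|f\|_H=\int_{\R^n} f$---but this does not affect your actual argument.

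One technical caveat, which the paper itself glosses over: your chain uses $M_{p\beta}^t(c,d)=\sup_{0<\mu<1}M_{p,\beta}^{(t,\mu)}(c,d)\ge M_{p,\beta}^{(t,\lambda)}(c,d)$ for every $\lambda$. This quasilinearization identity holds only for $\beta\ge 0$; when $\beta<0$ one has $C_{p,\lambda,t}+D_{p,\lambda,t}\le 1$ and $x\mapsto x^{1/\beta}$ decreasing, so the supremum becomes an infimum and the pointwise bound $\tilde h\ge M_{p,\gamma_0}^{(t,\lambda)}(F,G)$ can fail for some $\lambda$. The paper's own proofs of Theorems~\ref{t:normalizedlp}, \ref{t:sectionallpinequality}, and~\ref{t:marginallpbmi} invoke the same identity without this restriction, so this is a shared issue rather than a flaw peculiar to your write-up; still, a fully rigorous treatment of the range $\beta<0$ would require either an additional argument or an explicit restriction to $\beta\ge 0$.
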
	
  	
  	\subsection{Normalized $L_p$-curvilinear-Brunn-Minkowski inequality for measures}\label{subsection43}
  	In this Subsection, we further consider the form for normalized $L_{p,\alpha}$-curvilinear-Brunn-Minkowski inequality for sets in terms of measure.
  	Before stating our next results, we  introduce some preliminary notions. 
Let $\mu$ be a measure on $\R^n$ having a bounded density $\phi \colon \R^n \to \R_+$, and let $A \subset \R^n$ be a compact set of positive $\mu$ measure, and $H\in G_{n,k}$ for some $0\leq k 
\leq n$.  Consider the function 
$
\mu_{A,H}(u) = \mu(A \cap (H-u)), \ u \in \h, 
$
and set 
\[
m_{A,H}^{\mu}: =\sup\{\mu_{A,H}(u): u \in \h\}.
\]
We consider the normalized super-level sets of $\mu_{A,H}$ given for $r\in[0,1]$
\begin{equation*}\label{e:levelsets}
C_{A,H}^{\mu}(r) = \left\{u \in \h \colon \mu_{A,H}(u) \geq r \cdot m_{A,H}^{\mu} \right\}\subset H^{\perp}\in G_{n,n-k}.
\end{equation*}
In particular, using Fubini's theorem, we see that
\begin{equation}\label{e:nicefubini}
\begin{split}
\mu(A) = \int_{\R^n} (\phi \chi_A)(z) dz
= \int_{\h} \int_H (\phi \chi_A)(u+w)dwdu
=\int_{\h} \mu_{A,H}(u) du
= m_{A,H}\int_0^1 \mu(C_{A,H}^{\mu}(r))dr.
\end{split}
\end{equation}

\begin{theorem}\label{t:marginallpbmi} Let $p \geq 1$, $p^{-1} + q^{-1} = 1$, $t \in (0,1)$, and $\alpha,\beta \in (-\infty,+\infty)$, with $\alpha+\beta \geq 0$, and $H\in G_{n,k}$  for $k\in\{0,\cdots,n\}$. Let $\mu$ be a measure on $\R^n$ given by $d\mu(x) = \phi(x) dx$, where $\phi \colon \R^n \to \R_+$ is bounded $\alpha$-concave function whose support contains the origin. For any compact sets $A,B \subset \R^n$, each having positive $\mu$-measure, one has 
	\begin{equation}\label{e:marginallpbmi0}
	\begin{split}
	&\!\!\!\! \mu((1-t) \cdot_p A +_p t \cdot_p B) \cdot M_{p\beta}^t \left(m_{A,H}^{-1},m_{B,H}^{-1}\right)\\
	&\geq   \begin{cases} \int_0^1 \mu((1-t) \cdot_p C_{A,H}(r) +_p t \cdot_p C_{B,H}(r))dr, &\text{if } \frac{\alpha\beta}{\alpha+\beta} \geq -\frac{1}{k},\\
	\int_{\h} \sup \left\{ \min\left\{\left[C_{p,\lambda,t}\right]^{\frac{1}{\delta}}\frac{\mu_{A,H}(u)}{m_{A,H}},\left[D_{p,\lambda,t}\right]^{\frac{1}{\delta}} \frac{\mu_{B,H}(v)}{m_{B,H}}\right\} \right\}dw, &\text{if } \frac{\alpha\beta}{\alpha+\beta} < -\frac{1}{k}, 
	\end{cases}
	\end{split}
	\end{equation}
	where the suprema are taken over all $\lambda \in (0,1)$ and all ways to write $w = C_{p,\lambda,t}u + D_{p,\lambda,t}v$ with $u,v \in \h$, and where
	$\delta =(\alpha^{-1}+\beta^{-1} +k)^{-1}$. 
\end{theorem}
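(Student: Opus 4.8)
The plan is to transfer the statement to the set-level normalized $L_p$-curvilinear-Brunn--Minkowski inequality of Theorem~\ref{t:sectionallpinequality} (equivalently, to the functional Theorem~\ref{t:marginaluhrin}) by passing to hypo-graphs of the density restricted to $A$ and to $B$. First I would introduce the bounded $F_{\sigma}$-sets
\[
A' = \text{hyp}(\phi\chi_A) = \{(x,s) : x \in A,\ 0 \le s \le \phi(x)\}, \qquad B' = \text{hyp}(\phi\chi_B),
\]
which are compressions of themselves ($A' = \widetilde{A'}$, $B' = \widetilde{B'}$) and satisfy $V_{n+1}(A') = \mu(A)$ and $V_{n+1}(B') = \mu(B)$; a standard density reduction lets me assume $A,B$ are compact so that these sets are genuinely bounded $F_{\sigma}$-sets of positive volume.

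The crucial geometric input is the pointwise inequality
\[
\phi\big(C_{p,\lambda,t}x + D_{p,\lambda,t}y\big) \ \ge\ \big[C_{p,\lambda,t}\,\phi(x)^{\alpha} + D_{p,\lambda,t}\,\phi(y)^{\alpha}\big]^{1/\alpha}
\qquad \text{for } x,y \in \supp\phi,\ \lambda \in (0,1),
\]
which I would derive from the $\alpha$-concavity of $\phi$: since $p\ge 1$, H\"older's inequality gives $C_{p,\lambda,t}+D_{p,\lambda,t}\le 1$, so $C_{p,\lambda,t}x+D_{p,\lambda,t}y$ is an honest convex combination of $x$, $y$ and the origin $o\in\supp\phi$; applying $\alpha$-concavity to this threefold combination and discarding the (nonnegative) contribution carried by $o$ yields the bound. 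Together with the definition \eqref{e:LYZMinkowskicombo}, this shows that the fibre of $(1-t)\times_{p,\alpha}A' +_{p,\alpha} t\times_{p,\alpha}B'$ over a point $z$ of its base $(1-t)\cdot_p A +_p t\cdot_p B$ is $\big[0,\ \sup_{0<\lambda<1}\sup_{z=C_{p,\lambda,t}x+D_{p,\lambda,t}y}M_{p,\alpha}^{(t,\lambda)}(\phi(x),\phi(y))\big]\subseteq[0,\phi(z)]$, and empty off the base, so integrating in $z$ gives
\[
V_{n+1}\big((1-t)\times_{p,\alpha}A' +_{p,\alpha} t\times_{p,\alpha}B'\big) \ \le\ \int_{(1-t)\cdot_p A +_p t\cdot_p B}\phi \ =\ \mu\big((1-t)\cdot_p A +_p t\cdot_p B\big).
\]

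Next I would apply Theorem~\ref{t:sectionallpinequality} to the sets $A',B'$ with the subspace $H\in G_{n,k}$ and the exponents $\alpha$ (from the concavity of $\phi$) and $\beta$, and identify the resulting quantities with the measure-theoretic ones: a Fubini computation in the splitting $\R^n = H\oplus\h$ gives $\|V_{A',H}\|_{\infty} = \sup_{u}\mu_{A,H}(u) = m_{A,H}$ and $\|V_{B',H}\|_{\infty} = m_{B,H}$, while the normalized compression $A'_H$ is precisely the hypo-graph of $u\mapsto \mu_{A,H}(u)/m_{A,H}$ (so $\widetilde{A'_H}=A'_H$), whose normalized super-level sets are the sets $C_{A,H}(r)$. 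In the case $\frac{\alpha\beta}{\alpha+\beta}\ge -\tfrac1k$, Theorem~\ref{t:sectionallpinequality} produces $V_{n-k+1}\big((1-t)\times_{p,\delta}A'_H +_{p,\delta} t\times_{p,\delta}B'_H\big)$ on the right; running this through the monotonicity of the curvilinear operation in its exponent down to $-\infty$ (Proposition~\ref{t:properties}(1)) followed by the layer-cake identity as in Proposition~\ref{t:refinementfulld}, together with \eqref{e:nicefubini}, turns it into $\int_0^1 \mu\big((1-t)\cdot_p C_{A,H}(r) +_p t\cdot_p C_{B,H}(r)\big)\,dr$. In the complementary case $\frac{\alpha\beta}{\alpha+\beta} < -\tfrac1k$, Theorem~\ref{t:sectionallpinequality} directly outputs the quasi-curvilinear combination $V_{n-k+1}\big((1-t)\times^{p,\delta}A'_H +^{p,\delta} t\times^{p,\delta}B'_H\big)$, whose volume equals the stated $\int_{\h}\sup\{\cdots\}\,dw$ after computing fibres. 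Combining this with the displayed upper bound for $\mu((1-t)\cdot_p A+_p t\cdot_p B)$ yields \eqref{e:marginallpbmi0}, with $\delta=(\alpha^{-1}+\beta^{-1}+k)^{-1}$; the argument is uniform in $k\in\{0,\dots,n\}$.

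I expect the pointwise inequality for $\phi$ to be the main obstacle: it is exactly where the sub-unit normalization of the $L_p$-coefficients meets the exponent $\alpha$, and the hypothesis $o\in\supp\phi$ is what lets one absorb the defect $1-C_{p,\lambda,t}-D_{p,\lambda,t}$ into a convex combination with the origin; some care with the sign of $\alpha$ (and correspondingly with the value $\phi(o)$) is needed when carrying this out. The remaining bookkeeping -- matching norms, super-level sets and compressions between the functional and measure pictures, and the $F_\sigma$/density reductions so that Theorem~\ref{t:sectionallpinequality} and Proposition~\ref{t:refinementfulld} apply -- is routine.
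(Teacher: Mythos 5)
Your proposal is correct and matches the paper's proof in all essentials: the paper takes $f=\phi\chi_A$, $g=\phi\chi_B$, $h=\phi\chi_{(1-t)\cdot_p A+_p t\cdot_p B}$, verifies the hypothesis \eqref{e:marginalassumpt} by exactly the pointwise inequality you derive ($\alpha$-concavity of $\phi$, $o\in\supp\phi$, and $C_{p,\lambda,t}+D_{p,\lambda,t}\le 1$), applies Theorem~\ref{t:marginaluhrin} --- which is itself Theorem~\ref{t:sectionallpinequality} applied to the hypo-graphs --- and finishes with the same identifications $\|f\|_H=m_{A,H}$, $I(f,u)=\mu_{A,H}(u)$ and the same super-level-set/layer-cake conversion you outline. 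Your only deviation is to unroll the intermediate functional Theorem~\ref{t:marginaluhrin} and apply Theorem~\ref{t:sectionallpinequality} directly to $\text{hyp}(\phi\chi_A)$ and $\text{hyp}(\phi\chi_B)$, which is not a genuinely different route (and your caveat about the sign of $\alpha$ in the pointwise inequality applies equally to the paper's own one-line justification of it).
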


  	\begin{proof}
  		
  		Let $\phi \colon \R^n \to \R_+$ be the density of the measure $\mu$. Consider the functions $f = \phi \chi_A$, $g = \phi \chi_B$, and $h = \phi \chi_{(1-t) \cdot_p A+_p t \cdot_p B}.$  Then,  for $\lambda, t\in (0,1)$,  using H\"older's inequality together with the $\alpha$-concavity of $\phi$ and the fact that the support of $\phi$ contains the origin, we may write 
  		\begin{align*}
  		h\left(C_{p,\lambda,t}x+D_{p,\lambda,t}y  \right)  &=\phi \chi_{(1-t) \cdot_p A+_p t \cdot_p B} \left(C_{p,\lambda,t}x+D_{p,\lambda,t}y  \right)\\
  		&= \phi\left(C_{p,\lambda,t}x+D_{p,\lambda,t}y  \right)\\
  		&\geq \avg(f(x),g(y)),
  		\end{align*}
  		holds for any $x\in A, y\in B$, and so the triple $f,g,h$ satisfy the assumptions \eqref{e:marginalassumpt} of Theorem~\ref{t:marginaluhrin}. 
  		
  		Moreover, we see that 
  		\begin{align*}
  		\|f\|_H = \sup_{u \in \h} \int_H f(x+u)du
  		= \sup_{x \in \h} \int_H(\phi \chi_A)(x+u) du
  		= \sup_{u \in \h} \mu(A \cap (H-u)) = m_{A,H},
  		\end{align*}
  		and also that, for each $u \in \h$,  the marginal $I(f,u) = \mu(A \cap (H - u)) = \mu_{A,H}(u)$. Similarly, $\|g\|_H = m_{B,H}$, and for any $v \in \h$, we have $I(g,u) =\mu(B \cap (H - v)) = \mu_{B,H}(v)$. 
  		
  		Suppose that $\beta \in (-\infty,\infty)$ is such that $\alpha+\beta \geq 0$. By applying inequality \eqref{e:superstronglpbbl} to the triple $f,g,h,$ we obtain
  		\begin{equation}\label{e:verywell}
  		\begin{split}
  		&\!\!\!\!\!\!\!\!\!\mu((1-t) \cdot_p A +_p t \cdot_p B) \cdot \sup_{0 < \lambda < 1} \avgb(m_{A,H}^{-1},m_{B,H}^{-1})\\
  		&=  \int_{\R^n}h(z) dz\cdot \sup_{0 < \lambda < 1} \avgb(\|f\|_H^{-1},\|g\|_H^{-1})\\
  		&\geq   \begin{cases} \int_{\h}
  		\sup\left\{ M_{p, \delta}^{(t,\lambda)}\left(\frac{I(f,u)}{\|f\|_H}, \frac{I(g,v)}{\|g\|_H} \right) \right\}  dw,
  		&\text{if } \frac{\alpha\beta}{\alpha+\beta} \geq -\frac{1}{k},\\
  		\int_{\h} \sup \left\{ \min\left\{\left[C_{p,\lambda,t}\right]^{\frac{1}{\delta}}\frac{I(f,u)}{\|f\|_H},\left[D_{p,\lambda,t}\right]^{\frac{1}{\delta}} \frac{I(g,v)}{\|g\|_H}\right\} \right\}dw,
  		&\text{if } \frac{\alpha\beta}{\alpha+\beta} < -\frac{1}{k}, 
  		\end{cases}\\
  		&=   \begin{cases} \int_{\h}
  		\sup\left\{ M_{p, \delta}^{(t,\lambda)}\left(\frac{\mu_{A,H}(u)}{m_{A,H}}, \frac{\mu_{B,H}(v)}{m_{B,H}} \right) \right\}  dw,
  		&\text{if } \frac{\alpha\beta}{\alpha+\beta} \geq -\frac{1}{k},\\
  		\int_{\h} \sup \left\{ \min\left\{\left[C_{p,\lambda,t}\right]^{\frac{1}{\delta}}\frac{\mu_{A,H}(u)}{m_{A,H}},\left[D_{p,\lambda,t}\right]^{\frac{1}{\delta}} \frac{\mu_{B,H}(v)}{m_{B,H}}\right\} \right\}dw,
  		&\text{if } \frac{\alpha\beta}{\alpha+\beta} < -\frac{1}{k}, 
  		\end{cases}
  		\end{split}
  		\end{equation}
  		where the suprema are taken over all $\lambda \in (0,1)$ and over all ways to write appropriately selected $w \in \h$ in the form $ w= C_{p,\lambda,t}u + D_{p,\lambda,t}v$ where $\delta =(\alpha^{-1}+\beta^{-1} +k)^{-1}$. 
  		
  		For the case when $\frac{\alpha\beta}{\alpha + \beta} \geq - \frac{1}{k}$, we claim that 
  		\begin{align*}
  		&\!\!\!\!\!\!\!\!\!\!\int_{\h}
  		\sup\left\{ M_{p, \delta}^{(t,\lambda)}\left(\frac{\mu_{A,H}(u)}{m_{A,H}}, \frac{\mu_{B,H}(v)}{m_{B,H}} \right) \colon  w= C_{p,\lambda,t}u + D_{p,\lambda,t}v, 0 < \lambda <1 \right\}  dw\\
  		&\geq \int_0^1 \mu((1-t) \cdot_p C_{A,H}(r) +_p t \cdot_p C_{B,H}(r)) dr. 
  		\end{align*}
  		Indeed, if we  consider the function $M \colon \h \to \R_+$ defined by 
  		\[
  		M(w) = \sup\left\{ M_{p, \delta}^{(t,\lambda)}\left(\frac{\mu_{A,H}(u)}{m_{A,H}}, \frac{\mu_{B,H}(v)}{m_{B,H}} \right)\colon w= C_{p,\lambda,t}u + D_{p,\lambda,t}v, 0 < \lambda <1  \right\},
  		\]
  		then it is not difficult to see that 
  		\begin{align*}
  		\{w \in \h \colon M(w) \geq r \} &\supset (1-t) \cdot_p \left\{u \in \h \colon \frac{\mu_{A,H}(u)}{m_{A,H}} \geq r \right\} +_p t \cdot_p \left\{v \in \h \colon \frac{\mu_{B,H}(v)}{m_{B,H}} \geq r \right\}\\
  		&= (1-t) \cdot_p C_{A,H}(r) +_p t \cdot_p C_{B,H}(r).
  		\end{align*}
  		
  		Therefore, for any $\beta$ such that $\alpha+\beta \geq 0$ and $\frac{\alpha\beta}{\alpha + \beta} \geq - \frac{1}{k}$, using the inequality \eqref{e:verywell} together with Fubini's theorem and the above inclusion, we obtain 
  		\begin{align*}
  		&\!\!\!\!\!\!\!\mu((1-t) \cdot_p A +_p t \cdot_p B) \cdot M_{p\beta}^t \left(m_{A,H}^{-1},m_{B,H}^{-1}\right)\\
  		&=\mu((1-t) \cdot_p A +_p t \cdot_p B) \cdot  \sup_{0 < \lambda < 1} \avgb(m_{A,H}^{-1},m_{B,H}^{-1})\\
  		& \geq \int_{\h}M(w) dw\\
  		&= \int_0^1 \mu(\{w \in \h \colon M(w) \geq r \})dr\\
  		&\geq \int_0^1\mu((1-t) \cdot_p C_{A,H}(r) +_p t \cdot_p C_{B,H}(r)) dr,
  		\end{align*}
  		which verifies Theorem~\ref{t:marginallpbmi} in the case that $\alpha+\beta \geq 0$ and $\frac{\alpha\beta}{\alpha + \beta} \geq - \frac{1}{k}$. The case for $\frac{\alpha\beta}{\alpha + \beta}<- \frac{1}{k}$ can be proved in a similar way as in Theorem \ref{t:marginaluhrin}.
  	\end{proof}
  By choosing $k = n$ in Theorem~\ref{t:marginallpbmi}, the right-hand side of inequality \eqref{e:marginallpbmi0} satisfies 
  \[
  \int_0^1 \mu((1-t) \cdot_p C_{A,H}(r) +_p t \cdot_p C_{B,H}(r))dr \geq \sup_{0 < \lambda <1} M_{p,1}^{(t,\lambda)}\left(\frac{\mu(A)}{m_{A,H}}, \frac{\mu(B)}{m_{B,H}} \right).
  \]
  Therefore, using the identity \eqref{e:nicefubini}, we obtain the following corollary. 
  
  \begin{coro}
  	Let $p \geq 1$, $p^{-1} + q^{-1} = 1$, $t \in (0,1)$, and  $H\in G_{n,k}$ for some $k\in\{0,\cdots,n\}$. Let $\alpha,\beta \in (-\infty,\infty)$ be such that $\alpha + \beta \geq 0$ and $\alpha\beta/(\alpha+\beta) \geq -1/k$. Let $\mu$ be a measure on $\R^n$ whose density is a bounded $\alpha$-concave function whose support contains the origin. For any compact sets $A,B \subset \R^n$, each having positive $\mu$-measure, one has 
  	\begin{equation*}\label{e:bonnesson}
  	\begin{split}
  	&\!\!\!\!\!\!\!\mu((1-t) \cdot_p A +_p t \cdot_p B) \cdot \sup_{0 < \lambda < 1} \avgb\left(m_{A,H}^{-1},m_{B,H}^{-1}\right)\\
  	&\geq \sup_{0 < \lambda <1} M_{p,1}^{(t,\lambda)}\left(\frac{\mu(A)}{m_{A,H}}, \frac{\mu(B)}{m_{B,H}} \right).
  	\end{split}
  	\end{equation*}
  	In particular, choosing $p=1$, we obtain 
  	\begin{align*}
  	\mu((1-t) A + t B) &\geq \left[(1-t) (m_{A,H})^{-\beta} + t (m_{B,H})^{-\beta} \right]^{-\frac{1}{\beta}} \cdot \left[(1-t) \frac{\mu(A)}{m_{A,H}} + t \frac{\mu(B)}{m_{B,H}} \right]. 
  	\end{align*}
  \end{coro}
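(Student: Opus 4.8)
The plan is to derive this as an immediate consequence of Theorem~\ref{t:marginallpbmi}. Since the hypotheses include $\alpha + \beta \geq 0$ and $\frac{\alpha\beta}{\alpha + \beta} \geq -\frac{1}{k}$, Theorem~\ref{t:marginallpbmi} applies in its first regime and gives
\[
\mu((1-t) \cdot_p A +_p t \cdot_p B) \cdot \sup_{0 < \lambda < 1} \avgb\!\left(m_{A,H}^{-1},m_{B,H}^{-1}\right) \;\geq\; \int_0^1 \mu\!\big((1-t) \cdot_p C_{A,H}(r) +_p t \cdot_p C_{B,H}(r)\big)\, dr,
\]
where $M_{p\beta}^t = \sup_{0<\lambda<1}\avgb$. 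So it remains to show that the right-hand integral is at least $\sup_{0<\lambda<1} M_{p,1}^{(t,\lambda)}\!\big(\mu(A)/m_{A,H},\, \mu(B)/m_{B,H}\big)$.

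For this lower bound I would argue as in the closing step of the proof of Theorem~\ref{t:marginallpbmi}. Fix $\lambda \in (0,1)$. For every $r$ one has $(1-t) \cdot_p C_{A,H}(r) +_p t \cdot_p C_{B,H}(r) \supseteq C_{p,\lambda,t}C_{A,H}(r) + D_{p,\lambda,t}C_{B,H}(r)$, and the families $r \mapsto C_{A,H}(r)$, $r \mapsto C_{B,H}(r)$ of normalized super-level sets are non-increasing in $r$. Writing $a = \mu_{A,H}/m_{A,H}$ and $b = \mu_{B,H}/m_{B,H}$, so that $C_{A,H}(r) = \{a \geq r\}$ and $C_{B,H}(r) = \{b \geq r\}$, the set $\{w : C_{p,\lambda,t}C_{A,H}(r) + D_{p,\lambda,t}C_{B,H}(r) \ni w\}$ is contained in the $r$-super-level set of the sup-convolution $w \mapsto \sup\{ C_{p,\lambda,t}a(u) + D_{p,\lambda,t}b(v) : w = C_{p,\lambda,t}u + D_{p,\lambda,t}v\}$. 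Integrating this inclusion in $r$ via the layer-cake formula, invoking the identity \eqref{e:nicefubini} (which yields $\int_0^1 \mu(C_{A,H}(r))\,dr = \mu(A)/m_{A,H}$ and the analogue for $B$), and applying Minkowski's integral inequality for $p \geq 1$ — equivalently, the triangle inequality for the weighted $\ell^p$-mean $M_p^t$ — gives
\[
\int_0^1 \mu\!\big((1-t) \cdot_p C_{A,H}(r) +_p t \cdot_p C_{B,H}(r)\big)\, dr \;\geq\; C_{p,\lambda,t}\,\frac{\mu(A)}{m_{A,H}} + D_{p,\lambda,t}\,\frac{\mu(B)}{m_{B,H}} \;=\; M_{p,1}^{(t,\lambda)}\!\left(\frac{\mu(A)}{m_{A,H}}, \frac{\mu(B)}{m_{B,H}}\right).
\]
Taking the supremum over $\lambda \in (0,1)$ and combining with the display above proves the main inequality. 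The stated $p=1$ specialization is then obtained by setting $C_{1,\lambda,t} = 1-t$, $D_{1,\lambda,t} = t$ (so the $\lambda$-dependence disappears), using $M_{1,\beta}^{(t,\lambda)}(m_{A,H}^{-1},m_{B,H}^{-1}) = \big[(1-t)m_{A,H}^{-\beta} + t\, m_{B,H}^{-\beta}\big]^{1/\beta}$ and $M_{1,1}^{(t,\lambda)}(x,y) = (1-t)x + ty$, and rearranging.

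The step I expect to be the main obstacle is the lower bound on $\int_0^1 \mu(\cdots)\,dr$ in the second paragraph. One cannot proceed slice-by-slice in $r$, since the naive inequality $\mu(C_{p,\lambda,t}E + D_{p,\lambda,t}F) \geq C_{p,\lambda,t}\mu(E) + D_{p,\lambda,t}\mu(F)$ is false for a general $\alpha$-concave $\mu$; one must instead work with the sup-convolution function and estimate $\int_0^1 \mu(\{\text{it} \geq r\})\,dr$ as a single integral, mirroring the argument with $M(w)$ in the proof of Theorem~\ref{t:marginallpbmi}. Here the hypothesis that the support of the density contains the origin is exactly what permits the $L_p$ coefficients $C_{p,\lambda,t}, D_{p,\lambda,t}$ — whose sum is at most $1$ — to be absorbed without loss, and the exchange of $\sup_\lambda$ with $\int_0^1 dr$ has to be handled by fixing $\lambda$ first and passing to the supremum only at the end.
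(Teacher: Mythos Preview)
Your overall strategy matches the paper exactly: apply Theorem~\ref{t:marginallpbmi} in its first regime and then combine the resulting level-set integral with the identity \eqref{e:nicefubini}. The paper states the key intermediate bound
\[
\int_0^1 V_{n-k}\big((1-t)\cdot_p C_{A,H}(r) +_p t\cdot_p C_{B,H}(r)\big)\,dr \;\ge\; \sup_{0<\lambda<1} M_{p,1}^{(t,\lambda)}\!\left(\tfrac{\mu(A)}{m_{A,H}},\tfrac{\mu(B)}{m_{B,H}}\right)
\]
without further argument, so you are right to flag it as the crux.

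However, the mechanism you propose for this step does not go through. Two concrete problems:
\begin{enumerate}
\item The inclusion $C_{p,\lambda,t}C_{A,H}(r)+D_{p,\lambda,t}C_{B,H}(r)\subseteq\{N'\ge r\}$, with $N'(w)=\sup\{C_{p,\lambda,t}a(u)+D_{p,\lambda,t}b(v):w=C_{p,\lambda,t}u+D_{p,\lambda,t}v\}$, is false when $p>1$. If $a(u)\ge r$ and $b(v)\ge r$ one only gets $N'(w)\ge (C_{p,\lambda,t}+D_{p,\lambda,t})r$, and for $p>1$ the sum $C_{p,\lambda,t}+D_{p,\lambda,t}$ is strictly less than $1$, so $N'(w)\ge r$ need not hold.
\item Even granting an inclusion of that shape, it points the wrong way: an inclusion $cE_r+dF_r\subseteq\{\text{something}\ge r\}$ yields, after layer-cake, an \emph{upper} bound on $\int_0^1 V_{n-k}(cE_r+dF_r)\,dr$, whereas you need a \emph{lower} bound. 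The ``mirroring'' of the $M(w)$ argument from Theorem~\ref{t:marginallpbmi} runs in the opposite direction there (from $\int M$ down to the level-set integral), so it cannot simply be replayed to go back up.
\end{enumerate}
The invocation of Minkowski's integral inequality does not rescue this: that inequality is an upper bound on an $L^p$-norm of a sum, and there is no evident way to convert it into the needed lower bound on $\int_0^1 V_{n-k}(cE_r+dF_r)\,dr$ in terms of $c\!\int a + d\!\int b$.

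For $p=1$ your argument \emph{does} work, because then $C_{1,\lambda,t}+D_{1,\lambda,t}=1$ and one may argue slice-by-slice: the Brunn--Minkowski inequality in $H^{\perp}$ together with Jensen gives $V_{n-k}((1-t)E_r+tF_r)\ge (1-t)V_{n-k}(E_r)+tV_{n-k}(F_r)$ for each $r$, and integrating over $r\in[0,1]$ with \eqref{e:nicefubini} yields the stated $p=1$ specialization directly. The difficulty is genuinely confined to $p>1$, where the coefficients do not sum to $1$ and neither the slice-by-slice route nor your sup-convolution route closes the gap.
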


  	\section{$L_{p, \bar{\alpha}}$-Minkowski first type and  isoperimetric inequalities for measures }\label{section5}

 Recall the definition for $L_p$ mixed volume for convex bodies in (\ref{variationconvexbodyp}) derived from the variation formula for volume (Lebesgue measure) in terms of the $L_p$ Minkowski summation for convex bodies. In Subsection \ref{subsection51}, we will define the surface area for measures with respect to the multiple $L_p$ curvilinear summation for Borel sets. 
  Additionally, we will examine the corresponding $L_p$ versions of the Minkowski's first inequality and isoperimetric inequality, etc. Based on the $L_{p,\bar{\alpha}}$ Borell-Brascamp-Lieb inequality, we further propose the definition of multiple supremal-convolution for functions together with the $L_p$  Minkowski's first inequality and isoperimetric inequality with respect to this new convolution definition.
  	
  	  	\subsection{$L_{p, \bar{\alpha}}$-Minkowski first type and  isoperimetric inequalities for sets}\label{subsection51}
  	Firstly, we pose the concept of $F$-concavity with respect to the  $L_{p, \bar{\alpha}}$ curvilinear summation for sets defined in Section \ref{section2}.
  
  	\begin{defi}\label{definitionoffconcavity} Let $p \in [1,\infty)$, $\bar{\alpha}=(\alpha_1,\cdots,\alpha_{n+1})\in [0,\infty]^{n+1}$, and $F \colon \R_+ \to \R$ be an invertible differentiable function.  We say that a measure $\mu$ on $\R^{n+1}$ is $F(t)$-concave with respect to the $L_{p,\bar{\alpha}}$-curvilinear summation of  bounded Borel sets $A, B\subset\R^{n+1}$ and any $t \in [0,1]$, if
  		\begin{equation}\label{e:functionalMink1stassumption}
  		\mu((1-t) \times_{p,\bar{\alpha}} A +_{p,\bar{\alpha}} t \times_{p,\bar{\alpha}} B)\geq F^{-1}((1-t) F(\mu(A)) + t F(\mu(B)))
  		\end{equation}
  	\end{defi}
\noindent Particularly, if  $\bar{\alpha}=(1,\cdots, 1, \alpha)$ in $n+1$-dimensional space, we call the measure $\mu$ above is $F(t)$-concave with respect to the $L_{p,\alpha}$-curvilinear summation of  bounded Borel sets $A, B.$ Furthermore, when $\alpha=1$, it recovers the definition for \cite{Wu} $F(t)$-concave of $L_p$ Minkowski summation for sets in $\R^{n+1}$.
  Inspired by the works in \cite{ColesantiFragala, Klartag, RX}, we introduce the surface area extension of $L_{p,\bar{\alpha}}$-curvilinear summation  for measures below.
  	
  	\begin{defi}Let $\mu$ be a Borel measure on $\R^{n+1}$, $p \geq 1$, and $\bar{\alpha}=(\alpha_1,\cdots,\alpha_{n+1})\in [0,\infty]^{n+1}$. We define the $L_{p,\bar{\alpha}}$-$\mu$-surface area of a $\mu$-integrable set $A$ with respect to a $\mu$-integrable set $B$ by 
  		\[
  		S_{\mu,p, \bar{\alpha}}(A,B) := \liminf_{\e \to 0^+} \frac{	\mu(A +_{p,\bar{\alpha}} \e \times_{p,\bar{\alpha}} B) -\mu(A)}{\e}.
  		\]
  		If $\mu$ is the Lebesgue measure on $\R^{n+1}$, we will simply denote it as $S_{p,\bar{\alpha}}$.
  	\end{defi}
  	
 Next we will establish the $L_p$-Minkowski's first inequality in terms of the $L_{p,\bar{\alpha}}$-$\mu$-surface area for $p\geq1$ below.
  	
  	\begin{theorem}\label{t:functionalMink1st} Let $p \in[1,\infty)$, $\bar{\alpha}=(\alpha_1,\cdots,\alpha_{n+1})\in [0,\infty]^{n+1}$ and $F \colon \R_+\to \R$ be a differentiable invertible function. Let $\mu$ be a Borel measure on $\R^{n+1}$, and assume that $\mu$ is $F(t)$-concave of $L_{p,\bar{\alpha}}$-curvilinear summation for  any two  $\mu$-measurable sets $A,B\subset\R^{n+1}$.
  		Then the following inequality holds: 
  		\begin{equation}\label{e:functionalMink1stconclusion}
  		S_{\mu,p, \bar{\alpha}}(A,B) \geq S_{\mu,p, \bar{\alpha}}(A,A) + \frac{F\left(\mu(B)\right) - F\left(\mu(A)\right)}{F'\left(\mu(A) \right)}.
  		\end{equation}
  		When $\mu(A) = \mu(B)$,  the following isoperimetric type inequality holds: 
  		\[
  		S_{\mu,p, \bar{\alpha}}(A,B) \geq S_{\mu,p, \bar{\alpha}}(A,A). 
  		\]
  	\end{theorem}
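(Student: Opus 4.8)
The plan is to differentiate the $F(t)$-concavity inequality \eqref{e:functionalMink1stassumption} at $t=0^+$ and recognize the derivative of the left side as the $L_{p,\bar{\alpha}}$-$\mu$-surface area $S_{\mu,p,\bar{\alpha}}(A,B)$. First I would fix the two sets $A,B$ with $\mu(A),\mu(B)>0$ and consider the function
\[
\varphi(t) := \mu\bigl((1-t) \times_{p,\bar{\alpha}} A +_{p,\bar{\alpha}} t \times_{p,\bar{\alpha}} B\bigr), \qquad t \in [0,1].
\]
By the $F(t)$-concavity hypothesis, $\varphi(t) \geq F^{-1}\bigl((1-t)F(\mu(A)) + tF(\mu(B))\bigr) =: \psi(t)$, and at $t=0$ both sides equal $\mu(A)$. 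Hence $\varphi(t) - \varphi(0) \geq \psi(t) - \psi(0)$ for all small $t>0$, so dividing by $t$ and taking $\liminf_{t\to 0^+}$ gives
\[
\liminf_{t \to 0^+} \frac{\varphi(t) - \varphi(0)}{t} \geq \psi'(0) = \frac{F(\mu(B)) - F(\mu(A))}{F'(\mu(A))},
\]
where the last equality follows from the chain rule applied to $\psi = F^{-1}\circ \ell$ with $\ell$ the affine function $\ell(t) = (1-t)F(\mu(A)) + tF(\mu(B))$, using $(F^{-1})'(y) = 1/F'(F^{-1}(y))$ and $F^{-1}(\ell(0)) = \mu(A)$.

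The remaining step is to relate $\liminf_{t\to 0^+} \frac{\varphi(t)-\varphi(0)}{t}$ to the quantities $S_{\mu,p,\bar{\alpha}}(A,B)$ and $S_{\mu,p,\bar{\alpha}}(A,A)$ appearing in \eqref{e:functionalMink1stconclusion}. Here I would use the homogeneity/reparametrization built into the $L_{p,\bar{\alpha}}$-curvilinear operations together with the definition
\[
S_{\mu,p,\bar{\alpha}}(A,B) = \liminf_{\e \to 0^+} \frac{\mu(A +_{p,\bar{\alpha}} \e \times_{p,\bar{\alpha}} B) - \mu(A)}{\e}.
\]
The point is that the combination $(1-t) \times_{p,\bar{\alpha}} A +_{p,\bar{\alpha}} t \times_{p,\bar{\alpha}} B$ should, after a change of the parameter $\e = \e(t)$ with $\e(t)/t \to 1$ as $t \to 0^+$, be comparable to $A +_{p,\bar{\alpha}} \e \times_{p,\bar{\alpha}} B$ up to a first-order contribution coming from the $(1-t)$-scaling of the $A$-block, which is exactly $S_{\mu,p,\bar{\alpha}}(A,A)$. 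Thus I expect an identity (or at least a one-sided inequality in the needed direction) of the shape
\[
\liminf_{t\to 0^+} \frac{\varphi(t) - \varphi(0)}{t} \leq S_{\mu,p,\bar{\alpha}}(A,B) - S_{\mu,p,\bar{\alpha}}(A,A),
\]
which, combined with the previous display, yields \eqref{e:functionalMink1stconclusion}. The isoperimetric consequence is then immediate: when $\mu(A) = \mu(B)$ the fraction $\frac{F(\mu(B)) - F(\mu(A))}{F'(\mu(A))}$ vanishes, leaving $S_{\mu,p,\bar{\alpha}}(A,B) \geq S_{\mu,p,\bar{\alpha}}(A,A)$.

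The main obstacle I anticipate is the second step: making rigorous the first-order expansion that separates the "$B$-direction" surface contribution $S_{\mu,p,\bar{\alpha}}(A,B)$ from the "$A$-self" contraction term $S_{\mu,p,\bar{\alpha}}(A,A)$, i.e. controlling how $(1-t)\times_{p,\bar{\alpha}} A$ differs from $A$ to first order in $t$ and checking that the $L_p$-coefficients $C_{p,\lambda,t}, D_{p,\lambda,t}$ behave as $C_{p,\lambda,t} \to 1$, $D_{p,\lambda,t} \to 0$ with the right rate as $t \to 0^+$ (uniformly enough in $\lambda$, or after optimizing in $\lambda$). Since $\varphi$ need not be differentiable, one must work consistently with $\liminf$ and be careful that the decomposition is a genuine inequality in the direction claimed; the subadditivity/monotonicity of $\mu$ under the curvilinear operations, the volume-invariance of compression, and the structure of $+_{p,\bar{\alpha}}$ should supply what is needed.
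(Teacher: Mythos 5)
Your first step is fine and coincides with the general spirit of the paper's argument: apply the $F(t)$-concavity hypothesis, note that both sides agree at the base parameter, and lower-bound a liminf difference quotient by the derivative of the explicit lower bound, computed via $(F^{-1})'(y)=1/F'(F^{-1}(y))$. But the second step, which you yourself flag as the obstacle, is a genuine gap, and it is not a technicality. You apply the concavity to the pair $(A,B)$ at parameter $t$, which gives a lower bound on
\[
\liminf_{t\to 0^+}\frac{\mu\big((1-t)\times_{p,\bar{\alpha}}A+_{p,\bar{\alpha}}t\times_{p,\bar{\alpha}}B\big)-\mu(A)}{t},
\]
and then you need the \emph{upper} bound of this quantity by $S_{\mu,p,\bar{\alpha}}(A,B)-S_{\mu,p,\bar{\alpha}}(A,A)$. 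Even granting the algebraic identity $(1-t)\times_{p,\bar{\alpha}}A+_{p,\bar{\alpha}}t\times_{p,\bar{\alpha}}B=(1-t)\times_{p,\bar{\alpha}}\big(A+_{p,\bar{\alpha}}\tfrac{t}{1-t}\times_{p,\bar{\alpha}}B\big)$, the resulting decomposition splits a $\liminf$ of a sum, and $\liminf$'s only go the wrong way for you: you would need something like $\limsup_{t\to0^+}\frac{\mu((1-t)\times_{p,\bar{\alpha}}A)-\mu(A)}{t}\leq -S_{\mu,p,\bar{\alpha}}(A,A)$, i.e.\ a comparison between the lower \emph{left} Dini derivative of $\e\mapsto\mu(\e\times_{p,\bar{\alpha}}A)$ at $\e=1$ and the lower \emph{right} one (which is what $S_{\mu,p,\bar{\alpha}}(A,A)$ encodes, via $A+_{p,\bar{\alpha}}\e\times_{p,\bar{\alpha}}A=(1+\e)\times_{p,\bar{\alpha}}A$). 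With $\mu$ only a Borel measure and the surface areas defined as liminfs, no such two-sided comparison is available, so your ``expected identity'' is unproven and cannot be waved through.

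The paper avoids this entirely by a different bookkeeping: it starts from the quantity that actually appears in $S_{\mu,p,\bar{\alpha}}(A,B)$ and rewrites
\[
A\oplus_{p,\bar{\alpha}}(\e\times_{p,\bar{\alpha}}B)=\Big[(1-\e)\times_{p,\bar{\alpha}}\Big(\tfrac{1}{1-\e}\times_{p,\bar{\alpha}}A\Big)\Big]\oplus_{p,\bar{\alpha}}(\e\times_{p,\bar{\alpha}}B),
\]
then applies the $F(t)$-concavity to the \emph{dilated} set $\tfrac{1}{1-\e}\times_{p,\bar{\alpha}}A$ and $B$ with parameter $\e$. This produces a lower bound $Q(\e)=F^{-1}\big[(1-\e)F\big(\mu(\tfrac{1}{1-\e}\times_{p,\bar{\alpha}}A)\big)+\e F(\mu(B))\big]$ with $Q(0)=\mu(A)$, so $S_{\mu,p,\bar{\alpha}}(A,B)\geq Q'(0)$ with the inequality automatically in the right direction; the self term $S_{\mu,p,\bar{\alpha}}(A,A)$ then emerges from $\frac{d}{d\e}\mu(\tfrac{1}{1-\e}\times_{p,\bar{\alpha}}A)\big|_{\e=0}$ (using $\tfrac{1}{1-\e}=1+\e+\cdots$ and the one-sided derivative at $\e=0^+$ only). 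To repair your proposal you would essentially have to redo this rescaling trick, so as it stands the argument is incomplete at its decisive step.
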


  	\begin{proof}
  		
  	Consider  first that $\alpha_i \neq 0, +\infty$ for all $1\leq i\leq n+1$. It follows from  \eqref{e:functionalMink1stassumption} and the assumption for measure $\mu$ being $F(t)$-concave of $L_{p,\bar{\alpha}}$-curvilinear summation for  any two  $\mu$-measurable sets, for any $\e>0$ sufficiently small, that
  		\begin{eqnarray*}
  		\mu(A \oplus_{p,\bar{\alpha}} (\e \times_{p,\bar{\alpha}} B))  
  		&=&\mu\Big(\left[(1-\e) \times_{p,\bar{\alpha}} \left(\frac{1}{1-\e} \times_{p,\bar{\alpha}}A\right)\right] \oplus_{p,\bar{\alpha}} (\e \times_{p,\bar{\alpha}} B)\Big)\\
  		&\geq& F^{-1}\left\{(1-\e) F\left(\mu(\frac{1}{1-\e} \times_{p,\bar{\alpha}}A) \right) + \e F\left( \mu(B) \right)\right\}\label{concaveinequality}.
  		\end{eqnarray*}
  	For convenience, we denote the following function with parameter $\e$ as
  		\begin{equation}\label{inversesumofF}
  		Q_{F, \mu,s,p}(\e) := F^{-1}\left[(1-\e) F\left(\mu(\frac{1}{1-\e} \times_{p,\bar{\alpha}}A) \right) + \e F\left( \mu(B)\right)\right].
  		\end{equation}
  	It is easy to check that  the equality in (\ref{concaveinequality}) holds true when  $\e = 0,$ i.e., $Q_{F,\mu,s,p}(0) = \mu(A)$. Furthermore, we obtain the relation between the $L_{p,\bar{\alpha}}$-$\mu$-surface area and the function $Q_{F,\mu,s,p}$,
  		\begin{align*}
  		S_{\mu,p, \bar{\alpha}}(A,B) &=\liminf_{\e \to 0^+} \frac{	\mu(A +_{p,\bar{\alpha}} t \times_{p,\bar{\alpha}} B) -\mu(A)}{\e}\\
  		&\geq \liminf_{\e \to 0^+} \frac{Q_{F,\mu,s,p}(\e) - Q_{F,\mu,s,p}(0)}{\e} = Q_{F,\mu,s,p}'(0).
  		\end{align*}
Observe that 
  		\begin{align*}
  		\frac{d}{d\e}\left[\mu(\frac{1}{1-\e} \times_{p,\bar{\alpha}} A) \right] \biggr\rvert_{\e=0}
  		&= \lim_{\e \to 0^+} \frac{\mu( [(1+\e+\e^2+\cdots) \times_{p,\bar{\alpha}} A])- \mu(A)}{\e}
  		= S_{\mu,p, \bar{\alpha}}(A,A). 
  		\end{align*}
Therefore, it follows from 	 (\ref{inversesumofF}) that
  		\begin{align*}
  		S_{\mu,p, \bar{\alpha}}(A,B) \geq G_{F,\mu,s,p}'(0)
  		&= \frac{(1-\e) S_{\mu,p, \bar{\alpha}}(A,A)  \cdot F'\left(\mu(\frac{1}{1-\e} \times_{p,\bar{\alpha}} A )\right) \mid_{\e=0} }{F'\left(\mu(A) \right)}\\
  		&\quad+\frac{- F \left(\mu([(1-\e) \times_{p,\bar{\alpha}} A]) \right)\mid_{\e=0}+F\left(\mu(B)\right)}{F'\left(\mu(A) \right)}\\
  		&=S_{\mu,p, \bar{\alpha}}(A,A) + \frac{F\left(\mu(B)\right) -F\left(\mu(A) \right)}{F'\left(\mu(A)\right)},
  		\end{align*}
  		as desired.
  		
  		The proofs when $\alpha_i=0, \infty$ follow in similar procedures and thus omitted.
  	\end{proof}
  	In particular, we obtain a natural  corollary of Theorem~\ref{t:functionalMink1st} for Lebesgue measure $\mu=V_{n+1}(\cdot),$ then the $L_p$ surface area with respect to the $L_{p,\bar{\alpha}}$-curvilinear summation has the following form,
  		\begin{align*}
  	S_{p, \bar{\alpha}}(A,B) &=\lim_{\e \to 0^+} \frac{	V_{n+1}(A +_{p,\bar{\alpha}} \e\times_{p,\bar{\alpha}} B) -V_{n+1}(A)}{\e}.
  	\end{align*}
  	Moreover, we have its corresponding Minkowski's type inequality and isoperimetric inequality accordingly by Theorem \ref{t:lpUhrinBM}.
  	\begin{coro} 
  		Let $p \in [1,\infty)$ and $\bar{\alpha}=(\alpha_1,\cdots,,\alpha_{n+1})\in (0,1]^{n+1}$. Then, for any bounded measurable sets $A, B\subset \R^{n+1}$, one has 
  		\[
  		S_{p,\bar{\alpha}}(A,B) \geq S_{p,\bar{\alpha}}(A,A) +\frac{V_{n+1}(B)^{p\gamma}-V_{n+1}(A)^{p\gamma}}{p\gamma V_{n+1}(A)^{p\gamma-1}},
  		\]
  		where $\gamma =\left(\sum_{i=1}^{n+1}\alpha_i^{-1}\right)^{-1}.$
  		In particular, when $V_{n+1}(A) = V_{n+1}(B)>0$, we obtain the following isoperimetric type inequality:
  		\[
  		S_{p,\bar{\alpha}}(A,B) \geq S_{p,\bar{\alpha}}(A,A).
  		\] 	
  	\end{coro}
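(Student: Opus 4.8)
The plan is to specialize Theorem~\ref{t:functionalMink1st} to the Lebesgue measure $\mu = V_{n+1}(\cdot)$, for which the appropriate choice of $F$ is dictated by Theorem~\ref{t:lpUhrinBM}. First I would observe that Theorem~\ref{t:lpUhrinBM} says precisely that, under the hypothesis $\bar{\alpha} \in (0,1]^{n+1}$ (so that in particular $\alpha_{n+1} \geq -(\sum_{i=1}^n \alpha_i^{-1})^{-1}$ automatically holds, since the right-hand side is negative while $\alpha_{n+1}>0$), one has
\[
V_{n+1}((1-t) \otimes_{p,\bar{\alpha}} A \oplus_{p,\bar{\alpha}} t \otimes_{p,\bar{\alpha}} B) \geq M_{p\gamma}^t\big(V_{n+1}(A), V_{n+1}(B)\big)
\]
with $\gamma = (\sum_{i=1}^{n+1} \alpha_i^{-1})^{-1}$. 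Now $M_{p\gamma}^t(a,b) = [(1-t)a^{p\gamma} + t b^{p\gamma}]^{1/(p\gamma)} = F^{-1}((1-t)F(a) + tF(b))$ for the invertible differentiable function $F(s) := s^{p\gamma}$ on $\R_+$. Hence the Lebesgue measure is $F(t)$-concave with respect to the $L_{p,\bar{\alpha}}$-curvilinear summation, in the sense of Definition~\ref{definitionoffconcavity}, with this particular $F$.

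Next I would simply feed this choice of $F$ into the conclusion \eqref{e:functionalMink1stconclusion} of Theorem~\ref{t:functionalMink1st}. We have $F(s) = s^{p\gamma}$ and $F'(s) = p\gamma\, s^{p\gamma - 1}$, so
\[
\frac{F(\mu(B)) - F(\mu(A))}{F'(\mu(A))} = \frac{V_{n+1}(B)^{p\gamma} - V_{n+1}(A)^{p\gamma}}{p\gamma\, V_{n+1}(A)^{p\gamma - 1}},
\]
and since here $\mu = V_{n+1}$, the quantity $S_{\mu,p,\bar{\alpha}}$ is exactly $S_{p,\bar{\alpha}}$ as defined just before the corollary. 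Substituting gives
\[
S_{p,\bar{\alpha}}(A,B) \geq S_{p,\bar{\alpha}}(A,A) + \frac{V_{n+1}(B)^{p\gamma} - V_{n+1}(A)^{p\gamma}}{p\gamma\, V_{n+1}(A)^{p\gamma - 1}},
\]
which is the first displayed inequality of the corollary. The isoperimetric statement then follows at once: if $V_{n+1}(A) = V_{n+1}(B) > 0$ the added fraction vanishes, leaving $S_{p,\bar{\alpha}}(A,B) \geq S_{p,\bar{\alpha}}(A,A)$.

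I expect the only genuine subtlety — not really an obstacle, but the point that deserves a sentence of care — is checking that the hypotheses of Theorem~\ref{t:functionalMink1st} are met, namely that $V_{n+1}$ really is $F(t)$-concave with respect to $L_{p,\bar{\alpha}}$-curvilinear summation for \emph{all} pairs of bounded measurable $A, B \subset \R^{n+1}$ of positive volume, not merely those satisfying $A = \tilde A$, $B = \tilde B$. This is where Theorem~\ref{compressionvolume} (monotonicity of volume under compression, extended to the multiple-parameter setting) is invoked: since compression is volume-preserving and $V_{n+1}((1-t)\otimes_{p,\bar{\alpha}} A \oplus_{p,\bar{\alpha}} t\otimes_{p,\bar{\alpha}} B) \geq V_{n+1}((1-t)\otimes_{p,\bar{\alpha}} \tilde A \oplus_{p,\bar{\alpha}} t\otimes_{p,\bar{\alpha}} \tilde B)$, the inequality for compressed sets upgrades to arbitrary bounded measurable sets, so the $F(t)$-concavity hypothesis is indeed satisfied in full generality and the corollary follows.
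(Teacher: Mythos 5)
Your proposal is correct and follows essentially the same route as the paper: the paper obtains this corollary precisely by specializing Theorem~\ref{t:functionalMink1st} to $\mu=V_{n+1}$ with $F(s)=s^{p\gamma}$, the required $F(t)$-concavity being supplied by the $L_{p,\bar{\alpha}}$-curvilinear-Brunn-Minkowski inequality of Theorem~\ref{t:lpUhrinBM}. Your closing remark about upgrading from compressed sets to arbitrary bounded measurable sets via the compression monotonicity (Theorem~\ref{compressionvolume}, extended to vector powers) is a point the paper leaves implicit, and it is handled appropriately.
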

  	
Following the ideas in \cite{RX,RX2}, except for the case that the measure $\mu$ can be $F(t)$-concave of $L_{p,\bar{\alpha}}$-curvilinear summation for  any two  $\mu$-measurable sets, 
we propose  the composite of the function $F$ and measure $\mu$---$F\circ\mu$ in terms of the $L_{p,\bar{\alpha}}$-curvilinear summation: given a measure $\mu$ on $\R^n$ which is $F(t)$-concave in terms of Bounded $A,B \subset \R^n$,  we define the $L_p$-$\mu$-surface area of the set $A$ with respect to the set $B$---$\mathbb{V}_{p,F}^{\bar{\alpha}, \mu}(A,B)$ as
  	\begin{equation}\label{mixedvolume}
  	\mathbb{V}_{p,F}^{\bar{\alpha}, \mu}(A,B) := F'(1) \cdot \liminf_{\e\to 0^+} \frac{\mu(A+_{p,\bar{\alpha}}\e\times_{p,\bar{\alpha}}B) - \mu(A)}{\e}, \end{equation}
  	and denote by
  	\begin{equation}\label{surfacearea}
  	\mathbb{M}_{p,F}^{\bar{\alpha}, \mu}(A):= \frac{1}{F'(1)}\cdot \mu(A) -\frac{d}{d\e}^{-} \biggr\rvert_{\e=1} \mu(\e \times_{p,\bar{\alpha}} A).
  	\end{equation} 
  	We futher need the following concavity property of $F\circ\mu$ with respect of the $L_p$-Minkowski convex combination.
  	\begin{lemma}\label{t:extragoodcoro}
  		Let $p \geq 1$, $F \colon \R_+ \to \R$ be a strictly increasing differentiable function and $\mu$ be a measure on $\R^n$ that is absolutely continuous with respect to the Lebesgue measure. Suppose that  $F\circ \mu$ is concave with bounded Borel sets in terms of $L_{p,\bar{\alpha}}$-curvilinear summation.  Then, for any bounded Borel sets $A,B$, the maps  
  		\[
  		\e \mapsto F(\mu(\e \times_{p,\bar{\alpha}} A)), \quad \e \mapsto F(\mu(A+_{p,\bar{\alpha}} \e \times_{p,\bar{\alpha}} B))
  		\]
  		are concave on $[0,\infty).$
  	\end{lemma}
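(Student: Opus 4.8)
The plan is to deduce both concavity statements from the assumed curvilinear concavity of $F\circ\mu$, by recognizing each of the two curves $\e\mapsto \e\times_{p,\bar{\alpha}}A$ and $\e\mapsto A+_{p,\bar{\alpha}}\e\times_{p,\bar{\alpha}}B$ as a ``curvilinear segment'' along which \eqref{e:functionalMink1stassumption} applies directly. Since $F$ is strictly increasing (hence invertible), the hypothesis of the lemma is equivalent, via Definition~\ref{definitionoffconcavity}, to the statement that
\[
F\big(\mu((1-t)\times_{p,\bar{\alpha}}C+_{p,\bar{\alpha}}t\times_{p,\bar{\alpha}}D)\big)\ \geq\ (1-t)F(\mu(C))+tF(\mu(D))
\]
for all bounded Borel sets $C,D\subset\R^{n+1}$ and all $t\in[0,1]$; this, together with the monotonicity of $\mu$ and of $F$, is the only input that will be used.

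For the map $\e\mapsto F(\mu(\e\times_{p,\bar{\alpha}}A))$, I would fix $\e_0,\e_1>0$ and $s\in[0,1]$, set $\e_s=(1-s)\e_0+s\e_1$, and apply the displayed inequality with $C=\e_0\times_{p,\bar{\alpha}}A$, $D=\e_1\times_{p,\bar{\alpha}}A$ and parameter $t=s$. The heart of the argument is then the dilation identity
\[
(1-s)\times_{p,\bar{\alpha}}(\e_0\times_{p,\bar{\alpha}}A)\ +_{p,\bar{\alpha}}\ s\times_{p,\bar{\alpha}}(\e_1\times_{p,\bar{\alpha}}A)\ =\ \e_s\times_{p,\bar{\alpha}}A .
\]
To verify it I would use the coordinatewise description of the operations together with the two structural properties of the scalar action already exploited in the proof of Theorem~\ref{t:functionalMink1st}, namely $1\times_{p,\bar{\alpha}}A=A$ and $c_1\times_{p,\bar{\alpha}}(c_2\times_{p,\bar{\alpha}}A)=(c_1c_2)\times_{p,\bar{\alpha}}A$: in each coordinate the left-hand side is controlled by the quantity $C_{p,\lambda,s}\e_0^{1/p}+D_{p,\lambda,s}\e_1^{1/p}$, and $\sup_{0<\lambda<1}\big(C_{p,\lambda,s}\e_0^{1/p}+D_{p,\lambda,s}\e_1^{1/p}\big)=\e_s^{1/p}$ by Hölder's inequality (the same $M_\lambda,\theta_\lambda$ computation used in Lemma~\ref{t:1dlemma} and Theorem~\ref{compressionvolume}), so the union over $0<\lambda<1$ collapses to the single dilate on the right. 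Granting this, the displayed inequality becomes exactly $F(\mu(\e_s\times_{p,\bar{\alpha}}A))\geq(1-s)F(\mu(\e_0\times_{p,\bar{\alpha}}A))+sF(\mu(\e_1\times_{p,\bar{\alpha}}A))$, i.e. concavity on $(0,\infty)$; concavity on all of $[0,\infty)$ then follows from right-continuity at $\e=0$, which holds because $\mu$ is absolutely continuous.

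The map $\e\mapsto F(\mu(A+_{p,\bar{\alpha}}\e\times_{p,\bar{\alpha}}B))$ is treated in the same way with $C=A+_{p,\bar{\alpha}}\e_0\times_{p,\bar{\alpha}}B$ and $D=A+_{p,\bar{\alpha}}\e_1\times_{p,\bar{\alpha}}B$: one checks the companion identity
\[
(1-s)\times_{p,\bar{\alpha}}(A+_{p,\bar{\alpha}}\e_0\times_{p,\bar{\alpha}}B)\ +_{p,\bar{\alpha}}\ s\times_{p,\bar{\alpha}}(A+_{p,\bar{\alpha}}\e_1\times_{p,\bar{\alpha}}B)\ =\ A+_{p,\bar{\alpha}}\e_s\times_{p,\bar{\alpha}}B ,
\]
again by reducing to a one-coordinate statement and invoking the quasilinearization identity for the $L_p$ coefficients, and then feeds it into the curvilinear concavity of $F\circ\mu$. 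I expect the only real obstacle to be a fully rigorous proof of these two distributivity identities for general bounded Borel sets: the $\bigcup_{0<\lambda<1}$ built into $+_{p,\bar{\alpha}}$ must be unwound and matched with a single dilate, a matching that is transparent on the box-type building blocks attached to the coordinate hyperplanes (where the nested-dilate computation above gives equality outright) but which then has to be transferred to general sets through the same $F_\sigma$/density approximation used throughout Sections~\ref{section3}--\ref{section4}. Everything else is immediate, and the remaining cases $\alpha_i\in\{0,+\infty\}$ follow by the usual limiting interpretation of $M_{p,\alpha_i}^{(t,\lambda)}$, exactly as in Theorem~\ref{t:functionalMink1st}.
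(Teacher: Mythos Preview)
Your approach is essentially identical to the paper's: both proofs hinge on the distributivity identity
\[
A+_{p,\bar{\alpha}} ((1-t)\e_1 + t \e_2) \times_{p,\bar{\alpha}} B \;=\; (1-t) \times_{p,\bar{\alpha}} [A+_{p,\bar{\alpha}} \e_1 \times_{p,\bar{\alpha}} B] +_{p,\bar{\alpha}} t \times_{p,\bar{\alpha}}[A +_{p,\bar{\alpha}} \e_2 \times_{p,\bar{\alpha}} B]
\]
(and its analogue for the pure dilation), followed by a direct application of the $F\circ\mu$ concavity hypothesis. The paper simply asserts this identity ``by definition of $L_p$-curvilinear summation'' without further justification, so your concern about unwinding the $\bigcup_{0<\lambda<1}$ via quasilinearization is, if anything, more careful than what the paper does; your endpoint and limiting remarks are likewise additions rather than omissions.
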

  	
  	\begin{proof} Let $t \in [0,1]$ and $\e_1,\e_2 \geq 0$.
  	 Therefore, by definition of $L_p$-curvilinear summation, 
  		\[
  		A+_{p,\bar{\alpha}} ((1-t)\e_1 + t \e_2) \times_{p,\bar{\alpha}} B = (1-t) \times_{p,\bar{\alpha}} [A+_{p,\bar{\alpha}} \e_1 \times_{p,\bar{\alpha}} B] +t \times_{p,\bar{\alpha}}[A + \e_2 \times_{p,\bar{\alpha}} B].
  		\]
  	It follows from the fact  $F\circ\mu$ is concave with respect to bounded sets in terms of $L_{p,\bar{\alpha}}$-curvilinear summation that
  		\begin{align*}
  		&\!\!\!\!\!\!\!\!\!\!F(\mu(A+_{p,\bar{\alpha}} ((1-t)\e_1 + t \e_2) \times_{p,\bar{\alpha}} B)) \\
  		&= F(\mu((1-t) \times_{p,\bar{\alpha}} [A+_{p,\bar{\alpha}} \e_1 \times_{p,\bar{\alpha}} B] +t \times_{p,\bar{\alpha}}[A + \e_2 \times_{p,\bar{\alpha}} B]))\\
  		&\geq (1-t) F(\mu(A+_{p,\bar{\alpha}} \e_1 \times_{p,\bar{\alpha}} B))+ t F(\mu(A+_{p,\bar{\alpha}} \e_2 \times_{p,\bar{\alpha}} B)),
  		\end{align*}
  		as desired.
  	The proof of the second inequality assertion follows in similar lines. 
  	\end{proof}
  	
  	Together with the definition of $L_p$-$\mu$-surface area (\ref{mixedvolume}), (\ref{surfacearea}) and Lemma \ref{t:extragoodcoro}, we are now prepared to establish the following isoperimetric type inequality. 
  	
  	\begin{theorem}\label{t:lpisoperimetricinequalitygeneral}
  		Let $p \geq 1$, $F \colon \R_+ \to \R$ be a strictly increasing differentiable function and $\mu$ be a measure on $\R^n$ that is absolutely continuous with respect to the Lebesgue measure. Suppose that $\mu$ is $F(t)$-concave with respect bounded Borel sets.  Then for any bounded Borel sets $A,B$, one has 
  		\[
  		\mathbb{V}_{p,F}^{\bar{\alpha}, \mu}(A,B) + F'(1) \mathbb{M}_{p,F}^{\bar{\alpha}, \mu}(A) \geq \frac{F'(1)[F(\mu(B))-F(\mu(A))]}{F'(\mu(A))}+\mu(A),
  		\]
  		with equality only if and only if $A=B$.
  	\end{theorem}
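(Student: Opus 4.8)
The plan is to mimic the proof of Theorem~\ref{t:functionalMink1st}, replacing the role of $\mu$ by the composite $F \circ \mu$, which is concave with respect to the $L_{p,\bar{\alpha}}$-curvilinear summation by hypothesis, and then exploit the concavity of the single-variable maps furnished by Lemma~\ref{t:extragoodcoro}. First I would record the identity, valid for small $\e>0$ by the definition of the $L_{p,\bar{\alpha}}$-curvilinear summation,
\[
A +_{p,\bar{\alpha}} \e \times_{p,\bar{\alpha}} B = (1-\e) \times_{p,\bar{\alpha}} \left(\tfrac{1}{1-\e} \times_{p,\bar{\alpha}} A\right) \oplus_{p,\bar{\alpha}} \left(\e \times_{p,\bar{\alpha}} B\right),
\]
and apply the $F(t)$-concavity of $\mu$ to get
\[
F\big(\mu(A +_{p,\bar{\alpha}} \e \times_{p,\bar{\alpha}} B)\big) \geq (1-\e) F\big(\mu(\tfrac{1}{1-\e} \times_{p,\bar{\alpha}} A)\big) + \e F(\mu(B)).
\]
Both sides agree at $\e = 0$, so dividing by $\e$ and letting $\e \to 0^+$, the right-hand side produces a derivative that I will compute via the chain rule; this is where the quantities $\mathbb{M}_{p,F}^{\bar{\alpha},\mu}(A)$ and the ratio $F'(\mu(A))^{-1}$ enter.

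Next I would differentiate the right-hand side at $\e = 0$. Writing $g(\e) := F(\mu(\tfrac{1}{1-\e} \times_{p,\bar{\alpha}} A))$, which is well defined and (by Lemma~\ref{t:extragoodcoro} applied to the reparametrization $\e \mapsto \tfrac{1}{1-\e}$, or directly) admits a one-sided derivative at $0$, I get
\[
\frac{d}{d\e}\Big[(1-\e) g(\e) + \e F(\mu(B))\Big]\Big|_{\e=0^+} = -g(0) + g'(0^+) + F(\mu(B)) = -F(\mu(A)) + F(\mu(B)) + g'(0^+).
\]
By the chain rule $g'(0^+) = F'(\mu(A)) \cdot \tfrac{d}{d\e}\mu(\tfrac{1}{1-\e} \times_{p,\bar{\alpha}} A)\big|_{\e=0^+}$, and the latter derivative is, up to the expansion $\tfrac{1}{1-\e} = 1 + \e + \e^2 + \cdots$, exactly $S_{\mu,p,\bar{\alpha}}(A,A)$; I will then re-express $F'(\mu(A)) \cdot S_{\mu,p,\bar{\alpha}}(A,A)$ in terms of $\mathbb{M}_{p,F}^{\bar{\alpha},\mu}(A)$ using the defining formula \eqref{surfacearea} (noting the one-sided derivative there is taken at $\e = 1$, which corresponds to the same scaling direction). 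Meanwhile the left-hand side limit, multiplied by $F'(1)$, is precisely $\mathbb{V}_{p,F}^{\bar{\alpha},\mu}(A,B)$ by \eqref{mixedvolume}. Multiplying the whole inequality through by $F'(1)$ and rearranging yields the claimed bound
\[
\mathbb{V}_{p,F}^{\bar{\alpha},\mu}(A,B) + F'(1)\,\mathbb{M}_{p,F}^{\bar{\alpha},\mu}(A) \geq \frac{F'(1)\big[F(\mu(B)) - F(\mu(A))\big]}{F'(\mu(A))} + \mu(A),
\]
where the additive $\mu(A)$ and the $F'(1)\mathbb{M}$ term arise from bookkeeping of \eqref{mixedvolume}–\eqref{surfacearea}, together with the fact that $F\circ\mu$ being concave forces $F$ increasing so that all divisions by $F'$ preserve inequalities.

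For the equality characterization I would argue that equality propagates back through each step: equality in the limit forces, for all small $\e$, equality in the $F(t)$-concavity inequality applied to the pair $\tfrac{1}{1-\e}\times_{p,\bar{\alpha}} A$ and $B$; invoking the equality conditions already implicit in the $L_{p,\bar{\alpha}}$-curvilinear-Brunn-Minkowski framework (Theorem~\ref{t:lpUhrinBM} and its antecedents), this can only happen when $A$ and $B$ coincide, and conversely $A = B$ trivially gives equality since then both sides reduce to $\mathbb{V}_{p,F}^{\bar{\alpha},\mu}(A,A) + F'(1)\mathbb{M}_{p,F}^{\bar{\alpha},\mu}(A) = \mu(A)$. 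The main obstacle I anticipate is the careful justification that the one-sided derivative $g'(0^+)$ exists and equals $F'(\mu(A)) S_{\mu,p,\bar{\alpha}}(A,A)$, and the matching of sign conventions between the $\e \to 0^+$ derivative in $S_{\mu,p,\bar{\alpha}}$ and the left derivative at $\e = 1$ in $\mathbb{M}_{p,F}^{\bar{\alpha},\mu}$; Lemma~\ref{t:extragoodcoro} gives concavity of the relevant maps, which guarantees the one-sided derivatives exist, but reconciling the definitions \eqref{mixedvolume} and \eqref{surfacearea} with the $\liminf$ appearing in $S_{\mu,p,\bar{\alpha}}$ will require the monotonicity (hence a.e.\ differentiability) supplied by that concavity. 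Handling the extremal powers $\alpha_i \in \{0,\infty\}$ will, as in Theorem~\ref{t:functionalMink1st}, be routine once the generic case is settled.
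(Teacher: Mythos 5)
Your route re-derives Theorem~\ref{t:functionalMink1st} (via the identity $A +_{p,\bar{\alpha}} \e \times_{p,\bar{\alpha}} B = (1-\e)\times_{p,\bar{\alpha}}(\tfrac{1}{1-\e}\times_{p,\bar{\alpha}}A) \oplus_{p,\bar{\alpha}} \e\times_{p,\bar{\alpha}}B$), which gives
\[
S_{\mu,p,\bar{\alpha}}(A,B) \;\geq\; S_{\mu,p,\bar{\alpha}}(A,A) + \frac{F(\mu(B))-F(\mu(A))}{F'(\mu(A))},
\]
but the passage from this to the stated theorem has a genuine gap, and it is exactly at the point you flag as "matching of sign conventions." Your $g'(0^+)$ computes the \emph{right} derivative at scale $1$ of $\e \mapsto \mu(\e\times_{p,\bar{\alpha}}A)$ (since $\tfrac{1}{1-\e}\to 1^+$), i.e.\ $S_{\mu,p,\bar{\alpha}}(A,A)$, whereas the definition \eqref{surfacearea} of $\mathbb{M}_{p,F}^{\bar{\alpha},\mu}(A)$ involves the \emph{left} derivative $\frac{d}{d\e}^{-}\big\rvert_{\e=1}\mu(\e\times_{p,\bar{\alpha}}A)$. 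For the concave map furnished by Lemma~\ref{t:extragoodcoro} one only gets the one-sided comparison (left derivative) $\geq$ (right derivative), i.e.
\[
\frac{\mu(A)}{F'(1)} - \mathbb{M}_{p,F}^{\bar{\alpha},\mu}(A) \;\geq\; S_{\mu,p,\bar{\alpha}}(A,A),
\]
which goes the wrong way: your inequality bounds $\mathbb{V}_{p,F}^{\bar{\alpha},\mu}(A,B)$ from below by the \emph{smaller} quantity $F'(1)S_{\mu,p,\bar{\alpha}}(A,A)+\cdots$, so it is weaker than the claimed bound with $\mu(A)-F'(1)\mathbb{M}_{p,F}^{\bar{\alpha},\mu}(A)+\cdots$. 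Appealing to a.e.\ differentiability of a concave function does not close this, because the corner could sit precisely at $\e=1$; nothing in the hypotheses rules that out.

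The paper avoids the issue by a different parametrization: it sets $f(t) = F(\mu((1-t)\times_{p,\bar{\alpha}}A +_{p,\bar{\alpha}} t\times_{p,\bar{\alpha}}B)) - [(1-t)F(\mu(A)) + tF(\mu(B))]$, notes $f$ is concave with $f(0)=f(1)=0$, hence $\frac{d^+}{dt}\big\rvert_{t=0}f \geq 0$, and computes this derivative through $g(r,s) = \mu\big(r\times_{p,\bar{\alpha}}(A+_{p,\bar{\alpha}} s\times_{p,\bar{\alpha}}B)\big)$ evaluated along $r=1-t$, $s=\tfrac{t}{1-t}$. Because $r=1-t\to 1^-$, the chain rule produces exactly $-\frac{d^-}{dt}\big\rvert_{t=1}\mu(t\times_{p,\bar{\alpha}}A)$, i.e.\ the left derivative appearing in \eqref{surfacearea}, plus $\frac{d^+}{dt}\big\rvert_{t=0}\mu(A+_{p,\bar{\alpha}}t\times_{p,\bar{\alpha}}B) = \tfrac{1}{F'(1)}\mathbb{V}_{p,F}^{\bar{\alpha},\mu}(A,B)$ from \eqref{mixedvolume}, and the statement follows directly. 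To repair your argument you would either have to switch to this decreasing-scaling path, or add a hypothesis guaranteeing differentiability of $\e\mapsto\mu(\e\times_{p,\bar{\alpha}}A)$ at $\e=1$. (Your equality discussion via Theorem~\ref{t:lpUhrinBM} is also not what the paper does; equality there is tied to the chord function $f$ vanishing identically, not to equality cases of the curvilinear Brunn--Minkowski inequality.)
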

  	
  	\begin{proof} 

  Denote the function $f \colon [0,1] \to \R_+$  as
  		\[
  		f(t) = F(\mu((1-t) \times_{p,\bar{\alpha}} A +_{p,\bar{\alpha}} t \times_{p,\bar{\alpha}} B))-\left[(1-t) F(\mu(A)) + t F(\mu(B))\right].
  		\]
  		As $F\circ\mu$ is concave with respect the $L_{p,\bar{\alpha}}$-curvilinear summation for bounded Borel sets, the functions $f$ is concave  and such that $f(0) = f(1) =0,$ and the right-derivative of $f$ at $t=0$ exists satisfying
  		\[
  		\frac{d^+}{dt}\biggr\rvert_{t=0}f(t) \geq 0, 
  		\]
  		with equality if and only if $f(t) = 0$ for all $t \in [0,1]$.
  		
We only need to compute the right derivative at $0$ of $\mu((1-t)\times_{p,\bar{\alpha}} A +_{p,\bar{\alpha}} t \times_{p,\bar{\alpha}} B)$ since
  		\[
  		\frac{d^+}{dt}\biggr\rvert_{t=0}f(t) =F'(\mu(A))\cdot \frac{d^+}{dt}\biggr\rvert_{t=0}\mu((1-t) \times_{p,\bar{\alpha}}A + t \times_{p,\bar{\alpha}} B) + F(\mu(A))-F(\mu(B)).
  		\]
  	
  		 Setting $g(r,s)=\mu(r \times_{p,\bar{\alpha}}(A+_{p,\bar{\alpha}} s \times_{p,\bar{\alpha}} B))$, we obtain
  		\begin{align*}
  		\frac{d^+}{dt}\biggr\rvert_{t=0}\mu((1-t) \times_{p,\bar{\alpha}}A + t \times_{p,\bar{\alpha}} B) &= \frac{d^+}{dt}\biggr\rvert_{t=0} g\left(1-t, \frac{t}{1-t}\right)\\
  		&=-\frac{d^-}{dt}\biggr\rvert_{t=1}\mu(t \times_{p,\bar{\alpha}} A) + \frac{d^+}{dt}\biggr\rvert_{t=0}\mu(A+_{p,\bar{\alpha}} t \times_{p,\bar{\alpha}} B)\\
  		&= \mathbb{M}_{p,F}^{\bar{\alpha}, \mu}(A) - \frac{1}{F'(1)}\mu(A) + \frac{1}{F'(1)}\mathbb{V}_{p,F}^{\bar{\alpha}, \mu}(A,B)
  		\end{align*}
  		and then
  		\[
  		\frac{d^+}{dt}\biggr\rvert_{t=0}f(t) = F'(\mu(A))\left[\mathbb{M}_{p,F}^{\bar{\alpha}, \mu}(A)- \frac{1}{F'(1)}\mu(A) + \frac{1}{F'(1)}V_{p}^{\mu}(A,B)\right]+ F(\mu(A))-F(\mu(B)).
  		\]
It follows from the fact  $\frac{d^+}{dt}|_{t=0}f(t) \geq 0$ and the equality case of $\frac{d^+}{dt}\rvert_{t=0}f(t) = 0$ the desired inequality holds true.
  	\end{proof}

  	\subsection{$L_{p, \bar{\alpha}}$ supremal-convolution and corresponding inequalities for functions}\label{subsection52}
  		Based on the condition for $L_{p,\bar{\alpha}}$ Borell-Brascamp-Lieb inequality (\ref{multipleconvolution}), similar to the definition of $L_{p,s}$ supremal-convolution (\ref{e:newsupcolvolution}) originated from the classic Borell-Brascamp-Lieb inequality, we define a generalized version of supremal-convolution for multiple power parameters in $L_p$ case below.
  	\begin{defi}
  		Let $p \geq 1$, $p^{-1} + q^{-1} = 1$, $t \in (0,1)$, and  $\bar{\alpha}=(\alpha_1,\dots, \alpha_{n+1})$ with  $\alpha_i \in [-\infty,\infty]$ for all $i =1,\dots,n+1$. Suppose that $f,g\colon (\R_+)^n \to \R_+$ are a triple of bounded integrable functions having bounded support. We define the $L_{p, \bar{\alpha}}$ supremal-convolution of $f$ and $g$ as 
  		\begin{align*}
  	&\!\!\!\!\!\!\!\!\!\!	((1-t)\times_{p,\bar{\alpha}}f+_{p,\bar{\alpha}}t\times_{p,\bar{\alpha}}g)
  		(z_1,\cdots, z_n)\\	
  		&=\sup_{0<\lambda<1}\left( \sup_{z_i=M_{p,\alpha_i}^{(t,\lambda)}(x_i,y_i), 1\leq i\leq n} M_{p,\alpha_{n+1}}^{(t,\lambda)}(f(x_1,\cdots, x_n), g(y_1,\cdots, y_n))\right).
  		\end{align*}
  	\end{defi}
  	
  	\noindent If $\alpha_i=1$ for $1\leq i\leq n$ and $\alpha_{n+1}=\frac{1}{s}$, it recovers the definition of $L_{p,\bar{\alpha}}$-supremal-convolution in \cite{RX}.
  	
  	By the definitions of $L_{p,\bar{\alpha}}$-supremal-convolution, we conclude that with the same conditions for $\bar{\alpha}$, $\gamma$, $f$ and $g$ in Theorem \ref{t:UhrinPLlp},  one has the following Brunn-Minkowski type inequality
  	\begin{align*}
  	&	\int_{\R^n}\big((1-t)\times_{p,\bar{\alpha}}f\oplus_{p,\bar{\alpha}}t\times_{p,\bar{\alpha}}g\big)(x)dx\\
  	&\geq \begin{cases}
  	M_{p\gamma}^t\left(\int_{(\R_+)^n}f(x) dx,\int_{(\R_+)^n}g(x) dx \right) ,
  	&\text{if }\alpha_{n+1} \geq -\left(\sum_{i=1}^n \alpha_i^{-1}\right)^{-1},\\
  	\sup_{0< \lambda < 1} \min\left\{\left[C_{p,\lambda,t}\right]^{\frac{1}{\gamma}}\int_{(\R_+)^n}f(x) dx,\left[D_{p,\lambda,t}\right]^{\frac{1}{\gamma}} \int_{(\R_+)^n}g(x) dx\right\},
  	& \text{if } \alpha_{n+1} < -\left(\sum_{i=1}^n \alpha_i^{-1}\right)^{-1}.
  	\end{cases}
  	\end{align*}  	
  	
  	Next, similar to the definition of surface area defined in above subsection in terms of $L_{p, \bar{\alpha}}$-curvilinear summation for sets with respect to measures, we will give the surface area for functions with respect to the $L_{p,\bar{\alpha}}$--supremal convolution for functions in this section accordingly, which is also a natural multiple generalization in \cite{RX}.

We also have a $F$-concavity in terms of the integral of $L_{p,\bar{\alpha}}$--supremal convolution for measures similar to Definition \ref{definitionoffconcavity}.
  	
  	\begin{defi}\label{definitionoffconcavity1} Let $p \in [1,\infty)$, $\bar{\alpha}=(\alpha_1,\cdots,\alpha_{n+1})\in [0,\infty]^{n+1}$, and $F \colon \R_+ \to \R$ be an invertible differentiable function.  We say that a measure $\mu$ on $\R^n$ is $F(t)$-concave with respect to the $L_{p,\bar{\alpha}}$--supremal convolution of functions belonging to some class $\mathcal{A}$ of bounded non-negative $\mu$-integrable functions if, for any members $f,g$ belonging to $\mathcal{A}$ and any $t \in [0,1]$, one has that 
  		\begin{equation*}\label{e:functionalMink1stassumption}
  		\int_{\R^n} [((1-t) \times_{p,\bar{\alpha}} f)\oplus_{p,\bar{\alpha}} (t \times_{p,\bar{\alpha}} g)] d\mu \geq F^{-1}\left((1-t)F\left(\int_{\R^n} f d\mu\right) + t F\left(\int_{\R^n} g d\mu\right)\right).
  		\end{equation*}
  	\end{defi}
  	
 Further we pose the following definition of the surface area extension for $L_{p,\bar{\alpha}}$--supremal convolution with measures.
  	
  	\begin{defi} Let $\mu$ be a Borel measure on $\R^n$, $p \geq 1$, and $\bar{\alpha}=(\alpha_1,\cdots,\alpha_{n+1})\in [0,\infty]^{n+1}$. We define the $L_p$-$\mu$-surface area of a $\mu$-integrable function $f \colon \R^n \to \R_+$ with respect to a $\mu$-integrable function $g$ by 
  		\[
  		\mathbb{S}_{\mu,p,\bar{\alpha}}(f,g) := \liminf_{\e \to 0^+} \frac{\int_{\R^n} f \oplus_{p,\bar{\alpha}} (\e \times_{p,\bar{\alpha}} g) d\mu -\int_{\R^n}f d\mu }{\e}.
  		\]
  		If $\mu$ is the Lebesgue measure on $\R^n$, we will simply denote it as $\mathbb{S}_{p,\bar{\alpha}}$.
  	\end{defi}
The  $L_p$-Minkowski's first inequality below for $L_{p,\bar{\alpha}}$ supremal-convolution, which is  generalization of $L_p$-Minkowski's first inequality in terms of $L_{p,s}$ supremal-convolution for functions in \cite{RX}.
  	
  	\begin{theorem}\label{t:functionalMink1st23} Let $p \in[1,\infty)$, $\bar{\alpha}=(\alpha_1,\cdots,\alpha_{n+1})\in [0,\infty]^{n+1}$, and $F \colon \R_+\to \R$ be a differentiable invertible function. Let $\mu$ be a Borel measure on $\R^n$, and assume that $\mu$ is $F(t)$-concave with respect to some class, $\mathcal{A}$, of non-negative bounded $\mu$-integrable functions and the $L_{p,\bar{\alpha}}$--supremal convolution.
  		Then the following inequality holds for any members $f,g$ of the class $\mathcal{A}$: 
  		\begin{equation*}\label{e:functionalMink1stconclusion}
  		\mathbb{S}_{\mu,p,\bar{\alpha}}(f,g) \geq \mathbb{S}_{\mu,p,\bar{\alpha}}(f,f) + \frac{F\left(\int_{\R^n}g d\mu\right) - F\left(\int_{\R^n} f d\mu\right)}{F'\left(\int_{\R^n} f d\mu \right)}.
  		\end{equation*}
  		In particular, when $\int_{\R^n} f d\mu = \int_{\R^n} g d\mu$, we obtain the following isoperimetric type inequality: 
  		\[
  		\mathbb{S}_{\mu,p,\bar{\alpha}}(f,g) \geq \mathbb{S}_{\mu,p,\bar{\alpha}}(f,f). 
  		\]
  	\end{theorem}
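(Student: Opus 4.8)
The plan is to mimic exactly the argument of Theorem~\ref{t:functionalMink1st}, replacing sets by functions, the measure of a set by the $\mu$-integral of a function, and $L_{p,\bar{\alpha}}$-curvilinear summation of sets by $L_{p,\bar{\alpha}}$-supremal-convolution of functions. First I would record the scaling identity that makes everything work: for $\e \in (0,1)$ one has
\[
f \oplus_{p,\bar{\alpha}} (\e \times_{p,\bar{\alpha}} g) = \left[(1-\e) \times_{p,\bar{\alpha}} \left(\tfrac{1}{1-\e} \times_{p,\bar{\alpha}} f\right)\right] \oplus_{p,\bar{\alpha}} (\e \times_{p,\bar{\alpha}} g),
\]
which follows directly from the definition of $L_{p,\bar{\alpha}}$-supremal-convolution (the coefficients $C_{p,\lambda,t},D_{p,\lambda,t}$ absorb the scalar factors through the $M_{p,\alpha_i}^{(t,\lambda)}$ operations). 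Applying the $F(t)$-concavity hypothesis from Definition~\ref{definitionoffconcavity1} to the right-hand side, with the two ``functions'' $\tfrac{1}{1-\e}\times_{p,\bar{\alpha}}f$ and $g$ and parameter $\e$, gives
\[
\int_{\R^n} f \oplus_{p,\bar{\alpha}} (\e \times_{p,\bar{\alpha}} g)\, d\mu \geq F^{-1}\!\left[(1-\e) F\!\left(\int_{\R^n} \tfrac{1}{1-\e} \times_{p,\bar{\alpha}} f\, d\mu\right) + \e F\!\left(\int_{\R^n} g\, d\mu\right)\right].
\]

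Next I would introduce, in parallel with $Q_{F,\mu,s,p}$ from the proof of Theorem~\ref{t:functionalMink1st}, the auxiliary function
\[
\mathcal{Q}(\e) := F^{-1}\!\left[(1-\e) F\!\left(\int_{\R^n} \tfrac{1}{1-\e} \times_{p,\bar{\alpha}} f\, d\mu\right) + \e F\!\left(\int_{\R^n} g\, d\mu\right)\right],
\]
observe $\mathcal{Q}(0) = \int_{\R^n} f\, d\mu$, and conclude from the displayed inequality and the definition of $\mathbb{S}_{\mu,p,\bar{\alpha}}$ that $\mathbb{S}_{\mu,p,\bar{\alpha}}(f,g) \geq \mathcal{Q}'(0)$ (lower right derivative). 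Then I would compute $\mathcal{Q}'(0)$ by the chain rule, using the key sub-identity
\[
\frac{d}{d\e}\left[\int_{\R^n} \tfrac{1}{1-\e} \times_{p,\bar{\alpha}} f\, d\mu\right]\biggr\rvert_{\e=0} = \lim_{\e\to 0^+} \frac{\int_{\R^n} (1+\e+\e^2+\cdots)\times_{p,\bar{\alpha}} f\, d\mu - \int_{\R^n} f\, d\mu}{\e} = \mathbb{S}_{\mu,p,\bar{\alpha}}(f,f),
\]
exactly as in Theorem~\ref{t:functionalMink1st}. Differentiating $\mathcal{Q}$ at $\e = 0$ then yields
\[
\mathcal{Q}'(0) = \frac{S_{\mu,p,\bar{\alpha}}(f,f)\, F'\!\left(\int_{\R^n} f\, d\mu\right) - F\!\left(\int_{\R^n} f\, d\mu\right) + F\!\left(\int_{\R^n} g\, d\mu\right)}{F'\!\left(\int_{\R^n} f\, d\mu\right)} = \mathbb{S}_{\mu,p,\bar{\alpha}}(f,f) + \frac{F\!\left(\int_{\R^n} g\, d\mu\right) - F\!\left(\int_{\R^n} f\, d\mu\right)}{F'\!\left(\int_{\R^n} f\, d\mu\right)},
\]
which is the claimed inequality. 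The isoperimetric corollary is then immediate: when $\int_{\R^n} f\, d\mu = \int_{\R^n} g\, d\mu$ the fraction vanishes. As in the earlier proof, I would first treat the generic case $\alpha_i \neq 0, +\infty$ and remark that the cases $\alpha_i = 0$ or $+\infty$ follow by the same scheme with the averages $M_{p,\alpha_i}^{(t,\lambda)}$ interpreted via continuity.

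The main obstacle I anticipate is justifying the differentiation steps at $\e = 0$: one must check that $\e \mapsto \int_{\R^n} \tfrac{1}{1-\e}\times_{p,\bar{\alpha}}f\, d\mu$ is actually differentiable from the right at $0$ with the asserted value (so that the $\liminf$ defining $\mathbb{S}_{\mu,p,\bar{\alpha}}(f,f)$ coincides with an honest derivative along this curve), and that $F^{-1}$ and $F$ may be differentiated through — this requires $F' (\int_{\R^n} f\, d\mu) \neq 0$, which is part of ``invertible differentiable,'' and requires the $\liminf$s to behave well under the monotone reparametrization $\e \mapsto \e/(1-\e)$. Since Theorem~\ref{t:functionalMink1st} handles precisely these points for the set version, the cleanest route is to invoke that proof verbatim with the dictionary set $\leftrightarrow$ function, $\mu(\cdot) \leftrightarrow \int_{\R^n}(\cdot)\, d\mu$, $+_{p,\bar{\alpha}} \leftrightarrow \oplus_{p,\bar{\alpha}}$, and cite it rather than re-deriving the delicate limit manipulations.
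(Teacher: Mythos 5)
Your proposal is correct and is exactly the argument the paper intends: the paper states Theorem~\ref{t:functionalMink1st23} without writing a proof, leaving implicit that one repeats the proof of Theorem~\ref{t:functionalMink1st} under the dictionary $\mu(\cdot)\leftrightarrow\int_{\R^n}(\cdot)\,d\mu$, $+_{p,\bar{\alpha}}\leftrightarrow\oplus_{p,\bar{\alpha}}$, which is precisely what you carry out (scaling identity, $F$-concavity, auxiliary function $\mathcal{Q}$, chain rule at $\e=0$). The only caveat, present equally in the paper's implicit argument, is that applying Definition~\ref{definitionoffconcavity1} to the dilate $\tfrac{1}{1-\e}\times_{p,\bar{\alpha}}f$ tacitly assumes the class $\mathcal{A}$ is stable under such dilations.
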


  	In particular, we obtain a natural  corollary of Theorem~\ref{t:functionalMink1st} for Lebesgue measure $\mu$.
  	
  	\begin{coro} 
   Let $p \in [1,\infty)$, $\bar{\alpha}=(\alpha_1,\cdots,\alpha_{n+1})\in (0,1]^{n+1}$. Then, for any bounded integrable functions $f,g \colon \R^n \to \R_+$, one has 
  		\[
  		\mathbb{S}_{p,\bar{\alpha}}(f,g) \geq \mathbb{S}_{p,\bar{\alpha}}(f,f) +\frac{\left(\int_{\R^n}g(x)dx \right)^{p\gamma}-\left(\int_{\R^n}f(x)dx \right)^{p\gamma}}{p\gamma\left(\int_{\R^n}f(x)dx\right)^{p\gamma-1}},
  		\]
  			where $\gamma =\left(\sum_{i=1}^{n+1}\alpha_i^{-1}\right)^{-1}.$
  		In particular, when $\int_{\R^n} f(x) dx = \int_{\R^n} g(x) dx>0$, we obtain the following isoperimetric type inequality:
  		\[
  		\mathbb{S}_{p,\bar{\alpha}}(f,g) \geq \mathbb{S}_{p,\bar{\alpha}}(f,f).
  		\]
  	
  	\end{coro}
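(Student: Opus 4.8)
The plan is to obtain the corollary as the special case of Theorem~\ref{t:functionalMink1st23} in which $\mu = V_n$ is Lebesgue measure on $\R^n$ and the rate function is $F(\tau) = \tau^{p\gamma}$, $\tau \in \R_+$, with $\gamma = \left(\sum_{i=1}^{n+1}\alpha_i^{-1}\right)^{-1}$. Since every $\alpha_i$ lies in $(0,1]$, the exponent $p\gamma$ is a strictly positive real number, so $F$ is a differentiable, strictly increasing bijection of $\R_+$ onto itself, with $F^{-1}(\sigma) = \sigma^{1/(p\gamma)}$ and $F'(\tau) = p\gamma\,\tau^{\,p\gamma-1}$; thus $F$ fulfils the hypotheses placed on the rate function in Theorem~\ref{t:functionalMink1st23}. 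Throughout we may assume $\int_{\R^n} f\,dx > 0$, so that $F'\!\left(\int_{\R^n} f\,dx\right)$ is finite and nonzero (otherwise the statement is degenerate).

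The main point is to check that $V_n$ is $F(t)$-concave with respect to the $L_{p,\bar{\alpha}}$-supremal convolution, in the sense of Definition~\ref{definitionoffconcavity1}, on the class $\mathcal{A}$ of bounded non-negative integrable functions having bounded support. Set $h := (1-t)\times_{p,\bar{\alpha}} f \oplus_{p,\bar{\alpha}} t\times_{p,\bar{\alpha}} g$. Directly from the definition of the $L_{p,\bar{\alpha}}$-supremal convolution, the triple $f,g,h$ satisfies the pointwise hypothesis~\eqref{multipleconvolution} of the $L_{p,\bar{\alpha}}$-Borell--Brascamp--Lieb inequality: if $z_i = M_{p,\alpha_i}^{(t,\lambda)}(x_i,y_i)$ for $1\le i\le n$, then taking the supremum over $\lambda$ in the definition of $h$ gives $h(z) \ge M_{p,\alpha_{n+1}}^{(t,\lambda)}(f(x),g(y))$. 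Because $\alpha_i\in(0,1]\subseteq[0,1]$ for $i = 1,\dots,n$ and $\alpha_{n+1} > 0 > -\left(\sum_{i=1}^n\alpha_i^{-1}\right)^{-1}$, Theorem~\ref{t:UhrinPLlp} applies in its first branch (equivalently, this is the Brunn--Minkowski inequality for the $L_{p,\bar{\alpha}}$-supremal convolution displayed earlier in this subsection), giving
\[
\int_{\R^n} h\,dx \ \ge\ M_{p\gamma}^t\!\left(\int_{\R^n} f\,dx,\ \int_{\R^n} g\,dx\right) \ =\ F^{-1}\!\left((1-t)\,F\!\left(\int_{\R^n} f\,dx\right) + t\,F\!\left(\int_{\R^n} g\,dx\right)\right),
\]
which is exactly the $F(t)$-concavity required by Definition~\ref{definitionoffconcavity1}.

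With the concavity hypothesis verified, I would apply Theorem~\ref{t:functionalMink1st23} to $\mu = V_n$ and this $F$, obtaining
\[
\mathbb{S}_{p,\bar{\alpha}}(f,g) \ \ge\ \mathbb{S}_{p,\bar{\alpha}}(f,f) + \frac{F\!\left(\int_{\R^n} g\,dx\right) - F\!\left(\int_{\R^n} f\,dx\right)}{F'\!\left(\int_{\R^n} f\,dx\right)} \ =\ \mathbb{S}_{p,\bar{\alpha}}(f,f) + \frac{\left(\int_{\R^n} g\,dx\right)^{p\gamma} - \left(\int_{\R^n} f\,dx\right)^{p\gamma}}{p\gamma\left(\int_{\R^n} f\,dx\right)^{p\gamma-1}},
\]
which is the asserted inequality for $f,g\in\mathcal{A}$; the isoperimetric consequence is the special case $\int_{\R^n} f\,dx = \int_{\R^n} g\,dx$, in which the last fraction vanishes. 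To reach arbitrary bounded integrable $f,g$, I would invoke the same density/truncation device already used for the curvilinear Brunn--Minkowski inequalities (cf. the closing lines of the proofs of Theorems~\ref{t:lpcurvfulld} and~\ref{compressionvolume}): approximate $f,g$ from below by their restrictions to large balls, pass to the limit using monotone convergence for the integrals, and keep track of the $\liminf$ in the definition of $\mathbb{S}_{p,\bar{\alpha}}$.

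The only genuinely delicate point I anticipate is this last limiting step — controlling the $\liminf$ defining $\mathbb{S}_{p,\bar{\alpha}}(f,g)$ and $\mathbb{S}_{p,\bar{\alpha}}(f,f)$ under the truncation, together with confirming that the class $\mathcal{A}$ in Theorem~\ref{t:functionalMink1st23} may be taken to be the bounded-support functions and then enlarged; everything else is a routine substitution of $F(\tau) = \tau^{p\gamma}$ into the conclusion of Theorem~\ref{t:functionalMink1st23}.
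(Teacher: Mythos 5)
Your proposal is correct and follows essentially the route the paper intends: the corollary is obtained from Theorem~\ref{t:functionalMink1st23} with $\mu$ Lebesgue measure and $F(\tau)=\tau^{p\gamma}$, the required $F(t)$-concavity being exactly the Brunn--Minkowski type inequality for the $L_{p,\bar{\alpha}}$-supremal convolution that the paper derives from Theorem~\ref{t:UhrinPLlp}. Your extra care about truncating to bounded supports and tracking the $\liminf$ is a reasonable refinement that the paper itself glosses over, but it does not change the substance of the argument.
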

  	\begin{remark}
  		The definition of $F\circ \mu$ concavity with respect to the $L_{p,\bar{\alpha}}$-supermal-convolution can also be defined accordingly with similar variation formulas exist for functions in (\ref{mixedvolume}) and has similar inequalities in Theorem \ref{t:lpisoperimetricinequalitygeneral}, and thus omitted.
  		\end{remark}

  	\section*{Acknowledgements}
  	This research was partially supported by the Zuckerman STEM Leadership Program.

  \end{document}